\newtheorem{thm}{Theorem}[section]
\newtheorem{lem}[thm]{Lemma}
\newtheorem{prop}[thm]{Proposition}
\newtheorem{defn}[thm]{Definition}
\newtheorem{conj}{Conjecture}
\newcommand{\enuma}[1]{\begin{enumerate}[\textup{(}a\textup{)}] {#1} \end{enumerate}}
\newcommand{\mb}{\mathbf}
\newcommand{\mc}{\mathcal}
\newcommand{\mf}{\mathfrak}
\newcommand{\isom}{\xrightarrow{\;\sim\;}}
\newcommand{\mosi}{\xleftarrow{\;\sim\;}}
\newcommand{\N}{\mathbb N}
\newcommand{\Z}{\mathbb Z}
\newcommand{\Q}{\mathbb Q}
\newcommand{\R}{\mathbb R}
\newcommand{\C}{\mathbb C}
\newcommand{\matje}[4]{\left(\begin{smallmatrix} #1 & #2 \\ 
#3 & #4 \end{smallmatrix}\right)}
\newcommand{\q}{/\!/}
\def\Hom{{\rm Hom}}
\def\End{{\rm End}}
\def\Irr{{\rm Irr}}
\def\Gal{{\rm Gal}}
\def\GL{{\rm GL}}
\def\PGL{{\rm PGL}}
\def\SL{{\rm SL}}
\def\cG{{\mathcal G}}
\def\cS{{\mathcal S}}
\def\cT{{\mathcal T}}
\def\cH{{\mathcal H}}
\def\cB{{\mathcal B}}
\def\cR{{\mathfrak R}}
\def\cZ{{\mathcal Z}}
\def\Fr{{\rm Frob}}
\def\ab{{\rm ab}}
\def\Ind{{\rm Ind}}
\def\ind{{\rm ind}}
\def\nr{{\rm nr}}
\def\unr{{\rm unr}}
\def\unr{{\rm unr}}
\def\rU{{\rm U}}
\def\fs{{\mathfrak s}}
\def\Rep{{\rm Rep}}
\def\Res{{\rm Res}}
\def\der{{\rm der}}
\def\ad{{\rm ad}}
\def\sc{{\rm sc}}
\def\Aut{{\rm Aut}}
\def\temp{{\rm temp}}
\def\cpt{{\rm cpt}}
\def\cusp{{\rm cusp}}
\def\sep{{\rm sep}}
\def\bdd{{\rm bdd}}
\def\fB{{\mathfrak B}}
\def\Omega{{\fB}}
\begin{document}

\title{Conjectures about $p$-adic groups\\ and their noncommutative geometry}

\author[A.-M. Aubert]{Anne-Marie Aubert}
\address{Institut de Math\'ematiques de Jussieu -- Paris Rive Gauche, 
U.M.R. 7586 du C.N.R.S., U.P.M.C., 4 place Jussieu 75005 Paris, France}
\email{anne-marie.aubert@imj-prg.fr}
\author[P. Baum]{Paul Baum}
\address{Mathematics Department, Pennsylvania State University, University Park, PA 16802, USA}
\email{pxb6@psu.edu}
\author[R. Plymen]{Roger Plymen}
\address{School of Mathematics, Southampton University, Southampton SO17 1BJ,  England 
\emph{and} School of Mathematics, Manchester University, Manchester M13 9PL, England}
\email{r.j.plymen@soton.ac.uk \quad plymen@manchester.ac.uk}
\author[M. Solleveld]{Maarten Solleveld}
\address{IMAPP, Radboud Universiteit Nijmegen, Heyendaalseweg 135, 
6525AJ Nijmegen, the Netherlands}
\email{m.solleveld@science.ru.nl}
\date{\today}
\subjclass[2010]{20G25, 22E50, 11S37, 19L47}
\maketitle

\begin{abstract}
Let $G$ be any reductive $p$-adic group. We discuss several conjectures, some of them new, 
that involve the representation theory and the geometry of $G$.  

At the heart of these conjectures are statements about the geometric structure of Bernstein 
components for $G$, both at the level of the space of irreducible representations and at the 
level of the associated Hecke algebras. 
We relate this to two well-known conjectures: the local Langlands correspondence and the
Baum--Connes conjecture for $G$. In particular, we present a strategy to reduce the
local Langlands correspondence for irreducible $G$-representations to the local Langlands 
correspondence for supercuspidal representations of Levi subgroups.
\end{abstract}

\vspace{4mm}

\tableofcontents

\section*{Introduction}

This survey paper arose from talks that the first and fourth author gave at the
conference ``Around Langlands correspondences'' in Orsay in June 2015. 
We discuss the representation theory of reductive $p$-adic groups from two different
viewpoints: the Langlands program and noncommutative geometry.
We do this with the aid of several conjectures.

In the first part we formulate a version of the (conjectural) local Langlands
correspondence which is tailored for our purposes. In part 2 we explain what has
become known as the ABPS conjecture. We phrase the most general version, for 
any reductive group over a local nonarchimedean field, not necessarily split.
One of the foundations of this conjecture is the structure of the Hecke algebras
associated to Bernstein components. Based on many known cases we describe in
Conjecture \ref{conj:3} what these algebras should look like in general, up to
Morita equivalence. 

Part 3 focuses on the Galois side of the local Langlands correspondence (LLC).
We conjecture that the space of enhanced L-parameters is in bijection with a
certain union of extended quotients, analogous to the ABPS conjecture. This and
Conjecture \ref{conj:3} have not appeared in print before. Together these conjectures
provide a strategy to reduce the construction of a LLC for a reductive $p$-adic
group to that for supercuspidal representations of its Levi subgroups.

The final part of the paper is purely noncommutative geometric. We discuss the ABPS
conjecture for the topological K-theory of the reduced $C^*$-algebra of a reductive
$p$-adic group. We show that it forms a bridge between the Baum--Connes conjecture
and the LLC.\\

\noindent \textbf{Acknowledgment.}
We thank the referee for several helpful comments.

\section{The local Langlands correspondence}

We briefly discuss the history of the local Langlands correspondence (LLC). With a 
sequence of examplary groups we will reach more and more refined versions of the LLC. 
We will use these examples to explain exactly what kind of L-parameters we want to 
use, and we conjecture a bijective version of this correspondence. 

The (local) Langlands program originated from two sources:
\begin{itemize}
\item (local) class field theory;
\item representation theory of real reductive groups, in particular the work of 
Harish--Chandra on the discrete series.
\end{itemize}
Already in his 1973 preprint \cite{Lan1} Langlands established his correspondence
for real reductive groups: he managed to canonically associate an L-parameter to
every (admissible, smooth) irreducible representation of such a group.

In this paper we focus entirely on the non-archimedean case, so let $F$ be either
a $p$-adic field or a local function field. We fix a separable closure $F_\sep$ and 
we let $\mb W_F \subset \Gal (F_\sep / F)$ be the Weil group of $F$.

\subsection{Tori} \

Let $\mb W_F^{\ab} := \mb W_F / \overline{[\mb W_F,\mb W_F]}$ be the quotient of
$\mb W_F$ by closure of its commutator subgroup.
Recall that Artin reciprocity provides a natural isomorphism of topological groups
\begin{equation}\label{eq:1.1}
\mb a_F : F^\times \to \mb W_F^{\ab} . 
\end{equation}
Langlands had the beautiful idea to interpret this as a statement about $\GL_1 (F)$ 
which admits generalization to other reductive groups. Namely, let $\Irr (F^\times)$ 
be the collection of irreducible smooth complex representations of $F^\times$. 
Of course these are all characters, as $F^\times$ is commutative. Composition with 
\eqref{eq:1.1} gives a bijection
\begin{equation}\label{eq:1.2}
\Hom (\mb W_F,\C^\times) = \Hom (\mb W_F^{\ab},\C^\times) \isom \Irr (F^\times) . 
\end{equation}
(Here and below ``Hom'' means smooth homomorphisms of topological groups.)
More generally, suppose that $S = \cS (F)$ is a $F$-split torus. Let $X^* (S)$
(resp. $X_* (S)$) be the lattice of algebraic characters $\cS \to \GL_1$
(resp. algebraic cocharacters $\GL_1 \to \cS$). These two lattices are canonically
dual to each other and
\[
S \cong X_* (\cS) \otimes_\Z F^\times \cong (F^\times )^{\dim \cS} .
\]
Let $S^\vee := X^* (\cS) \otimes_\Z \C^\times$ be the complex dual torus of 
$S$, characterized by
\[
X^* (S^\vee) = X_* (\cS) ,\quad X_* (S^\vee) = X^* (\cS) .
\]
With Hom-tensor-duality \eqref{eq:1.2} generalizes to
\begin{multline}\label{eq:1.3}
\Irr (S) = \Irr (X_* (\cS) \otimes_\Z F^\times) = \\
\Hom (X_* (\cS) \otimes_\Z F^\times, \C^\times ) \cong
\Hom (F^\times , X^* (\cS) \otimes_\Z F^\times) = \\
\Hom (F^\times, S^\vee) \mosi \Hom (\mb W_F^{\ab},S^\vee) = \Hom (\mb W_F,S^\vee) .
\end{multline}
Motivated by \eqref{eq:1.3}, a Langlands parameter for $S$ is defined to be
a smooth group homomorphism $\mb W_F \to S^\vee$. The collection of such parameters
is denoted $\Phi (S)$, so we can rephrase \eqref{eq:1.3} as a natural bijection
\begin{equation}\label{eq:1.4}
\Irr (S) \to \Phi (S) . 
\end{equation}
Already in 1968 Langlands generalized this to non-split tori.
For example, let $E$ be a finite extension of $F$ contained in $F_\sep$ and let
$T = \Res_{E/F} (E^\times)$, that is, consider $E^\times$ as $F$-group. 
From \eqref{eq:1.4} we get a bijection $\Irr (T) \to \Hom (\mb W_E,\C^\times)$,
and it is desirable to reformulate to right hand side in terms of $\mb W_F$.
Recall that $\mb W_E$ is an open subgroup of $\mb W_F$ of index $[E:F]$. The complex
dual group of $T = \mc T (F)$ is
\[
\mc T^\vee (\C) = T^\vee = \ind_{\mb W_E}^{\mb W_F} (\C^\times) = 
\mathrm{Map}(\mb W_F / \mb W_E,\C^\times) .
\]
It is a complex torus of dimension $[E:F] = \dim_F (T)$ endowed with an action of
$\mb W_F$ via left multiplication on $\mb W_F / \mb W_E$. According to Shapiro's
lemma in continuous group cohomology
\begin{equation}\label{eq:1.5}
\Hom (\mb W_E,\C^\times) = H^1_c (\mb W_E,\C^\times) \cong 
H^1_c (\mb W_F,\ind_{\mb W_E}^{\mb W_F} (\C^\times) = H^1_c (\mb W_F,T^\vee) .
\end{equation}
Langlands \cite{Lan2} showed that the composition of \eqref{eq:1.3} and \eqref{eq:1.5}
is in fact true for every (non-split) torus $T = \mc T (F)$: the group $T^\vee$ is 
always endowed with a canonical action of $\mb W_F$, and there is a natural bijection
\begin{equation}\label{eq:1.6}
\Irr (T) \to H^1_c (\mb W_F , T^\vee) .
\end{equation}
In view of this $H^1_c (\mb W_F,T^\vee)$ is defined to be the space of Langlands
parameters $\Phi (T)$, and \eqref{eq:1.6} is known as the local Langlands correspondence
for tori. More explicitly, $\Phi (T)$ consists of continuous group homomorphisms 
\[
\phi : \mb W_F \to T^\vee \rtimes \mb W_F \quad 
\text{such that} \quad \phi (w) \in T^\vee w \;\; \forall w \in \mb W_F .
\]
Two such homomorphisms $\phi,\phi'$ are considered equal in $H^1_c (\mb W,T^\vee)$ 
if they are conjugate by an element of $T^\vee$, that is, if there is a $t \in T^\vee$ 
such that
\[
\phi' (w) = t \phi (w) t^{-1} \qquad \forall w \in \mb W_F . 
\]

\subsection{Quasi-split groups} \
\label{par:quasi-split}

The most fundamental case of the LLC is the group $\GL_n (F)$. According to Langlands'
original scheme an L-parameter for this group should be an $n$-dimensional representation
$\mb W_F \to \GL_n (\C)$. However, the Bernstein--Zelevinsky classification \cite{Zel}
has shown that not all irreducible representations are obtained in this way. Comparing
$l$-adic and complex representations of $\mb W_F$, Deligne \cite[\S 8]{Del} realized that 
$\mb W_F$ should be replaced by $\mb W_F \ltimes \C$ (now known as the Weil--Deligne 
group). Instead, we use the group $\mb W_F \times \SL_2 (\C)$ as a substitute of the 
Weil--Deligne group (which is possible, as explained in \cite[\S 8]{Kna} and 
\cite[Proposition 2.2]{GrRe}). Thus $\Phi (\GL_n (F))$ is defined as the set of 
isomorphism classes of $n$-dimensional continuous representations 
\[
\phi : \mb W_F \times \SL_2 (\C) \to \GL_n (\C) 
\]
such that $\phi \big|_{\SL_2 (\C)}$ is a homomorphism of algebraic groups.
It was proven in \cite{LRS} that for a local function field $F$ there is a canonical
bijection
\[
\mathrm{rec}_{n,F} : \Irr (\GL_n (F)) \to \Phi (\GL_n (F)) . 
\]
Later this result was also established when $F$ is a $p$-adic field 
\cite{HaTa,Hen,Scho}. We note that all these proofs make use of global methods and
of some very particular Shimura varieties, whose cohomology carries actions of
groups related to $\Gal (F_\sep / F)$ and $\GL_n (F)$. It has turned out to be very
hard to find varieties which play an analogous role for other reductive groups.

What all the above groups have in common, is that the LLC is a canonical 
bijection from $\Irr (G)$ to $\Phi (G)$. This is false for almost any other
group, for example, it already fails for $\SL_2 (F)$. More refinements are needed
to parametrize an L-packet (the set of representations that share the same
L-parameter). We will introduce possible such refinements below.

But first we have to define precisely what we mean by a Langlands parameter for a
general reductive $F$-group $G$. Let $G^\vee = \cG^\vee (\C)$ be the complex dual
group, as in \cite[\S 2]{Bor}. It is endowed with an action of $\Gal (F_\sep / F)$,
in a way which is canonical up to inner automorphisms of $G^\vee$. The group
$G^\vee \rtimes \mb W_F$ is called (the Weil form of) the Langlands dual group 
${}^L G$. Its definition is canonical up to isomorphism. 

From Artin reciprocity we see that Langlands parameters must involve
smooth homomorphisms from the Weil group of $F$, and from the case of split
tori we observe that the target must contain the complex dual group of $G$. 
In fact, the case of non-split tori forces us to take $G^\vee \rtimes 
\mb W_F$ as target and to consider $G^\vee$-conjugacy classes of homomorphisms.
Finally, the case $\GL_n (F)$ shows that we should use $\mb W_F \times \SL_2 (\C)$
as the source of our homomorphisms. 
Through such considerations Borel \cite[\S 8.2]{Bor} arrived at the following notion.

\begin{defn}
A Langlands parameter (or L-parameter for short) $\phi$ for $G$ is smooth group homomorphism
\[
\phi \colon \mb W_F \times \SL_2 (\C) \to G^\vee \rtimes \mb W_F 
\quad \text{ such that:} 
\]
\begin{itemize}
\item $\phi$ preserves the canonical projections to $\mb W_F$, that is,
$\phi (w,x) \in G^\vee w$ for all $w \in \mb W_F$ and $x \in \SL_2 (\C)$;
\item $\phi (w)$ is semisimple for all $w \in \mb W_F$, that is, $\rho (\phi (w,x))$
is semisimple for every finite dimensional representation $\rho$ of 
$G^\vee \rtimes \mb W_F $;
\item $\phi \big|_{\SL_2 (\C)} : \SL_2 (\C) \to G^\vee$ is a homomorphism
of algebraic groups.
\end{itemize}
The group $G^\vee$ acts on the set $\tilde{\Phi} (G)$ of such $\phi$'s by conjugation. 
The set of Langlands parameters for $G$ is defined as the set $\Phi (G)$ of 
$G^\vee$-orbits in $\tilde{\Phi} (G)$.
\end{defn}

We note that $\Phi (G)$ is a subset of $H^1_c (\mb W_F ,G^\vee)$.
The conjectural local Langlands correspondence asserts that there exists a 
canonical, finite-to-one map
\begin{equation}\label{eq:1.7}
\Irr (G) \to \Phi (G) .
\end{equation}
The inverse image of $\phi \in \Phi (G)$ is called the L-packet $\Pi_\phi (G)$. 
Given $\phi \in \tilde{\Phi} (G)$, let $Z_{G^\vee} (\phi)$ be the centralizer of
$\phi (\mb W_F \times \SL_2 (\C))$ in $G^\vee$. Notice that 
\begin{equation}\label{eq:1.20}
Z(G^\vee) \cap Z_{G^\vee}(\phi) = Z(G^\vee)^{\mb W_F}
\end{equation}
by the definition of $\phi$. The (geometric) R-group of $\phi$ is the component group
\begin{equation} \label{eqn:geoR-group}
\cR_\phi := \pi_0 \big( Z_{G^\vee}(\phi) / Z(G^\vee )^{\mb W_F} \big) .
\end{equation}
It is clear that, up to isomorphism, $\cR_\phi$ depends only on the image of $\phi$ in
$\Phi (G)$.

Suppose now that $G$ is quasi-split over $F$. Then it is expected that $\Pi_\phi (G)$ is
in bijection with $\Irr (\mf R_\phi)$. This was first suggested in a special case in
\cite[\S 1.5]{Lus1}. When $F$ is $p$-adic this was proven for 
quasi-split orthogonal and symplectic groups in \cite{Art2}, for corresponding 
quasi-split similitude groups in \cite{Xu}, and for quasi-split unitary 
groups in \cite{Mok}. The main method in these works is twisted endoscopic transfer,
they rely on the LLC for $\GL_n (F)$.

\subsection{Inner forms and inner twists} \

General connected reductive $F$-groups need not be quasi-split, but they are always forms
of split $F$-groups. Let us recall 
the parametrization of forms by means of Galois cohomology.
Two $F$-groups $G = \cG (F)$ and $G_2 = \cG_2 (F)$ are called forms of each other if
$\cG$ is isomorphic to $\cG_2$ as algebraic groups, or equivalently if
$\cG (F_\sep) \cong \cG_2 (F_\sep)$ as $F_\sep$-groups. An isomorphism $\alpha:
\cG_2 \to \cG$ determines a 1-cocycle
\begin{equation}\label{eq:1.10}
\gamma_\alpha : \begin{array}{ccc} \Gal (F_\sep / F) & \to & \Aut (\cG) \\
\sigma & \mapsto & \alpha \sigma \alpha^{-1} \sigma^{-1} .                 
\end{array}
\end{equation}
From $\gamma_\alpha$ one can recover $G_2$ (up to isomorphism) as
\[
G_2 \cong \{ g \in \cG (F_\sep) : (\gamma_\alpha (\sigma) \circ \sigma) g = g 
\quad \forall \sigma \in \Gal (F_\sep / F) \} .
\]
Given another form $\beta : \cG_3 \to \cG$, the groups $G_2$ and $G_3$ are
$F$-isomorphic if and only if the 1-cocycles $\gamma_\alpha$ and $\gamma_\beta$
are cohomologous. That is, if there exists a $f \in \Aut (\cG)$ such that
\begin{equation} \label{eq:cohomologous}
\gamma_\alpha (\sigma) = f^{-1} \gamma_\beta (\sigma) \; \sigma f \sigma^{-1}
\quad \forall \sigma \in \Gal (F_\sep / F) .
\end{equation}
In this way the isomorphism classes of forms of $G = \cG (F)$ are in bijection with
the Galois cohomology group $H^1 (F,\Aut (\cG))$. By definition $G_2$ is an
inner form of $G$ if the cocycle $\gamma_\alpha$ takes values in the group of
inner automorphisms $\mathrm{Inn}(\cG)$ (which is isomorphic to the adjoint group
$\cG_{\ad}$). On the other hand, if the values of $\gamma_\alpha$ are not contained 
in Inn$(\cG)$, then $G_2$ is called an outer form of $G$.

By \cite[\S 16.4]{Spr} that every connected reductive $F$-group is an 
inner form of a unique quasi-split $F$-group. It is believed that in the Langlands
program it is advantageous to study all inner forms of a given group
simultaneously. One reason is that the inner forms share the same Langlands dual 
group, because the action of $\mb W_F$ on $G^\vee$ is only uniquely defined up to
inner automorphisms. Hence two inner forms have the same set of Langlands parameters.
This also works the other way round: from the Langlands dual group ${}^L G$ one can
recover the inner form class of $G$.

Later we will see that it is even better to 
consider not inner forms, but rather inner twists of a fixed (quasi-split) group.
An inner twist consists of a pair $(G_2,\alpha)$ as above, where $G_2 = \cG_2 (F)$
and $\alpha : \cG_2 \isom \cG$ are such that im$(\gamma_\alpha) \subset \cG_{\ad}$.
Two inner twists of $G$ are equivalent if \eqref{eq:cohomologous} holds for
some $f \in \mathrm{Inn}(\cG)$. 
The equivalence classes of inner twists of $G$ are parametrized by the Galois
cohomology group $H^1 (F,\cG_{\ad})$. 

It is quite possible that two inequivalent inner twists $(G_2,\alpha)$ and
$(G_3,\beta)$ share the same group $G_2 \cong G_3$. This happens precisely
when $\gamma_\alpha$ and $\gamma_\beta$ are in the same orbit of
$\Aut (\cG) / \mathrm{Inn}(\cG)$ on $H^1 (F,\cG_{\ad})$.

Kottwitz has found an important alternative description of $H^1 (F,\cG)$.
Recall that the complex dual group $G^\vee = \cG^\vee (\C)$ is endowed with
an action of $\Gal (F_\sep / F)$. 

\begin{prop}\label{prop:Kot} \textup{\cite[Proposition 6.4]{Kot}} \ \\
There exists a natural isomorphism
\[
\kappa_G : H^1 (F,\cG) \isom \Irr \Big( \pi_0 \big( Z (G^\vee)^{\mb W_F} \big) \Big) 
\]
\end{prop}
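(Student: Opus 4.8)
The plan is to follow Kottwitz's original strategy, reducing the statement about the reductive group $\cG$ to the case of tori by means of the abelianization of $H^1$. First I would recall the canonical surjection $\cG \to \cG/\cG_{\der} =: D$ onto the cocenter torus, together with the central torus $Z := \Cent(\cG)^\circ$. Dually there is the exact sequence relating $Z(G^\vee)$, the dual of $D$, and the dual of $\cG_{\sc}$; passing to $\mb W_F$-invariants and component groups turns statements about $\Irr(\pi_0(Z(G^\vee)^{\mb W_F}))$ into statements assembled from the torus cases via Pontryagin duality. The key input is the computation for a torus $\mc T$: here $H^1(F,\mc T)$ is, by Tate--Nakayama duality (local class field theory in the form $H^2(\Gal(E/F),E^\times)\cong \tfrac1{[E:F]}\Z/\Z$), canonically dual to $\pi_0((X^*(\mc T)\otimes\C^\times)^{\mb W_F}) = \pi_0(\mc T^\vee{}^{\mb W_F}) = \pi_0(Z(\mc T^\vee)^{\mb W_F})$, which is exactly $\kappa_{\mc T}$.

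The main steps, in order, are: (1) establish $\kappa_{\mc T}$ for tori via Tate--Nakayama, checking naturality in $\mc T$; (2) for semisimple simply connected $\cG_{\sc}$, invoke the theorem of Kneser--Bruhat--Tits that $H^1(F,\cG_{\sc}) = 1$ over a nonarchimedean local field, matching the fact that $Z(G_{\sc}^\vee)$ is the full center of the adjoint dual group, whose relevant invariant-component group one shows is trivial; (3) for $\cG$ adjoint or more generally with the derived group handled, use the exact sequence $1 \to \cG_{\sc} \to \cG_{\der} \to 1$ and $\cG_{\der}\to\cG\to D$ to produce a commutative ladder of long exact cohomology sequences, whose dual (under Pontryagin duality, which is exact) is the long exact sequence of component-group duals coming from $1 \to Z(D^\vee) \to Z(G^\vee) \to \cdots$; (4) chase this diagram, using the torus cases at both ends and the vanishing from step (2) in the middle, and apply the five lemma to deduce that $\kappa_G$ exists and is an isomorphism. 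Naturality of $\kappa_G$ follows because each arrow in the construction is natural in $\cG$.

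The hard part will be step (3)--(4): getting the bookkeeping of the two dual exact sequences to line up, in particular verifying that the connecting maps on the Galois-cohomology side are carried by Pontryagin duality precisely to the connecting maps on the dual side, and handling the passage $(-)^{\mb W_F}$ followed by $\pi_0(-)$, which is not exact on the nose — one must track the relevant $H^2$ obstruction terms and see that, over a nonarchimedean local field, they are either killed by the simply-connected vanishing or accounted for by Tate--Nakayama. One should also be careful that $\Irr(\pi_0(-))$ of a (possibly nonabelian, but here abelian) finite group means Pontryagin dual of the abelianized component group, so the target is genuinely a group and duality applies; since all the component groups in sight are abelian this is harmless but deserves a remark. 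Once the diagram is set up correctly the five lemma finishes it, and the whole argument is essentially the one in \cite[\S 6]{Kot}.
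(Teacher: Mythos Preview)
The paper does not give its own proof of this proposition: it is stated with a bare citation to \cite[Proposition 6.4]{Kot} and used as a black box. So there is no ``paper's proof'' to compare against; you have essentially reconstructed (the skeleton of) Kottwitz's original argument.

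That said, a couple of comments on your sketch. Your step~(2) is slightly garbled: if $\cG_{\sc}$ is simply connected semisimple then its dual is adjoint, so $Z((\cG_{\sc})^\vee)$ is the trivial group outright, not merely something whose ``relevant invariant-component group one shows is trivial''. More substantively, the exact sequence you write in step~(3), ``$1 \to \cG_{\sc} \to \cG_{\der} \to 1$'', is missing the finite kernel $\mu = \ker(\cG_{\sc} \to \cG_{\der})$, and it is precisely the Galois cohomology of this $\mu$ that carries the information linking the two sides; without tracking it the diagram chase does not close up. Kottwitz in fact organizes this differently, via $z$-extensions $1 \to \mc Z \to \cG' \to \cG \to 1$ with $\cG'_{\der}$ simply connected and $\mc Z$ an induced torus (so $H^1(F,\mc Z)=0$), which sidesteps having to handle $H^1$ of the finite group scheme $\mu$ directly. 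Your overall strategy --- reduce to tori via Tate--Nakayama, kill the simply connected piece via Kneser--Bruhat--Tits, glue with a five-lemma argument --- is the right shape, but the specific d\'evissage you wrote down would need to be replaced by the $z$-extension trick (or Borovoi's abelianized cohomology) to actually go through.
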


This is particularly useful in the following way. An inner twist of $G$ is the same
thing as an inner twist of the unique quasi-split inner form $G^* = \cG^* (F)$.
Let $G^*_{\ad} = \cG^*_{\ad}(F)$ be the adjoint group of $G^*$ and
let $G^\vee_{\sc} = (G_{\ad})^\vee$ be the simply connected cover of the derived
group of $G^\vee$. Here the Kottwitz isomorphism becomes
\begin{equation}\label{eq:1.8}
\kappa_{G^*_{\ad}} \colon H^1 (F,\cG^*_{\ad}) \isom \Irr \Big( Z (G^\vee_{\sc})^{\mb W_F} \Big) .
\end{equation}
This provides a convenient way to parametrize inner twists of $G$.\vspace{3mm}

\textbf{Example.} 
We work out the above when $\cG = \GL_n$, relying on \cite[\S XI.4]{Wei}.
From \eqref{eq:1.8} we see that
\[
H^1 (F,\cG_{\ad}) \cong \Irr \big( Z(\SL_n (\C)) \big) 
\]
is cyclic of order $n$. Let $F_{(n)}$ be the unique unramified extension of $F$ of 
degree $n$, and let $\Fr \in \mb W_F$ be an arithmetic Frobenius element. Thus
\[
\Gal (F_\sep / F) / \Gal (F_{(n)} / F) \cong \Gal (F_{(n)} / F) \cong
\langle \Fr \rangle / \langle \Fr^n \rangle \cong \Z / n \Z .
\]
Let $\varpi_F$ be a uniformizer and define $\gamma \in H^1 (F,\PGL_n)$ by
\[
\gamma (\Fr^m ) = \theta_n^m ,\quad \theta_n = 
\begin{pmatrix}
0 & & & \varpi_F \\
1 & 0 \\
 & \ddots & \ddots \\
 & & 1  & 0
\end{pmatrix} .
\]
Then $\gamma$ generates $H^1 (F,\PGL_n)$. One can check that 
\[
D_\gamma := \{ A \in M_n (F_{(n)}) : \theta_n \Fr (A) \theta_n^{-1} = A \}
\]
is generated by $\theta_n$ and the matrices diag$(a,\Fr (a),\ldots,\Fr^{n-1}(a))$
with $a \in F_{(n)}$. It is a division algebra of dimension $n^2$ over its centre $F$,
called the cyclic algebra $[F_{(n)} / F,\chi,\varpi_F]$ by Weil. The associated
inner twist of $\GL_n (F)$ is 
\[
\Big(\GL_1 (D_\gamma), \text{inclusion } D_\gamma^\times \to \GL_n (F_{(n)}) \Big)
\]
For $m < n ,\; \gamma^m \in H^1 (F,\PGL_n)$ is of order $d = n / \gcd (n,m)$.
One obtains a division algebra $D_{\gamma^m}$ of dimension $d^2$ over $F$, 
contained in $M_d (F_{(d)})$. The associated inner twist is
\[
\Big( \GL_{\gcd (n,m)}(D_{\gamma^m}), \text{ inclusion in } \GL_n (F_{(d)}) \Big) .
\]
In this way one finds that the inner twists of $\GL_n (F)$ are in bijection with
the isomorphism classes of division algebras with centre $F$,
whose dimension divides $n^2$.

Two inner forms $\GL_m (D)$ and $\GL_{m'}(D')$ can be isomorphic even when $D$ is
not isomorphic to $D'$. For example let $D^{op}$ be the opposite algebra of $D$
and denote the inverse transpose of a matrix $A$ by $A^{-T}$. Then
\begin{equation}\label{eq:1.9}
\GL_m (D) \to \GL_m (D^{op}) : A \mapsto A^{-T}
\end{equation}
is a group isomorphism. The group $\Aut (\GL_n) / \mathrm{Inn}(\GL_n)$ has order two,
the nontrivial element is represented by the inverse transpose map $-T$. The 
isomorphism \eqref{eq:1.9} reflects the action of $\Aut (\GL_n) / \mathrm{Inn}(\GL_n)$
on $H^1 (F,\PGL_n) \cong \Z / n \Z$ by $-T \cdot m = -m$. Hence the isomorphism
classes of inner forms of $\GL_n (F)$ are bijection with
\[
H^1 (F,\mathrm{Inn}(\GL_n)) / \{ 1, -T\} \cong (\Z / n \Z) / \{ \pm 1\} .
\]
All the outer forms of $\GL_n (F)$ are unitary groups.
If $\gamma : \Gal (F_\sep / F) \to \Aut (\GL_n)$ is a non-inner 1-cocycle, then
\[
\ker \Big( \Gal (F_\sep / F) \xrightarrow{\gamma} \Aut (\GL_n) / \mathrm{Inn}(\GL_n) \Big)
\]
is an index two subgroup of $\Gal (F_\sep / F)$. It defines a separable quadratic
field extension $E / F$. One such cocycle is given by
\[
\gamma (\sigma) = \left\{\begin{array}{cl}
\mathrm{id} & \sigma \in \Gal (F_\sep / E) , \\
\mathrm{Ad}(J_n) \circ -T & \sigma \in \Gal (F_\sep / F) \setminus \Gal (F_\sep / E)
\end{array} \right. \quad
J_n = \begin{pmatrix}
0 & \cdots & 0 & 1 \\
\vdots &\text{\reflectbox{$\ddots$}} & 1 & 0 \\
0 & \text{\reflectbox{$\ddots$}} & \text{\reflectbox{$\ddots$}}& \vdots \\
1 & 0 & \cdots & 0 
\end{pmatrix}
\]
The corresponding outer form $\GL_{n,\gamma}$ is
\[
\rU_n (E/F) := \{ A \in \GL_n (E) : (\gamma (\sigma) \circ \sigma) A = A \quad
\forall \sigma \in \Gal (F_\sep / F) \} .
\]
This is the unitary group associated with the Hermitian form on $E^n$ determined by
$J_n$ and $F$. It is quasi-split, the upper triangular matrices in $\rU_n (E/F)$
form a Borel subgroup. From \eqref{eq:1.8} we see that 
\[
\big| H^1 (F,\GL_{n,\gamma}) \big| = \big| Z(\SL_n (\C))^{\mb W_F} \big| ,
\]
where $\mb W_F$ acts on $\SL_n (\C)$ via $\gamma$. Hence
\[
H^1 (F,\GL_{n,\gamma}) = \left\{ \begin{array}{cl}
\Z / 2 \Z & n \text{ even} \\
1 & n \text{ odd.}
\end{array}
\right.
\]
When $n$ is even, the unique other inner form of $U_n (E/F)$ is a unitary group
associated to another $n$-dimensional Hermitian space over $E/F$. It can be constructed
as above, but with a matrix $J_{n-2} \oplus \matje{1}{0}{0}{a}$ instead of $J_n$.
Here $\matje{1}{0}{0}{a}$ represents a two-dimensional anisotropic Hermitian space.

\subsection{Enhanced L-parameters and relevance} \

In spite of the successes for quasi-split classical groups, for more general groups, 
the R-group $\cR_\phi$ cannot always parametrize the L-packet $\Pi_\phi (G)$, 
this was already noticed in \cite{Art1}. In fact, $\Pi_\phi (G)$
can very well be empty if $G$ is not quasi-split. 

To overcome this problem, the notion of relevance of L-parameters was devised. 
It is derived from relevance of parabolic and Levi subgroups. (Below and later, we 
call a Levi factor of a parabolic subgroup of $G$ simply a Levi subgroup of $G$.)
Let $\cT$ be a maximal torus of $\cG$ and let $\Delta$ be a basis of the root
system $R(\cG,\cT)$. Recall \cite[Theorem 8.4.3]{Spr} that the set of conjugacy 
classes of parabolic subgroups of $\cG$ is in bijection with the power set of $\Delta$.
The bijection 
\[
R(\cG,\cT) \longleftrightarrow R^\vee (\cG,\cT) = R(G^\vee,T^\vee)
\]
gives a basis $\Delta^\vee$, and provides a canonical bijection between the sets of
conjugacy classes of parabolic subgroups of $\cG$ and of $G^\vee$.

As in \cite[\S 3]{Bor}, we say that a parabolic subgroup $P^\vee$ of $G^\vee$ is
$F$-relevant if the corresponding class of parabolic subgroups of $\cG$ contains
an element $\mc P$ which is defined over $F$. Similarly, we call a Levi subgroup
$M^\vee \subset G^\vee \; F$-relevant if it is a Levi factor of a parabolic subgroup
$P^\vee \subset G^\vee$ which is $F$-relevant.

We say that a parabolic subgroup $P^\vee$ of $G^\vee$ is quasi-stable under $\mb W_F$ 
if the projection $N_{G^\vee \rtimes \mb W_F} (P^\vee) \to \mb W_F$ is surjective. 
These are precisely the neutral components of what Borel \cite[\S 3]{Bor} calls 
parabolic subgroups of $G^\vee \rtimes \mb W_F$.

\begin{defn}\label{def:relevance}
Let $\phi \in \tilde{\Phi} (G)$ and let $P^\vee$ be a $\mb W_F$-quasi-stable parabolic 
subgroup of $G^\vee$ with a Levi factor $M^\vee$ such that
\begin{itemize}
\item the image of $\phi$ is contained in $N_{P^\vee \rtimes \mb W_F}(M^\vee)$;
\item $P^\vee$ is a minimal for this property.
\end{itemize}
Then $\phi$ is called relevant for $G$ if $P^\vee$ is $F$-relevant.
\end{defn}

We remark that above one cannot substitute the first requirement by
``the image of $\phi$ is contained in $P^\vee \rtimes \mb W_F$'', that would
give an unsatisfactory notion of relevance. It is expected that in general 
$\Pi_\phi (G)$ is nonempty if and only if $\phi$ is relevant for $G$.\\

\textbf{Example.} Let $G = D^\times$ be the multiplicative group of a 4-dimensional
noncommutative division algebra over $F$. It is the unique non-split inner
form of $\GL_2 (F)$. The only Levi subgroup of $D^\times$ defined over $F$ is
$D^\times$ itself, and it corresponds to the Levi subgroup $\GL_2 (\C)$ on the
complex side.

Consider $\phi_1 \in \tilde{\Phi} (\GL_2 (F)) = \tilde{\Phi} (D^\times)$
which is just the embedding $\mb W_F \times \SL_2 (\C) \to \GL_2 (\C) \times \mb W_F$.
No proper parabolic subgroup of $\GL_2 (\C)$ contains $\phi_1 (\SL_2 (\C)) = \SL_2 (\C)$,
so $\phi_1$ is relevant for both $D^\times$ and $\GL_2 (F)$. Indeed, 
$\Pi_\phi (\GL_2 (F))$ is the Steinberg representation of $\GL_2 (F)$ and 
$\Pi_\phi (D^\times)$ is the Steinberg representation of $D^\times$ (which is
just the trivial representation).

On the other hand, suppose that $\phi_2 \in \tilde{\Phi} (\GL_2 (\C))$ with 
\[
\phi_2 (\SL_2 (\C)) = 1 \quad \text{and} \quad 
\phi_2 (\mb W_F) \subset \text{diag}(\GL_2 (\C)) \times \mb W_F .
\]
Then $M^\vee = \text{diag}(\GL_2 (\C))$ is the minimal Levi subgroup such that
$M^\vee \times \mb W_F$ contains the image of $\phi_2$. Thus the standard Borel
subgroup $P^\vee$ of $\GL_2 (\C)$ satisfies the conditions in Definition 
\ref{def:relevance}. But its conjugacy class does not correspond to any parabolic 
subgroup of $D^\times$, so $\phi_2$ is not relevant for $D^\times$.

\vspace{2mm}
To parametrize L-packets, we must add some extra data to our Langlands parameters $\phi$.
In view of the quasi-split case we need at least the irreducible representations of
the geometric R-group $\cR_\phi$, but that is not enough. We will use enhancements that 
carry information about both the R-group of $\phi$ and the inner twists of $G$. 
We will follow Arthur's set-up in \cite{Art1}.

Recall that $G^\vee_{\sc}$ is the simply connected cover of both the derived group 
$G^\vee_{\der}$ and the adjoint group $G^\vee_{\ad}$ of $G^\vee$. It acts on 
$\tilde{\Phi} (G)$ by conjugation, via the natural map $G^\vee_{\sc} 
\to G^\vee_{\der}$. For $\phi \in \tilde{\Phi} (G)$, let $Z_{G^\vee_{\sc}}(\phi)$ 
be the centralizer of $\phi (\mb W_F \times \SL_2 (\C))$ in $G^\vee_{\sc}$.
By \eqref{eq:1.20}
\begin{equation}\label{eq:1.21}
Z_{G^\vee}(\phi) / Z (G^\vee)^{\mb W_F} \cong Z_{G^\vee}(\phi) Z(G^\vee) / Z(G^\vee) .
\end{equation}
We can regard the right hand side as a subgroup of $G^\vee_\ad$. Let 
$Z^1_{G^\vee_\sc}(\phi)$ be its inverse under the projection $G^\vee_\sc \to G^\vee_\ad$. 
Although $Z^1_{G^\vee_\sc}(\phi)$ contains $Z_{G^\vee_\sc}(\phi)$ as a 
normal subgroup of finite index, not all its elements fix $\phi$. More precisely
\[
Z^1_{G^\vee_\sc}(\phi) = \{ g \in G^\vee_\sc : g \phi g^{-1} = \phi \, a_g
\text{ for some } a_g \in B^1 (\mb W_F ,Z(G^\vee)) \} .
\]
Here $B^1 (\mb W_F,Z(G^\vee))$ is the set of 1-coboundaries for group cohomology, 
that is, maps $\mb W_F \to Z(G^\vee)$ of the form $w \mapsto z w z^{-1} w^{-1}$ 
with $z \in Z(G^\vee)$.

The difference between $Z_{G^\vee_\sc}(\phi)$ and $Z^1_{G^\vee_\sc}(\phi)$ is caused 
by the identification \eqref{eq:1.21}, which as it were includes $Z(G^\vee)$ in 
$Z_{G^\vee}(\phi)$. We note that $Z^1_{G^\vee_\sc}(\phi) = Z_{G^\vee_\sc}(\phi)$
whenever $Z(G^\vee_\sc)^{\mb W_F} = Z(G^\vee_\sc)$, in particular if $G$ is an inner
twist of a split group. On the other hand, if $Z(G^\vee_\sc)^{\mb W_F} \neq Z(G^\vee_\sc)$,
then it does not suffice to consider $Z_{G^\vee_\sc}(\phi)$, that would not necessarily
account for all elements of $Z_{G^\vee}(\phi)$.

\begin{defn}
The S-group of $\phi$ is the component group 
$\cS_\phi = \pi_0 \Big( Z^1_{G^\vee_{\sc}}(\phi) \Big)$.

An enhancement of $\phi$ is an irreducible complex representation of $\cS_\phi$.
\end{defn} 

The next lemma implies that every irreducible representation of $\cR_\phi$ lifts to
one of $\cS_\phi$.

\begin{lem}\label{lem:1.1}
Write $\cZ_\phi = Z(G^\vee_{\sc}) \big/ \big( Z(G^\vee_{\sc}) \cap 
Z_{G^\vee_{\sc}}(\phi)^\circ \big)$. These groups fit in a natural central extension
\[
1 \to \cZ_\phi \to \cS_\phi \to \cR_\phi \to 1.
\]
\end{lem}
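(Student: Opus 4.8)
First I would unwind the definitions. We have $\cR_\phi = \pi_0\big(Z_{G^\vee}(\phi)/Z(G^\vee)^{\mb W_F}\big)$ and $\cS_\phi = \pi_0\big(Z_{G^\vee_{\sc}}(\phi)\big)$, and the natural map $q\colon G^\vee_{\sc}\to G^\vee_{\der}\subset G^\vee$ is a central isogeny with kernel $Z(G^\vee_{\sc})$. Restricting $q$ to centralizers gives a homomorphism $Z_{G^\vee_{\sc}}(\phi)\to Z_{G^\vee}(\phi)$. I would first check that its image lands in the identity component times $Z(G^\vee)^{\mb W_F}$ after passing to $\pi_0$: composing with the projection $Z_{G^\vee}(\phi)\to Z_{G^\vee}(\phi)/Z(G^\vee)^{\mb W_F}$ and then to $\pi_0$ produces a homomorphism $\cS_\phi\to\cR_\phi$. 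The kernel of $q$ restricted to $Z_{G^\vee_{\sc}}(\phi)$ is $Z(G^\vee_{\sc})\cap Z_{G^\vee_{\sc}}(\phi)$; using the identity $Z(G^\vee)\cap Z_{G^\vee}(\phi)=Z(G^\vee)^{\mb W_F}$ noted just before Lemma~\ref{lem:1.1} (and its $\sc$-analogue, since $\phi$ acts through $\mb W_F$ on the center), one gets $Z(G^\vee_{\sc})\cap Z_{G^\vee_{\sc}}(\phi)=Z(G^\vee_{\sc})^{\mb W_F}$, so in fact all of $Z(G^\vee_{\sc})^{\mb W_F}$ centralizes $\phi$.

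**Surjectivity and the kernel.** Next I would establish that $\cS_\phi\to\cR_\phi$ is surjective. Given a component of $Z_{G^\vee}(\phi)/Z(G^\vee)^{\mb W_F}$, lift a representative to $g\in Z_{G^\vee}(\phi)\subset G^\vee_{\der}$ (one may assume $g$ lies in the derived group after multiplying by a central element), then lift $g$ along the isogeny $q$ to $\tilde g\in G^\vee_{\sc}$. The point $\tilde g$ need not centralize $\phi$ on the nose, but $\tilde g\,\phi(w,x)\,\tilde g^{-1}\phi(w,x)^{-1}$ maps under $q$ to $g\phi(w,x)g^{-1}\phi(w,x)^{-1}$ (interpreting the conjugation action through $q$), which is trivial in $G^\vee_{\der}$; hence $\tilde g$ centralizes $\phi$ modulo $\ker q = Z(G^\vee_{\sc})$. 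The obstruction is a continuous cocycle $\mb W_F\times\SL_2(\C)\to Z(G^\vee_{\sc})$; since $\SL_2(\C)$ is connected and $Z(G^\vee_{\sc})$ finite, it factors through $\mb W_F$, and since the image must be $\mb W_F$-invariant it lands in $Z(G^\vee_{\sc})^{\mb W_F}$, which we already know centralizes $\phi$. Using a connectedness/continuity argument (the image of a compact generating set, or working with the profinite quotient) one corrects $\tilde g$ by an element of $Z_{G^\vee_{\sc}}(\phi)^\circ$-orbit considerations to land genuinely in $Z_{G^\vee_{\sc}}(\phi)$, proving surjectivity. Finally I would identify the kernel: an element of $\cS_\phi$ dies in $\cR_\phi$ iff its image in $Z_{G^\vee}(\phi)$ lies in $Z(G^\vee)^{\mb W_F}\cdot Z_{G^\vee}(\phi)^\circ$; tracing this back through $q$ and using that $Z_{G^\vee_{\sc}}(\phi)^\circ$ maps onto $Z_{G^\vee}(\phi)^\circ$ (isogenies are open), the kernel is exactly the image of $Z(G^\vee_{\sc})^{\mb W_F}$ in $\cS_\phi$, namely $Z(G^\vee_{\sc})^{\mb W_F}/\big(Z(G^\vee_{\sc})^{\mb W_F}\cap Z_{G^\vee_{\sc}}(\phi)^\circ\big)=\cZ_\phi$.

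**Centrality.** The extension is central because $Z(G^\vee_{\sc})$ is central in $G^\vee_{\sc}$, so a fortiori $\cZ_\phi$, being a subquotient of $Z(G^\vee_{\sc})^{\mb W_F}$, is central in $\cS_\phi$. Naturality in $\phi$ follows since every construction — the isogeny $q$, the centralizers, $\pi_0$ — is functorial with respect to $G^\vee_{\sc}$-conjugation, which is how $\Psi(G)$-equivalence is defined. This shows every irreducible representation of $\cR_\phi$ inflates to one of $\cS_\phi$, as claimed after the lemma.

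**Main obstacle.** The delicate point is the surjectivity argument: controlling the $Z(G^\vee_{\sc})$-valued discrepancy cocycle and showing one can genuinely correct $\tilde g$ to an element of $Z_{G^\vee_{\sc}}(\phi)$ rather than merely centralizing $\phi$ up to the identity component. One must use that $\phi$ is a continuous homomorphism out of $\mb W_F\times\SL_2(\C)$, that $\SL_2(\C)$ is connected semisimple and simply connected (so any homomorphism to a finite group is trivial and any algebraic homomorphism to $G^\vee$ lifts to $G^\vee_{\sc}$), and that the resulting $\mb W_F$-cocycle into the finite group $Z(G^\vee_{\sc})$ has image in $Z(G^\vee_{\sc})^{\mb W_F}$, which harmlessly centralizes $\phi$. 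Everything else is a diagram chase with component groups and central isogenies.
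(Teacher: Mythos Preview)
Your overall outline --- build a map $\cS_\phi \to \cR_\phi$ from the isogeny, prove surjectivity, identify the kernel, check centrality --- matches the paper's. But your surjectivity step is based on a misconception and, as written, does not work. You say ``the point $\tilde g$ need not centralize $\phi$ on the nose'' and then chase a $Z(G^\vee_{\sc})$-valued obstruction cocycle. There is no such obstruction: by definition $G^\vee_{\sc}$ acts on $\Psi(G)$ \emph{through} its image in $G^\vee$ (equivalently, in $G^\vee_{\ad}$), so $\tilde g \in Z_{G^\vee_{\sc}}(\phi)$ holds if and only if $q(\tilde g)$ centralizes $\phi$. Thus any lift $\tilde g$ of an element $g$ that centralizes $\phi$ already lies in $Z_{G^\vee_{\sc}}(\phi)$, with nothing to correct. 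Your attempted correction ``by an element of $Z_{G^\vee_{\sc}}(\phi)^\circ$-orbit considerations'' is therefore both unnecessary and, as a fix for a nonexistent cocycle, not meaningful. A secondary gap: the step ``one may assume $g$ lies in the derived group after multiplying by a central element'' is not justified, since writing $g = zh$ with $z \in Z(G^\vee)^\circ$ and $h \in G^\vee_{\der}$ does not guarantee $z \in Z(G^\vee)^{\mb W_F}$, which is what you need to stay in the same $\cR_\phi$-class.

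The paper sidesteps both issues by first observing
\[
Z_{G^\vee}(\phi)/Z(G^\vee)^{\mb W_F} \;\cong\; Z_{G^\vee_{\ad}}(\phi),
\]
so that $\cR_\phi \cong \pi_0\big(Z_{G^\vee_{\ad}}(\phi)\big)$. Since $G^\vee_{\sc} \to G^\vee_{\ad}$ is a surjective finite central cover, every element of $Z_{G^\vee_{\ad}}(\phi)$ lifts to $G^\vee_{\sc}$ and, by the remark above, any such lift lies in $Z_{G^\vee_{\sc}}(\phi)$; surjectivity of $\cS_\phi \to \cR_\phi$ is then immediate. The same finite-cover observation gives that $Z_{G^\vee_{\sc}}(\phi)$ and $Z_{G^\vee_{\ad}}(\phi)$ are locally isomorphic, whence the kernel computation. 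Your kernel and centrality paragraphs are fine; the fix is simply to replace the cocycle discussion by the one-line observation that the action factors through $G^\vee_{\ad}$.
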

\begin{proof}
First we note that $Z(G^\vee_{\sc})$ is contained in the centre of 
$Z^1_{G^\vee_{\sc}}(\phi)$. As $Z^1_{G^\vee_{\sc}}(\phi)^\circ = Z_{G^\vee_{\sc}}
(\phi)^\circ$, this means that $\cZ_\phi$ is a central subgroup of $\cS_\phi$.

The kernel of the natural map $Z^1_{G^\vee_{\sc}}(\phi) \to Z_{G^\vee}(\phi) Z(G^\vee) 
/ Z(G^\vee)$ is $Z(G^\vee_\sc)$, so $\cZ_\phi = \ker (\cS_\phi \to \cR_\phi)$.

Consider any $g \in Z_{G^\vee}(\phi)$. Pick $g_1 \in G^\vee_\der$ and 
$g_2 \in Z(G^\vee)^\circ$ so that $g_2 g_1 = g$. For any 
$(w,x) \in \mb W_F \times \SL_2 (\C)$ we have $\phi (w,x) \in G^\vee w$ and
\begin{multline*}
g^{-1}_1 g^{-1}_2 = g^{-1} = \phi (w,x) g^{-1} \phi (w,x) = 
\phi (w,x) g^{-1}_1 \phi (w,x)^{-1} \phi (w,x) g^{-1}_2 \phi (w,x)^{-1} \\
= \phi (w,x) g_1^{-1} \phi (w,x)^{-1} w g_2^{-1} w^{-1} .
\end{multline*}
Hence $g_1 \phi (w,x) g_1^{-1} \phi (w,x)^{-1} = g_2^{-1} w g_2 w^{-1} \in 
Z(G^\vee)^\circ \cap G^\vee_\der$. In other words,
\[
g_1 \phi (w,x) g_1^{-1} = \phi (w,x) a (w) \text{ where } a(w) = g_2^{-1} w g_2 w^{-1} .
\]
Let $g_3 \in G^\vee_\sc$ be a lift of $g_1 \in G^\vee_\der$. Then also $g_3 \phi g_3^{-1}
= \phi \, a$, showing that $g_3 \in Z^1_{G^\vee_\sc}(\phi)$. The image of $g_3$ in
$Z_{G^\vee}(\phi) Z(G^\vee) / Z(G^\vee)$ is $g_1 Z(G^\vee) = g_2 g_1 Z(G^\vee) =
g Z(G^\vee)$. Thus $\cS_\phi \to \cR_\phi$ is surjective.  
\end{proof}

Let us write $\cZ_\phi^{\mb W_F} = Z(G^\vee_\sc)^{\mb W_F} \big/ \big( Z(G^\vee_\sc
)^{\mb W_F} \cap Z_{G^\vee_\sc}(\phi)^\circ \big)$. According to \cite[\S 4]{Art1}
\begin{equation}\label{eq:1.22}
Z(G^\vee_\sc) \cap Z_{G^\vee_\sc}(\phi)^\circ \subset Z(G^\vee_\sc)^{\mb W_F} .
\end{equation}
Hence $\cZ_\phi^{\mb W_F}$ can be regarded as a subgroup of $\cZ_\phi$ and
\begin{equation}\label{eq:1.23}
\cZ_\phi / \cZ_\phi^{\mb W_F} \cong Z(G^\vee_\sc) / Z(G^\vee_\sc)^{\mb W_F} .
\end{equation}
By Schur's lemma every enhanced Langlands parameter $(\phi,\rho)$ restricts to a character
$\rho |_{\cZ_\phi^{\mb W_F}}$ of $\cZ_\phi^{\mb W_F}$. This can be inflated to a character 
$\zeta_\rho$ of $Z(G^\vee_{\sc})^{\mb W_F}$. With the Kottwitz isomorphism \eqref{eq:1.8} 
we get an element $\kappa_{G^*_{\ad}}^{-1}(\zeta_\rho) \in H^1 (F,\cG^*_{\ad})$. In this way 
$(\phi,\rho)$ determines a unique inner twist of $G$. This can be regarded as an alternative
way to specify for which inner twists of $G$ an enhanced Langlands parameter is relevant.
Fortunately, it turns out that it agrees with the earlier definition of relevance of
Langlands parameters.

\begin{prop}\label{prop:1.2}
Let $\zeta \in \Irr (Z (G^\vee_{\sc})^{\mb W_F})$ and let $G_\gamma$ be the inner 
twist of $G$ associated to $\gamma = \kappa_{G^*_{\ad}}^{-1}(\zeta)$ via \eqref{eq:1.8}. 
For $\phi \in \tilde{\Phi} (G)$ the following are equivalent:
\begin{enumerate}
\item $\phi$ is relevant for $G_\gamma$;
\item $Z(G^\vee_{\sc})^{\mb W_F} \cap Z_{G^\vee_{\sc}}(\phi)^\circ \subset \ker \zeta$;
\item there exists a $\rho \in \Irr (\cS_\phi)$ such that $\zeta$ is the lift of
$\rho \big|_{\cZ_\phi^{\mb W_F}}$ to $Z(G^\vee_{\sc})^{\mb W_F}$.
\end{enumerate}
\end{prop}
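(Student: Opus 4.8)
The plan is to prove the two equivalences $(2)\Leftrightarrow(3)$ and $(1)\Leftrightarrow(2)$ separately; the first is elementary finite group theory and the second is the geometric heart.

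For $(2)\Leftrightarrow(3)$: by Lemma~\ref{lem:1.1} the finite group $\cZ_\phi$ is central in $\cS_\phi$, and by its very definition $\cZ_\phi=Z(G^\vee_{\sc})^{\mb W_F}/\big(Z(G^\vee_{\sc})^{\mb W_F}\cap Z_{G^\vee_{\sc}}(\phi)^\circ\big)$, so (2) says exactly that $\gamma$ factors through $\cZ_\phi$. If a $\rho$ as in (3) exists, then $\rho|_{\cZ_\phi}$ is a scalar character by Schur's lemma, so $\gamma$ factors through $\cZ_\phi$: this is $(3)\Rightarrow(2)$. For $(2)\Rightarrow(3)$, let $\bar\gamma\in\Irr(\cZ_\phi)$ be the character through which $\gamma$ factors and consider the central idempotent $e_{\bar\gamma}=|\cZ_\phi|^{-1}\sum_{z\in\cZ_\phi}\bar\gamma(z)^{-1}z\in\C[\cS_\phi]$; it is nonzero, so some $\rho\in\Irr(\cS_\phi)$ has $\rho(e_{\bar\gamma})\neq 0$, and since $e_{\bar\gamma}$ acts on an irreducible representation as $0$ or as the identity according as $\rho|_{\cZ_\phi}\neq\bar\gamma$ or $=\bar\gamma$, this $\rho$ witnesses (3). (This also reproves the remark that $\Irr(\cR_\phi)$ lifts to $\cS_\phi$.)

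For $(1)\Leftrightarrow(2)$: let $M^\vee$ be a $\mb W_F$-stable Levi subgroup of $G^\vee$, minimal with $\phi(\mb W_F\times\SL_2(\C))\subset M^\vee\rtimes\mb W_F$, so that (1) is the statement that $M^\vee$ is $F$-relevant for $G_\gamma$; write $\widetilde M^\vee$ for the Levi subgroup of $G^\vee_{\sc}$ lying over $M^\vee\cap G^\vee_{\der}$, and $M^*\subset G^*$ for the corresponding Levi of the quasi-split form. The argument has two inputs. The first is the translation, via the dictionary of \cite[\S 3]{Bor} between conjugacy classes of parabolic subgroups and subsets of $\Delta$ and via functoriality of the Kottwitz isomorphism \eqref{eq:1.8} and Proposition~\ref{prop:Kot}: $M^\vee$ is $F$-relevant for $G_\gamma$ if and only if $\kappa_{G^*_{\ad}}^{-1}(\gamma)$ lies in the image of $H^1(F,\overline{M^*})$ (with $\overline{M^*}$ the image of $M^*$ in $\cG^*_{\ad}$), equivalently if and only if $\gamma$ is trivial on $Z(G^\vee_{\sc})^{\mb W_F}\cap\big(Z(\widetilde M^\vee)^{\mb W_F}\big)^\circ$, the kernel of the transition map $Z(G^\vee_{\sc})^{\mb W_F}\to\pi_0(Z(\widehat{\overline{M^*}})^{\mb W_F})$. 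The second input is the identification
\[
Z(G^\vee_{\sc})^{\mb W_F}\cap\big(Z(\widetilde M^\vee)^{\mb W_F}\big)^\circ \;=\; Z(G^\vee_{\sc})^{\mb W_F}\cap Z_{G^\vee_{\sc}}(\phi)^\circ .
\]
For ``$\subseteq$'' one checks that $\big(Z(\widetilde M^\vee)^{\mb W_F}\big)^\circ$ is a connected subgroup of $Z_{G^\vee_{\sc}}(\phi)$ — a central $\mb W_F$-fixed element of $\widetilde M^\vee$ automatically commutes with $\mathrm{im}(\phi)$ — hence lies in $Z_{G^\vee_{\sc}}(\phi)^\circ$. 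For ``$\supseteq$'' one uses the minimality of $M^\vee$, which forces $\widetilde M^\vee$ to be the centralizer in $G^\vee_{\sc}$ of the maximal $\mb W_F$-fixed central subtorus of $Z_{G^\vee_{\sc}}(\phi)^\circ$, whence $Z_{G^\vee_{\sc}}(\phi)^\circ\subset\widetilde M^\vee$ and a component-group comparison finishes it. Combining the two inputs gives $(1)\Leftrightarrow(2)$.

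The routine parts are the finite-group argument and the parabolic–subset dictionary; the main obstacle is the last identification, and in particular the inclusion ``$\supseteq$''. Because $G^\vee_{\sc}\to G^\vee$ is only an isogeny onto $G^\vee_{\der}$, because $\mathrm{im}(\phi)$ need not lie in $G^\vee_{\der}$, and because $\mb W_F$ may act nontrivially, one cannot avoid a careful analysis of connected components: every central $\mb W_F$-fixed element of $G^\vee_{\sc}$ already centralizes $\phi$ (this is the observation that $Z(G^\vee)\cap Z_{G^\vee}(\phi)=Z(G^\vee)^{\mb W_F}$, noted after \eqref{eq:1.7}), so the content is entirely about which of these lie in the identity component of the centralizer, and this must be matched with the equally delicate component group hidden in the Kottwitz transition map. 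I would isolate the needed structural facts — the existence of a $\mb W_F$-stable minimal $M^\vee$, the equality $Z_{M^\vee}(\phi)^\circ=Z(M^\vee)^\circ$ characterising minimality, and the identification above — as separate lemmas on reductive groups with $\Gal(F_\sep/F)$-action, building on \cite[\S 3]{Bor} and \cite[\S 6]{Kot}; the rest of the proof is then formal.
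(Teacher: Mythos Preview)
Your treatment of $(2)\Leftrightarrow(3)$ is correct and matches the paper's: both arguments boil down to the fact that a character of a central subgroup extends to some irreducible of the ambient finite group, and your central-idempotent formulation is equivalent to the paper's use of the induced representation $\ind_{\cZ_\phi}^{\cS_\phi}(\gamma)$.

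For $(1)\Leftrightarrow(2)$ the paper takes a different route: it simply cites \cite[Lemma~9.1]{HiSa} and \cite[Corollary~2.3]{Art0} and does not reprove the statement. Your outline is, in effect, a sketch of what those references do---the translation of $F$-relevance of $M^\vee$ into a condition on the image of $H^1(F,\overline{M^*})$ under Kottwitz functoriality is essentially Arthur's argument, and the identification of $\big(Z(\widetilde M^\vee)^{\mb W_F}\big)^\circ\cap Z(G^\vee_{\sc})^{\mb W_F}$ with $Z_{G^\vee_{\sc}}(\phi)^\circ\cap Z(G^\vee_{\sc})^{\mb W_F}$ via minimality of $M^\vee$ is the content of Hiraga--Saito's lemma. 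So the ``separate lemmas'' you propose to isolate are precisely the cited results; your sketch is on the right track but is not yet a self-contained proof, and the delicate points you flag (the isogeny $G^\vee_{\sc}\to G^\vee_{\der}$, the component-group bookkeeping for the ``$\supseteq$'' inclusion) are exactly where those references do real work. If you want a standalone argument you will need to reproduce that analysis in full; otherwise citing \cite{HiSa,Art0} as the paper does is the honest option.
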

\begin{proof}
(1) $\Longleftrightarrow$ (2) See \cite[Lemma 9.1]{HiSa} and \cite[Corollary 2.3]{Art0}.
We note that what Hiraga and Saito call $S_\phi^\circ$ equals $Z_{G^\vee_{\sc}}(\phi)^\circ$.\\
(2) $\Longrightarrow$ (3) Obvious.\\
(2) $\Longleftarrow$ (3) The assumption says that $\zeta$ can be regarded as a character
of $\cZ_\phi^{\mb W_F}$. The induced $\cS_\phi$-representation 
$\ind_{\cZ_\phi^{\mb W_F}}^{\cS_\phi} (\zeta)$ has finite dimension and 
$Z(G^\vee_{\sc})^{\mb W_F}$ acts on it as $\zeta$. Let $\rho$ be
any irreducible constituent of $\ind_{\cZ_\phi^{\mb W_F}}^{\cS_\phi} (\zeta)$.
\end{proof}

Supported by the above result, we extend the definition of relevance to inner twists and
enhanced L-parameters.

\begin{defn}
Let $(G,\alpha)$ be an inner twist of a quasi-split $F$-group $G^*$. 
Let $\phi \in \tilde{\Phi} (G^*) = \tilde{\Phi} (G)$ and let $\rho \in \Irr (\cS_\phi)$. 
We call $\rho$ relevant for $(G,\alpha)$ if
\[
\kappa_{G^*_{\ad}}^{-1}(\zeta_\rho) = \gamma_\alpha,
\]
where $\zeta_\rho = \rho \big|_{\cZ_\phi^{\mb W_F}}$ and $\gamma_\alpha \in H^1(F,\cG^*_{\ad})$ 
is defined in \eqref{eq:1.10}.

We denote the space of such relevant pairs $(\phi,\rho)$ by $\Phi_e (G)$. 
The group $G^\vee_{\sc}$ acts on $\tilde{\Phi}_e (G)$ by 
\[
g \cdot (\phi,\rho) = (g \phi g^{-1},g \cdot \rho), \text{ where }
(g \cdot \rho)(g h g^{-1}) = \rho (h) \text{ for } h \in Z_{G^\vee_{\sc}}(\phi) .
\]
A $G^\vee_{\sc}$-orbit in $\tilde{\Phi}_e (G)$ is called an enhanced L-parameter for $G$, 
and the set of those is denoted $\Phi_e (G)$.
\end{defn}

\subsection{A bijective version of the LLC} \
\label{par:bijective}

We are ready to formulate our version of the conjectural local Langlands correspondence.
It is inspired by many sources, in particular \cite[\S 10]{Bor}, \cite[\S 4]{Vog}, 
\cite[\S 3]{Art1} and \cite[\S 5.2]{Hai}.

In some cases $\cS_\phi$ is too large, because we have included the entire group $Z(G^\vee_\sc)$.
To compensate for this it is handy to restrict our enhancements of L-parameters to a
subset of $\Irr (\cS_\phi)$.  By Lemma \ref{lem:1.1} and  Schur's lemma, the enhancement 
$\rho$ restricts to a character of $\cZ_{\phi}^{\mb W_F}$, 
which then inflates to a character $\zeta_{\rho}$ of $Z(G^{\vee}_{\sc})^{\mb W_F}$. If $\rho$
is relevant for $G$, then $\zeta_\rho = \kappa_{G^*_{\ad}}(\gamma_\alpha) \in \Irr 
\big( Z(G^\vee_\sc)^{\mb W_F})$. It can be extended in precisely 
$[Z(G^\vee_\sc) : Z(G^\vee_\sc)^{\mb W_F}]$ ways to a character of $Z(G^\vee_\sc)$. We choose 
such an extension and we denote it by $\zeta_G$. By Proposition \ref{prop:1.2} every 
$\phi \in \tilde{\Phi}(G)$ can be enhanced with a $\rho \in \Irr (\cS_\phi)$
such that $\rho \big|_{\cZ_\phi}$ inflates to $\zeta_G$. 

We denote the set of equivalence classes of such $(\phi,\rho) \in \tilde{\Phi}_e (G)$ by 
$\Phi_{e,\zeta_G}(G)$. Of course we pick $\zeta_G =$ triv when $G$ is quasi-split. In that
case Lemma \ref{lem:1.1} shows that $\Phi_{e,\mathrm{triv}}(G)$ agrees with the set of enhanced
L-parameters for $G$ discussed in Paragraph \ref{par:quasi-split}.

\begin{conj}\label{conj:1}
Let $(G,\alpha)$ be an inner twist of a quasi-split $F$-group.
There exists a surjection
\[
\Phi_e (G) \longrightarrow \Irr (G) : (\phi,\rho) \mapsto \pi_{\phi,\rho} ,
\]
which becomes bijective when restricted to $\Phi_{e,\zeta_G}(G)$. We write its inverse as
\[
\Irr (G) \longrightarrow \Phi_{e,\zeta_G}(G) : \pi \mapsto (\phi_\pi,\rho_\pi) .
\]
Then the composed map $\Irr (G) \to \Phi (G) : \pi \mapsto \phi_\pi$ is canonical.
These maps satisfy the properties (1) -- (7) listed below.
\end{conj}

We remark that the above bijection becomes more elegant if one 
considers the union over inner twists, then it says that there exists a surjection
\[
\{ (\phi,\rho) : \phi \in \Phi (G^*), \rho \in \Irr (\cS_\phi) \} \to
\{ (G,\alpha,\pi) : (G,\alpha) \text{ inner twist of } G^*, \pi \in \Irr (G) \} 
\]
whose fibers have exactly $[Z(G^\vee_\sc) : Z(G^\vee_\sc)^{\mb W_F}]$ elements.

Before we write down the additional properties, we recall two notions for L-pa\-ra\-me\-ters.
Let $\phi \in \tilde{\Phi} (G)$. We say that $\phi$ is discrete (or elliptic) if there is no
proper $\mb W_F$-stable Levi subgroup $M^\vee \subset G^\vee$ such that $\phi (\mb W_F
\times \SL_2 (\C)) \subset M^\vee \rtimes \mb W_F$. We call $\phi$ bounded if 
$\phi' (\mb W_F) \subset G^\vee$ is bounded, where $\phi (w) = (\phi' (w),w)$. (This
is equivalent to $\phi' (\Fr)$ being a compact element of $G^\vee$.)\\

\textbf{Desiderata for the local Langlands correspondence (Borel).}
\begin{enumerate}
\item The central character of $\pi$ equals the character of $Z(G)$ constructed from
$\phi_\pi$ in \cite[\S 10.1]{Bor}.
\item Let $z \in H^1_c (\mb W_F, Z (G^\vee))$ be a class in continuous group cohomology, and
let $\chi_z : G \to \C^\times$ be the character associated to it in \cite[\S 10.2]{Bor}.
Thus $z \phi_\pi \in \tilde{\Phi} (G)$ and $\cS_{z \phi_\pi} = \cS_{\phi_\pi}$. 
Then the LLC should satisfy $(z \phi_\pi,\rho_\pi) = (\phi_{\chi_z \pi} , \rho_{\chi_z \pi})$. 
\item $\pi$ is essentially square-integrable if and only if $\phi_\pi$ is discrete.
\item $\pi$ is tempered if and only if $\phi_\pi$ is bounded.
\item Let $P$ be a parabolic subgroup of $G$ with Levi factor $M$.
Suppose that $g \in N_G (M)$ and $\check g \in N_{G^\vee}(M^\vee)$ are such that
Ad$(g) : M \to M$ and Ad$(\check g) : M^\vee \to M^\vee$ form a corresponding pair of
homomorphisms, in the sense of \cite[\S 2]{Bor}. Then 
\[
(\phi_{g \cdot \pi}, \rho_{g \cdot \pi}) = (\mathrm{Ad}(\check g) \phi_\pi, 
\check g \cdot \rho_\pi) \quad \text{for all} \quad \pi \in \Irr (M).
\]
\item Suppose that $(\phi^M,\rho^M) \in \Phi_e (M)$ is bounded. Then
\begin{equation}\label{eq:1.11}
\{ \pi_{\phi,\rho} : \phi = \phi^M \text{ composed with } {}^L M \to {}^L G,
\rho \big|_{\cS_\phi^M} \text{ contains } \rho^M \}
\end{equation}
equals the set of irreducible constituents of the parabolically induced representation
$I_P^G (\pi_{\phi^M ,\rho^M})$.
\item If $\phi^M$ is discrete but not necessarily bounded, then \eqref{eq:1.11} is the set of
Langlands constituents of $I_P^G (\pi_{\phi^M ,\rho^M})$, as in \cite[p. 30]{ABPS1}.
\end{enumerate}

We note that in order to establish Conjecture \ref{conj:1} for (a collection of) groups,
it suffices to prove it for tempered representations and bounded enhanced L-parameters.
This follows from comparing the geometry of the spaces $\Irr (G)$ and $\Phi_e (G)$
\cite{ABPS2}, or from the Langlands classification for $\Irr (G)$ \cite[\S VII.4]{Ren} 
and its counterpart for L-parameters \cite{SiZi}.

Of course one can hope for many more properties, like compatibility with L-functions, 
adjoint $\gamma$-factors \cite{HII} and functoriality. For our survey (1)--(7) are
sufficient. This bijective version of the LLC, including the listed properties, is known
in the following cases:
\begin{itemize}
\item General linear groups over division algebras, or more precisely inner twists of 
$\GL_n (F)$. It is a consequence of the LLC for $\GL_n (F)$ and the Jacquet--Langlands 
correspondence \cite{DKV,Bad}, see \cite[Theorem 2.2]{ABPS3}.
\item Inner twists of $\SL_n (F)$, see \cite[\S 12]{HiSa} and \cite[Theorem 3.3]{ABPS3}.
\item Orthogonal and symplectic groups \cite{Art2} and similitude groups \cite{Xu}.
\item Unitary groups \cite{Mok,KMSW}.
\item Principal series representations of split groups \cite[\S 16]{ABPS5}.
\item Unipotent representations of adjoint groups \cite{Lus6}.
\item Epipelagic representations of tamely ramified groups \cite{Kal}.
\end{itemize}
The last three items concern particular classes of representations of certain groups. 
All the groups for which the complete LLC is currently known are linked to $\GL_n (F)$, 
and the proofs for these groups use the LLC for general linear groups in an essential way. 
It appears to be a big challenge to find an approach to the LLC which does not rely on 
the case of $\GL_n (F)$, and can be applied to more general reductive groups.

\section{The smooth dual of a reductive $p$-adic group}

Let $G$ be a connected reductive group over a local non-archimedean field, and
let $\Irr (G)$ be the set of irreducible (smooth, complex) $G$-representations. In this
section we discuss the geometric structure of $\Irr (G)$. It is topologized via the
Jacobson topology for the Hecke algebra of $G$, and in this way it is automatically
rather close to an algebraic variety.
We propose a generalization of our earlier conjectures \cite{ABP,ABPS2}, which
make the structure of $\Irr (G)$ much more precise. To formulate these conjectures, we need
extended quotients and the Bernstein decomposition.

\subsection{Twisted extended quotients} \ \label{par:extquot}

Let $\Gamma$ be a group acting on a topological space $X$. In 
\cite[\S 2]{ABPS5} we studied various extended quotients of $X$ by $\Gamma$. In this 
paper we need the most general version, the twisted extended quotients.

Let $\natural$ be a given function which assigns to each $x \in X$ a 2-cocycle 
\[
\natural_x : \Gamma_x \times \Gamma_x \to \C^\times \text{, where } 
\Gamma_x = \{\gamma \in \Gamma : \gamma x = x\}. 
\]
Recall that the twisted group algebra $\C [\Gamma_x,\natural_x]$ has a basis
$\{ N_\gamma : \gamma \in \Gamma_x \}$ and multiplication rules
\begin{equation}\label{eq:2.20}
N_\gamma N_{\gamma'} = \natural_x (\gamma,\gamma') N_{\gamma \gamma'} 
\qquad \gamma, \gamma' \in \Gamma_x .
\end{equation}
It is assumed that $\natural_{\gamma x}$ and $\gamma_*\natural_x$ define the same class 
in $H^2 (\Gamma_{\gamma x} , \C^\times)$, where $\gamma_* : \Gamma_x \to \Gamma_{\gamma x}$
sends $\alpha$ to $\gamma \alpha \gamma^{-1}$. We define 
\[
\widetilde X_\natural : = \{(x,\rho) : x \in X, \rho \in \Irr \,\C[\Gamma_x, \natural_x ] \}.
\]
and we topologize it by decreeing that a subset of $\widetilde X_\natural$ is open if
and only if its projection to the first coordinate is open in $X$.

We  require, for every $(\gamma,x) \in \Gamma \times X$, a definite algebra isomorphism
\[
\phi_{\gamma,x} : \C[\Gamma_x,\natural_x ]  \to \C[\Gamma_{\gamma x},\natural_{\gamma x}]
\]
such that:
\begin{itemize}
\item if $\gamma x = x$, then $\phi_{\gamma,x}$ is conjugation by an element of
$\C [\Gamma_x,\natural_x]^\times$;
\item $\phi_{\gamma',\gamma x} \circ \phi_{\gamma,x} = 
\phi_{\gamma' \gamma,x}$ for all $\gamma',\gamma \in \Gamma, x \in X$.
\end{itemize}
Then we can define a $\Gamma$-action on $\widetilde X_\natural$ by
\[
\gamma \cdot (x,\rho) = (\gamma x, \rho \circ \phi_{\gamma,x}^{-1}).
\]
We form the \emph{twisted extended quotient}
\[
(X\q \Gamma)_\natural : = \widetilde{X}_\natural/\Gamma.
\]
Notice that the data used to construct this are very similar to a 2-cocycle $z$ of 
$\Gamma$ with values in the continuous functions $X \to \C^\times$. By formulating it in 
the above way, we remove the need to define $z(\gamma,\gamma')$ at points of $X$ that
are not fixed by $\gamma$.

Furthermore we note that $(X \q \Gamma)_\natural$ reduces to the extended quotient 
of the second kind $(X \q \Gamma)_2$ from \cite[\S 2]{ABPS5} if $\natural_x$ is 
trivial for all $x \in X$ and $\phi_{\gamma,x}$ is conjugation by $\gamma$.

The extended quotient of the second kind is an extension of the ordinary quotient
in the sense that it keeps track of the duals of the isotropy groups. Namely, in 
$(X \q \Gamma)_2$ every point $x \in X / \Gamma$ has been replaced by the set 
$\Irr (\Gamma_x)$.

In the context of representation theory, the twisted extended quotient comes into play 
when reducibility at a point is \emph{less than expected}. To be precise, the number 
of inequivalent irreducible representations at a point is fewer than expected.
\vspace{2mm}

\textbf{Example.}
Let $\Gamma = \{ \pm 1 \}^2$, acting on the square $X = [-1,1]^2$ by sign changes of
the coordinates. In the extended quotient $(X \q \Gamma)_2$ we have two points laying
over $(x,0)$ and over $(0,y)$, since $\Gamma_{(x,0)} \cong \Gamma_{(0,y)} \cong \Z / 2 \Z$.
The fiber over $(0,0)$ even has four points, because
\[
\C [\Gamma_{(0,0)}] = \C [\Gamma] \cong \C^4 .
\]
We define a nontrivial 2-cocycle of $\Gamma$ as follows. Define a projective 
$\Gamma$-representation $\lambda$ on $\C^2$ by
\[
\lambda (1,1) = \matje{1}{0}{0}{1},\; \lambda (1,-1) = \matje{i}{0}{0}{-i},\; 
\lambda (-1,1) = \matje{0}{-1}{1}{0} ,\; \lambda (-1,-1) = \matje{0}{i}{-i}{0}.
\]
The cocycle, with values in $\{\pm 1\}$, is given by
\[
\lambda (\gamma) \lambda (\gamma') = 
\natural_x (\gamma,\gamma') \, \lambda (\gamma \gamma') .
\]
In the twisted extended quotient $(X \q \Gamma)_\natural$ the fiber over $(0,0)$ is
in bijection with the set of irreducible representations of
\[
\C [\Gamma_{(0,0)},\natural_{(0,0)}] = \C [\Gamma,\natural] \cong M_2 (\C) , 
\]
so this fiber consists of a single point. The quotients of $X$ by $\Gamma$ look like:\\

\noindent
ordinary quotient \hspace{5mm} (untwisted) extended quotient \hspace{5mm} 
twisted extended quotient \\
\includegraphics[width=12cm]{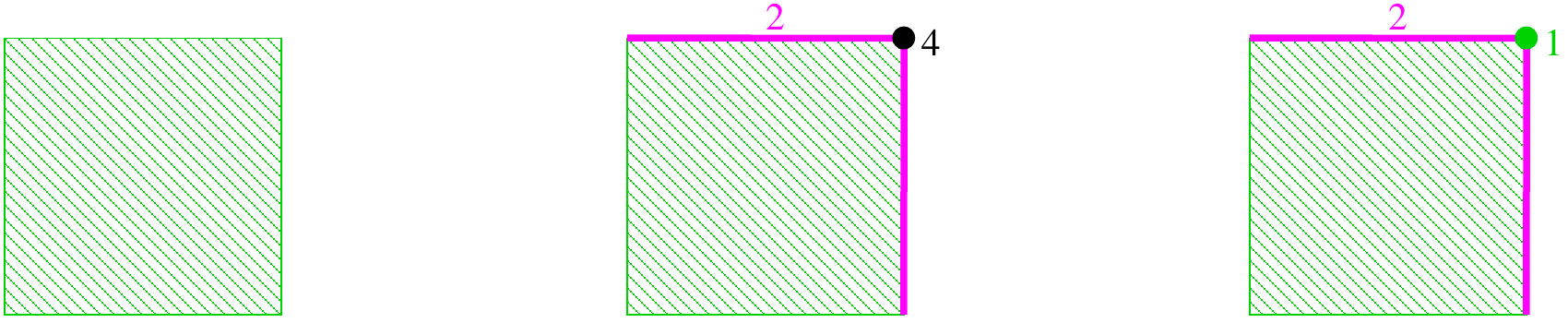} 
\\

More generally, twisted extended quotients arise in the following situation.
Let $A$ be a $\C$-algebra such that all irreducible $A$-modules have countable
dimension over $\C$. Let $\Gamma$ be a group acting on $A$ by
automorphisms and form the crossed product $A \rtimes \Gamma$.

Let $X = \Irr (A)$. Now $\Gamma$ acts on $\Irr (A)$ and we get $\natural$ 
as follows. Given $x \in \Irr (A)$ choose an irreducible representation  
$(\pi_x,V_x)$ whose isomorphism class is $x$. 
For each $\gamma \in \Gamma$ consider $\pi_x$ twisted by $\gamma$:
\[
\gamma \cdot \pi_x : a \mapsto \pi_x (\gamma^{-1} a \gamma).
\]
Then $\gamma \cdot x$ is defined as the isomorphism class of $\gamma \cdot \pi_x$.
Since $\gamma \cdot \pi_x$ is equivalent to $\pi_{\gamma x}$, there exists 
a nonzero intertwining operator 
\begin{equation}\label{eq:B.1}
T_{\gamma,x} \in \Hom_A (\gamma \cdot \pi_x , \pi_{\gamma x}) .
\end{equation}
By Schur's lemma (which is applicable because $\dim V_x$ is countable) $T_{\gamma,x}$ 
is unique up to scalars, but in general there is no preferred choice. 
For $\gamma, \gamma' \in \Gamma_x$ there exists a unique $c \in \C^\times$ such that
\begin{equation}\label{eq:2.21}
c T_{\gamma,x} \circ T_{\gamma',x} = T_{\gamma \gamma',x}.
\end{equation}
We define the 2-cocycle by 
\[
\natural_x (\gamma,\gamma') = c.
\]
Notice the difference between \eqref{eq:2.20} and \eqref{eq:2.21}.
Let $N_{\gamma,x}$ with $\gamma \in \Gamma_x$ be the standard basis of 
$\C [\Gamma_x,\natural_x ]$. The algebra homomorphism $\phi_{\gamma,x}$ is essentially 
conjugation by $T_{\gamma,x}$, but we must be careful if some of the $T_\gamma$
coincide. The precise definition is 
\begin{equation}\label{eq:B.3}
\phi_{\gamma,x} (N_{\gamma',x}) = \lambda^{-1} N_{\gamma \gamma' \gamma^{-1},\gamma x} 
\quad \text{if} \quad T_{\gamma,x} T_{\gamma',x} T_{\gamma,x}^{-1} = 
\lambda T_{\gamma \gamma' \gamma^{-1}, \gamma x}, \lambda \in \C^\times .
\end{equation}
Suppose that $\Gamma_x$ is finite and $(\tau,V_\tau) \in \Irr (\C [\Gamma_x,\natural_x ])$.
Then $V_x \otimes V_\tau$ is an irreducible $A \rtimes \Gamma_x$-module,
where $\gamma \in \Gamma_x$ acts as $T_{\gamma,x} \otimes \tau (N_{\gamma,x})$.

\begin{lem} \label{lem:B.6} 
\textup{\cite[Lemma 2.3]{ABPS5}} \ \\
Let $A$ and $\Gamma$ be as above and assume that the action of $\Gamma$ on
$\Irr (A)$ has finite isotropy groups.
\enuma{
\item There is a bijection
\[
\begin{array}{ccc}
(\Irr (A) \q \Gamma)_\natural & \longleftrightarrow & \Irr (A \rtimes \Gamma) \\
(\pi_x,\tau) & \mapsto & \pi_x \rtimes \tau := 
\Ind_{A \rtimes \Gamma_x}^{A \rtimes \Gamma} (V_x \otimes V_\tau) .
\end{array}
\]
\item If all irreducible $A$-modules are one-dimensional, then part (a) 
becomes a natural bijection
\[
(\Irr (A) \q \Gamma)_2 \longleftrightarrow \Irr (A \rtimes \Gamma) .
\]
}
\end{lem}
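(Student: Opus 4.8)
The plan is to construct the two maps of part (a) explicitly and check they are mutually inverse. First I would fix, for each isomorphism class $x \in \Irr(A)$, a representative $(\pi_x, V_x)$ and a choice of intertwiners $T_{\gamma,x}$ as in \eqref{eq:B.1}; these choices are what rigidify the construction. Given a point $(\pi_x,\tau) \in \widetilde{\Irr(A)}_\natural$ with $\tau \in \Irr \C[\Gamma_x,\natural_x]$, the key preliminary observation (already noted just before the lemma) is that $V_x \otimes V_\tau$ carries an $A \rtimes \Gamma_x$-action: $a$ acts as $\pi_x(a) \otimes \mathrm{id}$ and $\gamma \in \Gamma_x$ acts as $T_{\gamma,x} \otimes \tau(N_{\gamma,x})$. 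The cocycle identity $cT_{\gamma,x}T_{\gamma',x} = T_{\gamma\gamma',x}$ together with $N_{\gamma,x}N_{\gamma',x} = \natural_x(\gamma,\gamma')N_{\gamma\gamma',x}$ is exactly what makes this a genuine (untwisted) representation of $\Gamma_x$ on $V_x \otimes V_\tau$, and it intertwines the $A$-action correctly because $T_{\gamma,x}$ does. Then one sets
\[
\pi_x \rtimes \tau := \Ind_{A \rtimes \Gamma_x}^{A \rtimes \Gamma}(V_x \otimes V_\tau).
\]

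Next I would verify this map descends to the extended quotient and lands in $\Irr(A \rtimes \Gamma)$. For well-definedness on $\Gamma$-orbits one checks that $\gamma \cdot (\pi_x,\tau) = (\pi_{\gamma x}, \tau \circ \phi_{\gamma,x}^{-1})$ is sent to an isomorphic induced module; this is where \eqref{eq:B.3} is used — conjugation by $T_{\gamma,x}$ furnishes the required intertwiner between $V_x \otimes V_\tau$ and $V_{\gamma x} \otimes V_{\tau \circ \phi_{\gamma,x}^{-1}}$ as $A \rtimes \Gamma_{\gamma x}$-modules. Irreducibility of the induced module follows from a Mackey/Clifford-theory argument: since $A \rtimes \Gamma_x$ is the stabilizer (in the relevant sense) of the isomorphism class $x$ and $V_x \otimes V_\tau$ restricts to $A$ as a $\dim V_\tau$-fold multiple of $\pi_x$, two $\Gamma$-translates of it are either isomorphic (when $\gamma \in \Gamma_x$ and the twist on $\tau$ is trivial) or have disjoint restrictions to $A$; Mackey irreducibility then applies. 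Here finiteness of $\Gamma_x$ and countability of $\dim V_x$ (hence applicability of Schur's lemma) are essential.

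For the inverse map, start with $\Sigma \in \Irr(A \rtimes \Gamma)$, restrict to $A$, and pick an irreducible $A$-subrepresentation; its class is some $x \in \Irr(A)$, well-defined up to the $\Gamma$-action since $\Gamma$ permutes the $A$-isotypic pieces transitively (by irreducibility of $\Sigma$). Let $\Sigma_x$ be the $\pi_x$-isotypic component of $\mathrm{Res}_A \Sigma$; it is stable under $A \rtimes \Gamma_x$, and I would show $\Sigma \cong \Ind_{A \rtimes \Gamma_x}^{A \rtimes \Gamma}(\Sigma_x)$ by Frobenius reciprocity plus a dimension/support count. Then $\Sigma_x \cong V_x \otimes W$ for a vector space $W$, and transporting the $\Gamma_x$-action through this isomorphism (dividing out by the already-fixed $T_{\gamma,x}$) endows $W$ with an action of the twisted group algebra $\C[\Gamma_x,\natural_x]$ — the failure of the $\Gamma_x$-action to be a straight representation is precisely the cocycle $\natural_x$, by uniqueness of the $T_{\gamma,x}$ up to scalars (Schur). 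Irreducibility of $\Sigma$ forces $W$ irreducible over $\C[\Gamma_x,\natural_x]$, giving the point $(\pi_x,\tau) \in \widetilde{\Irr(A)}_\natural$, well-defined modulo $\Gamma$. Checking the two maps compose to the identity in both directions is then bookkeeping with induction-restriction. The main obstacle, and the step I would spend the most care on, is the clean handling of the intertwiner choices: showing that neither the $\Gamma$-action on $\widetilde{\Irr(A)}_\natural$ via $\phi_{\gamma,x}$ nor the resulting bijection depends on the chosen $T_{\gamma,x}$, and that all the Clifford-theoretic steps go through without a genuine finite-group hypothesis on $\Gamma$ itself — only on the isotropy groups. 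Part (b) is then immediate: if every $\pi_x$ is one-dimensional the operators $T_{\gamma,x}$ can be normalized to make $\natural_x$ trivial, $\C[\Gamma_x,\natural_x] = \C[\Gamma_x]$, and $\widetilde{\Irr(A)}_\natural$ is the extended quotient of the second kind $(\Irr(A) \q \Gamma)_2$, with the naturality (independence of all choices) coming for free from the one-dimensionality.
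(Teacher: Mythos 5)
The paper does not prove this lemma itself; it simply cites \cite[Lemma 2.3]{ABPS5}, so there is no internal argument to measure your proof against, only the standard Clifford--Mackey machinery that the cited reference presumably carries out. Your outline follows exactly that route and is correct where it matters: the relations $c\,T_{\gamma,x}T_{\gamma',x}=T_{\gamma\gamma',x}$ and $N_{\gamma}N_{\gamma'}=\natural_x(\gamma,\gamma')N_{\gamma\gamma'}$ cancel to make $V_x\otimes V_\tau$ a genuine $A\rtimes\Gamma_x$-module; \eqref{eq:B.3} supplies the $A\rtimes\Gamma_{\gamma x}$-isomorphism that makes the map descend to $\Gamma$-orbits; the induced module is irreducible because its $A$-restriction decomposes into disjoint isotypic pieces over the orbit $\Gamma x$, each one a finite multiple of a simple module (semisimple, by Schur and countable dimension), and any nonzero submodule must meet and then exhaust the identity component by irreducibility of $\tau$; and the inverse recovers $\tau$ as the twisted $\Gamma_x$-action on the $\pi_x$-isotypic component of $\Res_A\Sigma$. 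Part (b) is fine as stated: with one-dimensional $V_x$ the intertwiners are scalars, the canonical normalisation $T_{\gamma,x}=1$ trivialises the cocycle, and the bijection becomes independent of choices.

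The one place you should not glide past is the very first step of your inverse map, namely the assertion that for every $\Sigma\in\Irr(A\rtimes\Gamma)$ the restriction $\Res_A\Sigma$ contains an irreducible $A$-submodule. This is precisely the content of surjectivity of the forward map, and it is not automatic for an arbitrary $\C$-algebra $A$: cyclic modules over, say, $\C[x]$ need not contain a simple submodule. One has to argue that $\Res_A\Sigma$ is actually a direct sum of simple $A$-modules supported on a single $\Gamma$-orbit in $\Irr(A)$; for $\Gamma$ finite this follows once one knows $\Sigma$ has finite $A$-length (e.g.\ because $A\rtimes\Gamma$ is module-finite over the central subalgebra $A^\Gamma$ in the cases of interest), and for infinite $\Gamma$ with finite isotropy a little more care is needed. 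It is not a defect in the overall architecture of your argument, but in a self-contained write-up this point needs to be established rather than assumed.
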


Via the following result twisted extended quotients also arise from algebras of
invariants.

\begin{lem}\label{lem:B.7}
Let $\Gamma$ be a finite group acting on a $\C$-algebra $A$. There is a bijection 
\[
\begin{array}{ccc}
\{ V \in \Irr (A \rtimes \Gamma) : V^\Gamma \neq 0 \} & \longleftrightarrow & 
\Irr (A^\Gamma) \\
V & \mapsto & V^\Gamma .
\end{array}
\]
If all elements of $\Irr (A)$ have countable dimension, it becomes
\[
\begin{array}{ccc}
\{ (\pi_x,\tau) \in (\Irr (A) \q \Gamma)_\natural : (V_x \otimes V_\tau)^{\Gamma_x} \neq 0 \} 
& \longleftrightarrow & \Irr (A^\Gamma) \\
(\pi_x,\tau) & \mapsto & (V_x \otimes V_\tau)^{\Gamma_x} .
\end{array}
\]
\end{lem}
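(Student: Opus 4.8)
The plan is to establish the first bijection by a Clifford-theory argument, then transfer it through Lemma \ref{lem:B.6}(a) to obtain the second. For the first bijection: given $V \in \Irr(A \rtimes \Gamma)$ with $V^\Gamma \neq 0$, I would observe that $A^\Gamma$ acts on $V^\Gamma$ (since $A^\Gamma$ commutes with $\Gamma \subset A \rtimes \Gamma$, it preserves the space of $\Gamma$-invariants), and the first task is to show $V^\Gamma$ is irreducible as an $A^\Gamma$-module. The key input here is that $A \rtimes \Gamma$ and $A^\Gamma$ are related by a Morita-type correspondence: the element $e = |\Gamma|^{-1} \sum_{\gamma \in \Gamma} \gamma \in A \rtimes \Gamma$ is an idempotent with $e(A \rtimes \Gamma)e \cong A^\Gamma$ (identifying $eae$ with the averaging of $a$), and $V^\Gamma = eV$. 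So $V^\Gamma = eV$ is a module over $e(A\rtimes\Gamma)e \cong A^\Gamma$, and the standard fact that $W \mapsto eW$ gives a bijection between the simple $(A\rtimes\Gamma)$-modules not killed by $e$ and the simple $e(A\rtimes\Gamma)e$-modules yields exactly the asserted bijection, with inverse $W \mapsto (A\rtimes\Gamma)e \otimes_{A^\Gamma} W$.

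For the second formulation, I would feed the description of $\Irr(A \rtimes \Gamma)$ from Lemma \ref{lem:B.6}(a) into the bijection just established. An irreducible $V = \pi_x \rtimes \tau = \Ind_{A \rtimes \Gamma_x}^{A \rtimes \Gamma}(V_x \otimes V_\tau)$ has $V^\Gamma \cong (V_x \otimes V_\tau)^{\Gamma_x}$: indeed by Frobenius reciprocity $V^\Gamma = \Hom_{\C\Gamma}(\mathbf{1}, \Res^{A\rtimes\Gamma}_{\C\Gamma} V)$, and Mackey's formula applied to $\Res \Ind$ picks out, among the double cosets $\Gamma_x \backslash \Gamma / \Gamma$ (there is only one, since $\Gamma$ is a single coset on the right), the contribution $\Hom_{\C\Gamma_x}(\mathbf 1, V_x \otimes V_\tau) = (V_x \otimes V_\tau)^{\Gamma_x}$. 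Hence the condition $V^\Gamma \neq 0$ translates to $(V_x \otimes V_\tau)^{\Gamma_x} \neq 0$, and the map $V \mapsto V^\Gamma$ becomes $(\pi_x,\tau) \mapsto (V_x \otimes V_\tau)^{\Gamma_x}$, as claimed. The hypothesis that elements of $\Irr(A)$ have countable dimension is exactly what is needed to invoke Lemma \ref{lem:B.6}(a), so no extra work is required there.

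The main obstacle is the bookkeeping in the first step, specifically verifying that $e(A \rtimes \Gamma)e$ is genuinely isomorphic to $A^\Gamma$ as a $\C$-algebra (not merely that $eV = V^\Gamma$ set-theoretically) and that the classical ``corner ring'' bijection applies despite $A$ being a possibly non-unital or infinite-dimensional algebra. One must check that for $a \in A$ one has $eae = \bigl(|\Gamma|^{-1}\sum_\gamma \gamma \cdot a\bigr)e$ and that $a \mapsto |\Gamma|^{-1}\sum_\gamma \gamma\cdot a$ surjects onto $A^\Gamma$ with the right multiplicative behaviour; the characteristic-zero hypothesis makes $e$ an idempotent and this averaging well-behaved. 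Once the identification $e(A\rtimes\Gamma)e = A^\Gamma$ is in hand, the rest is the standard idempotent-truncation equivalence of module categories, restricted to simple objects, plus the Mackey/Frobenius computation above — all routine. I would also remark that irreducibility of $V^\Gamma$ over $A^\Gamma$ can alternatively be seen directly: any nonzero $A^\Gamma$-submodule $U \subseteq V^\Gamma$ generates an $(A\rtimes\Gamma)$-submodule $(A\rtimes\Gamma)U$ which, being nonzero and $V$ simple, equals $V$, whence $eV = e(A\rtimes\Gamma)U = e(A\rtimes\Gamma)e\,U = A^\Gamma U = U$, so $U = V^\Gamma$.
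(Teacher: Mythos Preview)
Your proposal is correct and follows essentially the same route as the paper: both use the averaging idempotent $p_\Gamma = |\Gamma|^{-1}\sum_\gamma \gamma$ to identify $A^\Gamma$ with the corner $p_\Gamma(A\rtimes\Gamma)p_\Gamma$, invoke the standard idempotent-truncation bijection on simples to get the first statement, and then apply Lemma~\ref{lem:B.6}(a) together with Frobenius reciprocity to rewrite $(\pi_x\rtimes\tau)^\Gamma$ as $(V_x\otimes V_\tau)^{\Gamma_x}$. Your additional direct argument for irreducibility of $V^\Gamma$ is a nice supplement but not needed beyond what the paper does.
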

\begin{proof}
Consider the idempotent 
\begin{equation}\label{eq:B.2}
p_\Gamma = |\Gamma|^{-1} \sum\nolimits_{\gamma \in \Gamma} \gamma \in \C [\Gamma] .
\end{equation}
It is well-known and easily shown that 
\[
A^\Gamma \cong p_\Gamma (A \rtimes \Gamma) p_\Gamma
\]
and that the right hand side is Morita equivalent with the two-sided ideal 
\[
I = (A \rtimes \Gamma) p_\Gamma (A \rtimes \Gamma) \subset A \rtimes \Gamma .
\]
The Morita equivalence sends a module $V$ over the latter algebra to
\[
p_\Gamma (A \rtimes \Gamma) \otimes_{(A \rtimes \Gamma) p_\Gamma
(A \rtimes \Gamma)} V = V^\Gamma . 
\]
As $I$ is a two-sided ideal, 
\[
\Irr (I) = \{ V \in \Irr (A \rtimes \Gamma) : I \cdot V \neq 0 \}  =
\{ V \in \Irr (A \rtimes \Gamma) : p_\Gamma V = V^\Gamma \neq 0 \}
\]
This gives the first bijection. From Lemma \ref{lem:B.6}.a we know that
every such $V$ is of the form $\pi_x \rtimes \tau$. With Frobenius reciprocity
we calculate
\[
(\pi_x \rtimes \tau)^\Gamma = \Big( \Ind_{A \rtimes \Gamma_x}^{A \rtimes \Gamma} 
(V_x \otimes V_\tau) \Big)^\Gamma \cong (V_x \otimes V_\tau)^{\Gamma_x}.
\]
Now Lemma \ref{lem:B.6}.a and the first bijection give the second.
\end{proof}

Let $A$ be a commutative $\C$-algebra all whose irreducible representations 
are of countable dimension over $\C$. Then $\Irr (A)$ consists of
characters of $A$ and is a $T_1$-space. Typical examples are $A = C_0 (X)$
(with $X$ locally compact Hausdorff), $A = C^\infty (X)$ (with $X$ a smooth
manifold) and $A = \mc O (X)$ (with $X$ an algebraic variety).
As a kind of converse to Lemmas \ref{lem:B.6} and \ref{lem:B.7}, we show that
many twisted extended quotients of $\Irr (A)$ appear as the space of 
irreducible representations of some algebra.

Let $\Gamma$ be a finite group acting on $A$ by algebra automorphisms. 
Let $\tilde \Gamma$ be a central extension of $\Gamma$ and let $\chi_\natural$
be a character of $Z := \ker (\tilde \Gamma \to \Gamma)$. For any (setwise)
section $\lambda : \Gamma \to \tilde \Gamma$, we get a 2-cocycle 
$\natural : \Gamma \times \Gamma \to \C^\times$ by
\[
\lambda (\gamma) \lambda (\gamma') = 
\natural (\gamma,\gamma') \lambda (\gamma \gamma') .
\]
In fact, up to coboundaries every 2-cocycle of $\Gamma$ arises in this way
\cite[\S 53]{CuRe}. Let 
\[
p_\natural := |Z|^{-1} \sum\nolimits_{z \in Z} \chi_\natural (z)^{-1} z \quad \in \C [Z]
\]
be the idempotent associated to $\chi_\natural$. It is central in 
$\C [\tilde \Gamma]$ and
\begin{equation}\label{eq:2.9}
p_\natural \C [\tilde \Gamma] \cong \C [\Gamma,\natural] .
\end{equation}
Lift the action of $\Gamma$ on $A$ to $\tilde \Gamma$ via the given projection.
The algebra $A \rtimes \tilde \Gamma = A \rtimes \C [\tilde \Gamma]$ contains
$A \rtimes p_\natural \C [\tilde \Gamma]$ as a direct summand. 

\begin{lem}\label{lem:B.1}
There is a bijection
\[
\begin{array}{ccc}
( \Irr (A) \q \Gamma )_\natural & \longleftrightarrow & 
\Irr ( A \rtimes p_\natural \C [\tilde \Gamma] ) \\
(\C_x , \tau) & \mapsto & \Ind_{A \rtimes \tilde{\Gamma}_x}^{A 
\rtimes \tilde \Gamma}(\C_x \otimes V_\tau) .
\end{array} 
\]
\end{lem}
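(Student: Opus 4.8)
The plan is to reduce Lemma \ref{lem:B.1} to the untwisted case already recorded in Lemma \ref{lem:B.6}.a, by passing from $A \rtimes \tilde\Gamma$ to its direct summand cut out by the central idempotent $p_\natural$. First I would apply Lemma \ref{lem:B.6}.a to the algebra $A$ and the finite group $\tilde\Gamma$ acting on it (through $\Gamma$); this requires only that $\Irr(A)$ has countable-dimensional modules and that the $\tilde\Gamma$-isotropy groups are finite, both of which hold since $\tilde\Gamma$ is finite. This yields a bijection
\[
(\Irr(A) \q \tilde\Gamma)_{\tilde\natural} \longleftrightarrow \Irr(A \rtimes \tilde\Gamma),
\qquad (\C_x,\tilde\tau) \mapsto \Ind_{A \rtimes \tilde\Gamma_x}^{A \rtimes \tilde\Gamma}(\C_x \otimes V_{\tilde\tau}),
\]
where $\tilde\natural$ is the cocycle attached to the $\tilde\Gamma$-action (which, since $\tilde\Gamma$ acts on $A$ through $\Gamma$, pulls back along $\tilde\Gamma_x \to \Gamma_x$ from a cocycle on $\Gamma_x$; because $\C_x$ is one-dimensional, the intertwiners can be taken to be scalars and this pulled-back cocycle is in fact trivial, so $\C[\tilde\Gamma_x,\tilde\natural_x] = \C[\tilde\Gamma_x]$).

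Next I would use the central idempotent $p_\natural \in \C[Z] \subset \C[\tilde\Gamma]$ to isolate the summand $A \rtimes p_\natural \C[\tilde\Gamma]$. Its irreducible modules are exactly those $V \in \Irr(A \rtimes \tilde\Gamma)$ on which $p_\natural$ acts as the identity, equivalently on which $Z$ acts through the character $\chi_\natural$. Under the bijection from Lemma \ref{lem:B.6}.a, the module $\Ind_{A \rtimes \tilde\Gamma_x}^{A \rtimes \tilde\Gamma}(\C_x \otimes V_{\tilde\tau})$ has $Z$ (a central subgroup of $\tilde\Gamma$, hence contained in every $\tilde\Gamma_x$) acting by the central character of $\tilde\tau$. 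So the condition $p_\natural V = V$ is precisely that $\tilde\tau \in \Irr(\C[\tilde\Gamma_x])$ has central character $\chi_\natural|_Z$ on $Z$. By \eqref{eq:2.9} (applied to the finite group $\tilde\Gamma_x$, its central subgroup $Z$, and the character $\chi_\natural$), the category of $\C[\tilde\Gamma_x]$-modules with that central character is equivalent to the category of $\C[\Gamma_x,\natural_x]$-modules, where $\natural_x$ is the restriction to $\Gamma_x$ of the cocycle $\natural$ defined from the section $\lambda$. This identifies the subset of $(\Irr(A) \q \tilde\Gamma)_{\tilde\natural}$ satisfying the idempotent condition with $(\Irr(A) \q \Gamma)_\natural$: the first coordinate $\C_x$ is unchanged (the $\tilde\Gamma$- and $\Gamma$-orbits of $x$ coincide since $Z$ acts trivially on $\Irr(A)$), and the second coordinate $\tilde\tau$ with prescribed $Z$-character becomes $\tau \in \Irr(\C[\Gamma_x,\natural_x])$.

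Finally I would check that the $\Gamma$-action data $\phi_{\gamma,x}$ used to form $(\Irr(A) \q \Gamma)_\natural$ match, under \eqref{eq:2.9}, the conjugation-by-$\gamma$ data that Lemma \ref{lem:B.6}.a uses for $\tilde\Gamma$, so that the two extended quotients really are the same set — this is where I expect the only genuine bookkeeping to lie, since one must track how the section $\lambda$ and the intertwiners $T_{\gamma,x}$ interact. Granting that compatibility, restricting the bijection of Lemma \ref{lem:B.6}.a to the summand cut out by $p_\natural$ gives
\[
(\Irr(A) \q \Gamma)_\natural \longleftrightarrow \Irr(A \rtimes p_\natural \C[\tilde\Gamma]),
\qquad (\C_x,\tau) \mapsto \Ind_{A \rtimes \tilde\Gamma_x}^{A \rtimes \tilde\Gamma}(\C_x \otimes V_\tau),
\]
where on the right $V_\tau$ denotes the $\C[\tilde\Gamma_x]$-module with $Z$-character $\chi_\natural|_Z$ corresponding to $\tau$ under \eqref{eq:2.9}; this is exactly the asserted bijection. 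The main obstacle is not any deep input but the careful matching of the cocycle and the algebra-isomorphism data $\phi_{\gamma,x}$ across the central extension $1 \to Z \to \tilde\Gamma \to \Gamma \to 1$; once that is in place the proof is a one-line restriction to a direct summand.
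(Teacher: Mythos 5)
Your proposal is correct and takes essentially the same route as the paper: apply Lemma \ref{lem:B.6} to $A$ with the group $\tilde\Gamma$, note that $Z \subset \tilde\Gamma_x$ for every $x$ since $Z$ acts trivially on $A$, and then restrict to the direct summand cut out by $p_\natural$, i.e.\ to modules on which $Z$ acts through $\chi_\natural$. The only cosmetic difference is that you invoke Lemma \ref{lem:B.6}.a and then argue the cocycle $\tilde\natural$ is trivial because the $A$-modules are one-dimensional, whereas the paper cites Lemma \ref{lem:B.6}.b directly (which is exactly that observation); the $\phi_{\gamma,x}$ compatibility you flag at the end is real bookkeeping, but the paper treats it as immediate and does not spell it out either.
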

\begin{proof}
Start with Lemma \ref{lem:B.6}.b for $A$ and $\tilde \Gamma$.
Since $Z$ acts trivially on $A$, it is contained in $\tilde{\Gamma}_x$ for
every $x \in \Irr (A)$. Now restrict to representations on which $Z$ acts
by $\chi_\natural$.
\end{proof}

\subsection{The Bernstein decomposition} \ 

We return to our reductive $p$-adic group $G$. Recall that an irreducible (smooth, 
complex) $G$-representation is called supercuspidal if it does not appear in any 
$G$-re\-pre\-sen\-ta\-tion induced from a proper Levi subgroup of $G$. Bernstein
\cite[\S 2]{BeDe} realised that an irreducible $G$-representation is supercuspidal
if and only if it is compact. Here compact means that the representation behaves
like one of a compact group, in the sense that all its matrix coefficients have
compact support modulo the centre of $G$. This observation enabled him to prove
that the supercuspidal representations generate a direct factor of the category
of smooth $G$-representations $\Rep (G)$.

That constitutes the first and most important step towards the Bernstein decomposition,
which we describe next. Let $P$ be a parabolic subgroup of $G$ and let $L$ be
a Levi factor of $P$. Let $\omega$ be a supercuspidal $L$-representation. (By
definition this entails that $\omega$ is irreducible.) We call $(L,\omega)$ a
cuspidal pair, and we consider such pairs up to inertial equivalence. This is
the equivalence relation generated by:
\begin{itemize}
\item unramified twists, $(L,\omega) \sim (L,\omega \otimes \chi)$ for $\chi \in 
X_\nr (L)$, where $X_\nr (L)$ is the group of unramified (not necessarily unitary)
characters $L \to \C^\times$;
\item $G$-conjugation, $(L,\omega) \sim (g L g^{-1},g \cdot \omega)$ for $g \in G$.
\end{itemize}
We denote a typical inertial equivalence class by $\fs = [L,\omega]_G$.
In particular 
\[
\fs_L := [L,\omega]_L = \{ \omega \otimes \chi \in \Irr (L) : \chi \in X_\nr (L) \} . 
\]
From $\fs$ Bernstein built a block in the category of smooth $G$-representations, 
in the following way. Denote the normalized parabolic induction functor by $I_P^G$. 
We define
\begin{align*}
& \Irr (G)^\fs = \{ \pi \in \Irr (G) : \pi \text{ is a constituent of } I_P^G (\omega 
\otimes \chi) \text{ for some } \omega \in \fs_L \} , \\
& \Rep (G)^\fs = \{ \pi \in \Rep (G) : \text{ every irreducible constituent of } \pi
\text{ belongs to } \Irr (G)^\fs \} .
\end{align*}
We denote the set of all inertial equivalence classes for $G$ by $\Omega (G)$.

\begin{thm}\label{thm:2.1} \textup{\cite[Proposition 2.10]{BeDe}} \\
The category of smooth $G$-representations decomposes as
\[
\Rep (G) = \prod\nolimits_{\fs \in \Omega (G)} \Rep (G)^\fs . 
\]
The space of irreducible $G$-representations is a disjoint union
\[
\Irr (G) = \bigsqcup\nolimits_{\fs \in \Omega (G)} \Irr (G)^\fs .
\]
\end{thm}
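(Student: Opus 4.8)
The plan is to establish the Bernstein decomposition in two stages: first the coarse decomposition of $\Rep(G)$ according to supercuspidal support, and then the refinement into blocks indexed by inertial equivalence classes. For the first stage, I would invoke Bernstein's key observation, already recalled in the excerpt, that an irreducible $G$-representation is supercuspidal if and only if it is compact (all matrix coefficients compactly supported modulo $Z(G)$). The crucial input is that for a parabolic $P = LU$ and a supercuspidal $L$-representation $\omega$, the parabolically induced representation $I_P^G(\omega)$ has all its irreducible constituents with the same supercuspidal support $[L,\omega]_G$ in the sense of the geometric lemma of Bernstein--Zelevinsky; and conversely every irreducible $\pi$ has a well-defined supercuspidal support by Jacquet's theorem on the existence of a cuspidal support for any admissible representation (Frobenius reciprocity plus the fact that Jacquet modules of admissible representations are admissible and nonzero for a minimal parabolic).

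First I would fix, for each $\fs = [L,\omega]_G \in \Omega(G)$, the full subcategory $\Rep(G)^\fs$ as defined in the statement, and show that $\Rep(G) = \prod_\fs \Rep(G)^\fs$ as a product of abelian categories. The heart of this is: (i) every smooth $G$-representation $\pi$ decomposes as a direct sum $\pi = \bigoplus_\fs \pi^\fs$ where $\pi^\fs$ is the largest subrepresentation all of whose irreducible subquotients lie in $\Irr(G)^\fs$; and (ii) there are no nonzero morphisms or extensions between objects of $\Rep(G)^{\fs_1}$ and $\Rep(G)^{\fs_2}$ for $\fs_1 \ne \fs_2$. Point (ii) for morphisms is immediate from disjointness of the $\Irr(G)^\fs$, which in turn follows once one knows the supercuspidal support of an irreducible representation is well-defined up to $G$-conjugacy and unramified twist (this uniqueness is exactly the Bernstein--Zelevinsky / Jacquet statement on cuspidal supports). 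The subtler point is (i) together with the extension-vanishing, which requires showing that the subcategories are not merely Serre subcategories but genuine direct factors; this is done via the theory of the Bernstein centre, or equivalently by exhibiting, for each $\fs$, a suitable projective generator (a progenerator) built from $\cInd_{P}^G$ applied to a compactly induced representation on $L$, and observing these progenerators have disjoint supports. I would assume the progenerator construction (Bernstein's second adjointness and the second-adjointness--based computation of $\End_G(\Pi_\fs)$) as available from \cite{BeDe}.

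For the statement about $\Irr(G)$, the disjoint-union decomposition then follows formally: an irreducible representation lies in exactly one $\Rep(G)^\fs$ by the product decomposition of $\Rep(G)$, hence belongs to exactly one $\Irr(G)^\fs$, and conversely every $\Irr(G)^\fs$ is nonempty and consists of irreducibles by construction. To see that the union exhausts $\Irr(G)$, one uses that any $\pi \in \Irr(G)$ embeds (by Frobenius reciprocity applied to a nonzero Jacquet module along a minimal parabolic, then building up) as a constituent of some $I_P^G(\omega)$ with $\omega$ supercuspidal, placing $\pi$ in $\Irr(G)^{[L,\omega]_G}$.

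The main obstacle, and the place where all the real work sits, is establishing that the $\Rep(G)^\fs$ are \emph{direct} factors rather than just Serre subcategories closed under subquotients --- equivalently, the vanishing of extensions between distinct blocks. The clean way is the Bernstein centre: one identifies the centre of the category $\Rep(G)$ with a product of rings $\prod_\fs \cZ^\fs$, where each $\cZ^\fs$ is (roughly) the ring of regular functions on the quotient of the torus $X_\nr(L)$ by the finite stabiliser of $\omega$; the orthogonal idempotents in this centre then split $\Rep(G)$ as the asserted product. Carrying this out rigorously needs Bernstein's finiteness results (admissibility is preserved, the Hecke algebra is Noetherian, finite generation of the relevant modules) and the computation of the endomorphism algebras of the progenerators via second adjointness. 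Since the excerpt permits citing \cite{BeDe}, I would present this last and most technical step by reference, giving only the structural outline above and the explicit description of the blocks in terms of inertial classes.
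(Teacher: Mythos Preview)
The paper does not give its own proof of this theorem: Theorem~2.1 is stated with a citation to \cite[Proposition 2.10]{BeDe} and nothing more. So there is no argument in the paper to compare your proposal against.

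That said, your outline is a reasonable and essentially correct sketch of the standard proof as it appears in Bernstein--Deligne. The structure you describe --- well-definedness of cuspidal support via Jacquet modules and the geometric lemma, construction of progenerators for each inertial class, and the identification of the categorical centre to produce the splitting idempotents --- is exactly the architecture of the original argument. Your final paragraph even acknowledges that the hard step (direct-factor rather than Serre-subcategory) would be handled by reference to \cite{BeDe}, which is precisely what the paper does wholesale. In short: your proposal is correct in outline and matches the cited source; the paper simply chose not to reproduce any of it.
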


Let $\Irr_\cusp (L)$ be the set of supercuspidal $L$-representations, up to
isomorphism. For $\omega \in \Irr_\cusp (L)$ (and in fact for every irreducible
$L$-representation) the group
\[
X_\nr (L,\omega) := \{ \chi \in X_\nr (L) : \omega \otimes \chi \cong \omega \}
\]
is finite. Thus there is a bijection
\begin{equation}\label{eq:2.1}
X_\nr (L) / X_\nr (L,\omega) \to \Irr (L)^{\fs_L} : \chi \mapsto \omega \otimes \chi ,
\end{equation}
which endows $\Irr (L)^{\fs_L}$ with the structure of a complex torus. Up to 
isomorphism this torus depends only on $\fs$, and it is known as the Bernstein 
torus $T_\fs$. We note that $T_\fs$ is only an algebraic variety, it is not endowed
with a natural multiplication map. In fact it does not even possess an unambigous
``unit'', because in general there is no preferred choice of an element $\omega \in
\fs_L$. 

Consider $W(G,L) = N_G (L) / L$, the ``Weyl'' group of $(G,L)$. It acts on 
$\Irr (L)$ by
\begin{equation}\label{eq:2.4}
w \cdot \pi = [ \bar w  \cdot \pi : l \mapsto \pi (\bar{w}^{-} l \bar w ) ]
\quad \text{for any lift } \bar w \in N_G (L) \text{ of } w \in W(G,L) .
\end{equation}
To $\fs$ Bernstein also associated the finite group
\begin{equation} \label{eqn: Ws}
W_\fs := \{ w \in W(G,L) : w \cdot \Irr (L)^{\fs_L} = \Irr (L)^{\fs_L} \} .
\end{equation}
It acts naturally on $T_\fs$, by automorphisms of algebraic varieties.

Closely related to the Bernstein decomposition is the theory of the Bernstein
centre. By \cite[Th\'eor\`eme 2.13]{BeDe} the categorical centre of the 
Bernstein block $\Rep^\fs (G)$ is
\begin{equation}\label{eq:2.2}
Z( \Rep (G)^\fs) \cong \mc O (T_\fs )^{W_\fs} = \mc O (T_\fs / W_\fs) . 
\end{equation}
Here $\mc O$ stands for the regular functions on an affine variety.
Moreover the map 
\begin{equation}\label{eq:2.3}
\text{\bf sc} : \Irr (G)^\fs \to T_\fs / W_\fs
\end{equation}
induced by \eqref{eq:2.2} is surjective and has finite fibers \cite[\S 3]{BeDe}.
Theorem \ref{thm:2.1} implies that every $\pi \in \Irr (G)$ is a constituent
of $I_P^G (\omega)$, where $[L,\omega]_G$ is uniquely determined. By 
\eqref{eq:2.2} the supercuspidal $L$-representation $\omega \in T_\fs$ is in fact 
uniquely determined up to $W_\fs$. The map $\pi \mapsto W_\fs \omega$ is just 
\textbf{sc}, and for this reason it is called the cuspidal support map.
Via this map $\Irr^\fs (G)$ can be regarded as a non-separated algebraic
variety lying over $T_\fs / W_\fs$.

\subsection{Geometric structure of Bernstein components} \

Let $\fs = [L,\omega]_G$ be an inertial equivalence class for $G$. Based on
many examples, we believe that the geometric structure of the component
$\Irr^\fs (G)$ of $\Irr (G)$ is related to its Bernstein centre $\mc O 
(T_\fs / W_\fs)$ in a strikingly simple and precise way.

Let $W_{\fs,t}$ be the stabilizer in $W_\fs$ of a point $t \in T_\fs$.

\begin{conj}\label{conj:2}
There exists a family of 2-cocycles
\[
\natural_t : W_{\fs,t} \times W_{\fs,t} \to \C^\times \qquad t \in T_\fs ,
\]
and a bijection
\[
\Irr (G)^\fs \longleftrightarrow (T_\fs \q W_\fs )_\natural 
\]
such that:
\begin{itemize}
\item It restricts to a bijection between tempered representations and 
the unitary part of the extended quotient (as explained below).
\item The bijection is canonical up to permutations within L-packets. That
is, for any $\phi \in \Phi (G)$, the image of $\Pi_\phi (G) \cap \Irr^\fs (G)$ 
is canonically defined (assuming a LLC for $G$ exists).
\end{itemize}
\end{conj}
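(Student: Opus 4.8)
\emph{Overall strategy and reduction to a Hecke algebra.} The plan is to transport the whole problem to the representation theory of the Hecke algebra attached to an $\fs$-type, and then to combine the (conjectural) structure theory of such algebras with Lemmas~\ref{lem:B.6} and~\ref{lem:B.1}. By the theory of types of Bushnell and Kutzko one first produces an $\fs$-type: a pair $(K,\lambda)$ with $K\subset G$ compact open and $\lambda\in\Irr(K)$ such that the associated idempotent gives an equivalence between $\Rep(G)^\fs$ and the category of (right) $\cH(G,\lambda)$-modules, where $\cH(G,\lambda)$ is the $\lambda$-spherical Hecke algebra. This restricts to a bijection $\Irr(G)^\fs\leftrightarrow\Irr\big(\cH(G,\lambda)\big)$. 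The existence of $(K,\lambda)$ in full generality is itself still open, but it is known in very many cases (e.g.\ via the constructions of Kim--Yu); one treats it here as a working hypothesis, exactly parallel to Conjecture~\ref{conj:3}.

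\emph{Structure of $\cH(G,\lambda)$ and extraction of the extended quotient.} Conjecture~\ref{conj:3} asserts that, up to Morita equivalence, $\cH(G,\lambda)$ is an extended affine Hecke algebra $\cH\rtimes\C[\fR_\fs,\natural_\fs]$, where $\cH$ is an affine Hecke algebra whose underlying torus is $T_\fs$, whose finite Weyl group $W_\fs^\circ$ is generated by the reflections along the roots at which the $\mu$-function of the family $\{I_P^G(\omega\otimes\chi):\chi\in X_\nr(L)\}$ degenerates, whose parameters $q_\fs$ are (in general unequal) powers of the residue cardinality, $W_\fs=W_\fs^\circ\rtimes\fR_\fs$, and $\natural_\fs\in H^2(\fR_\fs,\C^\times)$. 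Since Morita equivalence preserves $\Irr$, it remains to describe $\Irr\big(\cH\rtimes\C[\fR_\fs,\natural_\fs]\big)$. For the affine Hecke algebra $\cH$ one invokes Solleveld's classification of irreducible modules for arbitrary positive parameters: scaling $q_\fs\mapsto q_\fs^{\,\epsilon}$ with $\epsilon\in[0,1]$ gives a continuous family whose representation theory is rigid and whose endpoint $\epsilon=0$ is $\mc O(T_\fs)\rtimes W_\fs^\circ$, for which Lemma~\ref{lem:B.6}(b) yields $(T_\fs\q W_\fs^\circ)_2$. Transporting back along the deformation produces a bijection $\Irr(\cH)\leftrightarrow(T_\fs\q W_\fs^\circ)_{\natural'}$ for a family of 2-cocycles $\natural'_t$ on the isotropy groups $W^\circ_{\fs,t}$, nontrivial in general precisely because along the deformation reducibility at special points of $T_\fs$ can be ``less than expected''. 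Finally the argument of Lemma~\ref{lem:B.1} --- applied via Lemma~\ref{lem:B.6}(a) to $A=\cH$ and a central extension of $\fR_\fs$ realizing $\natural_\fs$ --- together with the transitivity of twisted extended quotients for the decomposition $W_\fs=W_\fs^\circ\rtimes\fR_\fs$, folds in the crossed product with $\C[\fR_\fs,\natural_\fs]$ and delivers the sought bijection $\Irr(G)^\fs\leftrightarrow(T_\fs\q W_\fs)_\natural$; the family $\natural_t$ on $W_{\fs,t}$ assembles $\natural'$, $\natural_\fs$, and the cocycle coming from the conjugation operators in the proof of Lemma~\ref{lem:B.1}.

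\emph{Temperedness and the statement about L-packets.} One arranges, as in the work of Bushnell--Kutzko, Barbasch--Moy and Solleveld, that the type is compatible with the natural $*$-structures, so that tempered $G$-representations correspond to tempered $\cH(G,\lambda)$-modules; by the Plancherel theory of affine Hecke algebras (Opdam, Delorme--Opdam) these are exactly the modules whose cuspidal support lies over the compact part $T_{\fs,u}\subset T_\fs$, i.e.\ the unitary part $(T_{\fs,u}\q W_\fs)_\natural$ of the extended quotient. For the L-packet assertion one assumes a LLC for $G$ (Conjecture~\ref{conj:1}): the map \textbf{sc} of \eqref{eq:2.3} is compatible with parabolic induction (properties (6) and (7) of the desiderata), so $\Pi_\phi(G)\cap\Irr(G)^\fs$ has a single image $W_\fs t$ under \textbf{sc} and hence lies in the fibre of $(T_\fs\q W_\fs)_\natural\to T_\fs/W_\fs$ over $W_\fs t$; the remaining label $\tau\in\Irr\C[W_{\fs,t},\natural_t]$ is then pinned down canonically by matching $\cS_\phi$-enhancements with $\C[W_{\fs,t},\natural_t]$-representations through the Galois-side extended quotient of Part~3.

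\emph{Main obstacle.} The hard part is the structure statement invoked in the second step: Conjecture~\ref{conj:3} is open in general, and even producing $W_\fs^\circ$, the parameters $q_\fs$ and the cocycle $\natural_\fs$ intrinsically from the cuspidal pair $(L,\omega)$ (rather than case by case) demands a good general theory of types together with a fine analysis of the intertwining operators and $\mu$-functions of the family $I_P^G(\omega\otimes\chi)$. Even granting Conjecture~\ref{conj:3}, one must then show that the cocycles $\natural_t$ produced by the scaling argument and by Lemma~\ref{lem:B.1} are canonical, i.e.\ independent of the auxiliary choices of $(K,\lambda)$ and of intertwining operators; this is precisely the delicate bookkeeping that the formalism of twisted extended quotients is designed to absorb, but it still has to be carried through. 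Likewise the $*$-compatibility needed for the tempered statement is not automatic and must be verified, or built into the choice of type.
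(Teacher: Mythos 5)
This is a conjecture, so the paper has no proof of it; what it offers is a list of cases where it is known and, in \S 2.5, a sketch of how the bijection should be built assuming Conjecture~\ref{conj:3}. Your overall framework — pass to $e_\fs \cH(G) e_\fs$ via a type or nice idempotent, invoke Conjecture~\ref{conj:3} for its structure as an (extended, twisted) affine Hecke algebra, and then read off the extended quotient from the representation theory of that algebra, with temperedness tracked through the compact form $T_{\fs,un}$ and the Plancherel theory — is exactly the paper's framework, and your identification of the open status of Conjecture~\ref{conj:3} as the main obstacle matches the paper's own assessment.

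Where you depart from the paper is in the mechanism for extracting the label $\rho \in \Irr\,\C[W_{\fs,t},\natural_t]$. You appeal to the $q_\fs \mapsto q_\fs^\epsilon$ scaling deformation and transport back from $\epsilon=0$. The paper explicitly says that this route, via \cite[\S 5.4]{Sol2}, only yields an \emph{earlier} version of the conjecture, phrased with the extended quotient of the first kind, and that ``to obtain Conjecture~\ref{conj:2} completely more work is required.'' Its own sketch proceeds differently: starting from the cuspidal support $t$, it endows the weight space $(e_\fs V_\pi)_t$ with a $W(R_\fs)_t$-action that extends (in general non-canonically) to a $\C[W_{\fs,t},\natural_\fs]$-module, and then isolates the irreducible $\rho$ by taking the sum of the $W(R_{\fs,t})$-subrepresentations of maximal $a$-weight, with $a$ Lusztig's function and the dictionary running through the (extended) Springer correspondence. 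It is precisely this truncation, and the fact that the underlying $W(R_{\fs,t})$-module depends canonically on $\pi$ while only its extension to $\C[W_{\fs,t},\natural_\fs]$ does not, that produces the ``canonical up to permutations within L-packets'' clause. Your sketch asserts that the remaining label is pinned down by matching with $\cS_\phi$-enhancements, but does not supply a device like the $a$-function truncation to actually achieve it; the scaling deformation on its own does not. Apart from this, your reading of the tempered part of the statement via \eqref{eq:2.5}--\eqref{eq:2.7} and of the L-packet clause via Borel's desiderata (5)--(7) and the cuspidal support map agrees with the paper's discussion.
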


Let $\Irr_\cusp (L)$ be the set of supercuspidal $L$-representations.
It is stable under the $W(G,L)$-action \eqref{eq:2.4}. The definitions of
$W_\fs$ and of extended quotients imply that for a fixed Levi subgroup $L$
of $G$ there is a natural bijection
\begin{equation}
\bigsqcup\nolimits_{\fs = [L,\omega]_G} (T_\fs \q W_\fs )_\natural \to
\Big( \Irr_\cusp (L) \q W(G,L) \Big)_\natural .
\end{equation}
In view of Theorem \ref{thm:2.1}, Conjecture \ref{conj:2} can also be
formulated, more elegantly, in terms of a bijection
\begin{equation}\label{eq:2.8}
\Irr (G) \longleftrightarrow \bigsqcup\nolimits_L 
\Big( \Irr_\cusp (L) \q W(G,L) \Big)_\natural ,
\end{equation}
where $L$ runs through a set of representatives for the $G$-conjugacy classes
of Levi subgroups of $G$. In this version, our conjecture asserts that
$\Irr (G)$ is determined by a much smaller set of data, namely the supercuspidal 
representations of Levi subgroups $L$ of $G$, and the actions of the Weyl 
groups $W(G,L)$ on those.

We expect that the group cohomology classes $\natural_t \in H^2 (W_{\fs,t},
\C^\times)$ reflect the character of $Z (G^\vee_{\sc})^{\mb W_F}$ which via
the Kottwitz isomorphism \eqref{eq:1.8} determines how $G$ is an inner twist
of a quasi-split group. In particular $\natural$ should be trivial whenever $G$
is quasi-split. The simplest known example of a nontrivial cocycle involves a
non-split inner form of $\SL_{10}(F)$ \cite[Example 5.5]{ABPS4}. That example
also shows that it is sometimes necessary to use \emph{twisted} extended 
quotients in Conjecture \ref{conj:2}.

Recall \cite[\S III.1--III.2]{Wal} that a supercuspidal representation is 
tempered if and only if it is unitary. Let $T_{\fs,un}$ be the set of unitary
representations in $T_\fs$, a $W_\fs$-stable compact real subtorus. Let us
denote the group of unitary unramified characters of $L$ by $X_\unr (L)$.
Without loss of generality we may assume that the basepoint $\omega \in T_\fs$ 
is unitary. Then \eqref{eq:2.1} becomes a bijection
\[
X_\unr (L) / X_\nr (L,\omega) \to T_{\fs,un} : \chi \mapsto \omega \otimes \chi . 
\]
Let $X_\nr^+ (L)$ be the group of unramified characters $L \to \R_{>0}$. 
The polar decomposition of $X_\nr (L)$ reads
\[
X_\nr (L) = X_\unr (L) \times X_\nr^+ (L) . 
\]
Since $X_\nr (L,\omega)$ is finite and $\R_{>0}$ has no nontrivial finite 
subgroups, $X_\nr (L,\omega) \cap X_\nr^+ (L) = \{1\}$. Hence the canonical map
\begin{equation}\label{eq:2.5}
T_{\fs,un} \times X_\nr^+ (L) \to T_\fs : 
(\sigma,\chi^+) \mapsto \sigma \otimes \chi^+
\end{equation}
is bijective. We regard \eqref{eq:2.5} as the polar decomposition of $T_\fs$.

Let $\Irr_\temp (G)$ be the set of irreducible tempered $G$-representations
(still considered up to isomorphism) and write
\[
\Irr_\temp (G)^\fs = \Irr (G)^\fs \cap \Irr_\temp (G) . 
\]
Conjecture \ref{conj:2} asserts that there is a bijection
\begin{equation}\label{eq:2.7}
\Irr_\temp (G)^\fs \longleftrightarrow (T_{\fs,un} \q W_\fs )_\natural . 
\end{equation}
In view of the $W_\fs$-equivariant polar decomposition \eqref{eq:2.5},
$(T_\fs \q W_\fs )_\natural$ is a natural way the complexification of its
compact real form $(T_{\fs,un} \q W_\fs )_\natural$. Similarly $\Irr^\fs (G)$ can 
be regarded as the ``complexification'' of $\Irr_\temp (G)^\fs$ \cite[\S 2]{ABPS1}.
If we manage to construct a bijection \eqref{eq:2.7} with suitable properties, then 
the method of \cite[\S 4]{ABPS1} shows that it extends to a bijection
$\Irr (G)^\fs \longleftrightarrow (T_\fs \q W_\fs )_\natural$ with the same
properties. Thus it suffices to prove Conjecture \ref{conj:2} for tempered 
representations.\\

\textbf{Example.}
Consider $G = \GL_2 (F)$ with the standard diagonal torus $T$. Let
$\fs = [T,\mathrm{triv}_T]_G$. Then 
\[
T_\fs = X_\nr (T) \cong (\C^\times )^2
\]
and $W_\fs = \{ 1, \matje{0}{1}{1}{0} \}$, acting on $T_\fs$ by permutations
of the two coordinates. In this case all the 2-cocycles $\natural_t$ are trivial
and the extended quotient is
\[
( T_\fs \q W_\fs )_2 = T_\fs / W_\fs \times \{\mathrm{triv} \} 
\; \sqcup \; \{ ((z,z),\mathrm{sign}_{W_\fs}) : z \in \C^\times \}
\]
The bijection from Conjecture \ref{conj:2} is canonical:
\[
\begin{array}{c@{\; \longleftrightarrow \;}cl}
\Irr (G)^\fs & ( T_\fs \q W_\fs )_2  \\
I_B^G (z,z') & ((z,z'), \mathrm{triv}) & 
\hspace{1cm} z' \in \C^\times \setminus \{q_F z,q_F^{-1} z\} \\
L (I_B^G (q_F z,z)) & ((q_F z,z) , \mathrm{triv}) \\
St_G \otimes z^{\nu_F \circ \det} & ((z,z),\mathrm{sign}_{W_\fs})
\end{array}
\]
The description of $\Irr (\GL_2 (F))^\fs$ is well-known, a clear account
of it can be found in \cite[\S 17]{BuHe}. To write it down we used
\[
\begin{array}{lll}
B & = & \text{standard Borel subgroup, the upper triangular matrices in } \GL_2 (F), \\
q_F & = & |k_F| \text{, cardinality of the residue field of } F, \\
L(\pi) & = & \text{Langlands quotient of the parabolically induced representation } \pi, \\
St_G & = & \text{Steinberg representation of } G , \\
\nu_F & = & \text{discrete valuation of the field } F .
\end{array}
\]
\textbf{Example.}
Take $G = \SL_2 (F)$, and the other notations as above but for $\SL_2 (F)$. Now
\[
T_\fs \to \C^\times : \chi \mapsto \chi \Big( \matje{\varpi_F}{0}{0}{\varpi_F^{-1}} \Big) 
\]
is a bijection, for any uniformizer $\varpi_F$ of $F$. The group $W_\fs = \{1,w\}$
acts on $T_\fs$ by $w \cdot z = z^{-1}$. The relevant extended quotient is
\[
(T_\fs \q W_\fs )_2  = T_\fs / W_\fs \times \{\mathrm{triv} \} 
\; \sqcup \; \{ (\pm 1,\mathrm{sign}_{W_\fs})  \}
\]
It is in bijection with $\Irr (G)^\fs$ via
\[
\begin{array}{c@{\; \longleftrightarrow \;}cl}
\Irr (G)^\fs & ( T_\fs \q W_\fs )_2 \\
I_B^G (z) & (z, \mathrm{triv}) & 
\hspace{1cm} z \in \C^\times \setminus \{-1,q_F ,q_F^{-1}\} \\
L (I_B^G (q_F)) & (q_F , \mathrm{triv}) \\
St_G & (1,\mathrm{sign}_{W_\fs}) \\
I_B^G (-1) = \pi_+ \oplus \pi_- & 
\{ (-1,\mathrm{triv}_{W_\fs}),(-1,\mathrm{sign}_{W_\fs}) \}
\end{array}
\]
Notice that the unramified character $\matje{a}{0}{0}{a^{-1}} \mapsto (-1)^{\nu_F (a)}$
gives rise to an L-packet with two irreducible $G$-representations, denoted $\pi_\pm$.
Both must be mapped to a point in the extended quotient, lying over 
$-1 \in T_\fs / W_\fs$. There are two ways to do so, both equally good. There does not 
seem to be a canonical choice without specifying additional data, see 
\cite[Example 11.3]{ABPS5}.
\vspace{3mm}

At the time of writing, Conjecture \ref{conj:2} has been proven in the following cases.
\begin{itemize}\label{casesABPS}
\item General linear groups over division algebras \cite{ABPS4,ABPS6}.
\item Special linear groups over division algebras \cite{ABPS4,ABPS6}.
\item Split orthogonal and symplectic groups \cite[\S 5]{Mou}. 
\item Principal series representations of split groups \cite{ABPS7}, 
\cite[\S 18--19]{ABPS5}.
\end{itemize}

\subsection{Hecke algebras for Bernstein blocks} \

We will explain some of the ideas that lead to the proof of Conjecture \ref{conj:2}
in the aforementioned cases. Let $\cH (G)$ be the Hecke algebra of $G$, that is,
the vector space $C_c^\infty (G)$ of locally constant compactly supported functions
on $G$, endowed with the convolution product. It is the version of the group algebra
of $G$ which is most suitable for studying smooth representations. The category
$\Rep (G)$ is naturally equivalent with the category $\Rep (\cH (G))$ of 
$\cH (G)$-modules $V$ such that $\cH (G) \cdot V = V$. (The latter condition is
nontrivial because $\cH (G)$ does not have a unit if $G \neq 1$.)

In these terms the Bernstein decomposition becomes
\begin{equation}\label{eq:2.10}
\begin{array}{lll}
\cH (G) & = & \bigoplus_{\fs \in \Omega (G)} \cH (G)^\fs , \\
\Rep (G) & \cong & \bigoplus_{\fs \in \Omega (G)} \Rep (\cH (G)^\fs ) , \\
\Irr (G) & = & \bigsqcup_{\fs \in \Omega (G)} \Irr (\cH (G)^\fs ) .
\end{array} 
\end{equation}
In other words, $\Rep (\cH (G)^\fs )$ is a Bernstein block for $G$.
Unfortunately, the algebras $\cH (G)^\fs$ are in general too large to work well with.
To perform interesting computations, one has to downsize them. The most common
approach is due to Bushnell and Kutzko \cite{BuKu1,BuKu2}. They propose to look
for suitable idempotents $e_\fs \in \cH (G)$ such that:
\begin{itemize}
\item $\cH (G)^\fs = \cH (G) e_\fs \cH (G)$, and this is Morita equivalent with 
$e_\fs \cH (G) e_\fs$ via the map $V \mapsto e_\fs V$;
\item $e_\fs \cH (G) e_\fs$ is smaller and simpler than $\cH (G)^\fs$. 
\end{itemize}
Typically $e_\fs$ will be associated to an irreducible representation of a compact
open subgroup of $G$, then Bushnell and Kutzko call it a type for $\fs$. Yet
in some cases this might be asking for too much, so we rather not require that.

The challenge is to find an idempotent such that the structure of $e_\fs \cH (G) e_\fs$
is nice and explicit. Let us call such an $e_\fs$ a nice idempotent for $\fs$. 
In practice this means that $e_\fs \cH (G) e_\fs$ must be close to an affine Hecke 
algebra. Such algebras can be defined in several ways \cite{IwMa,Lus2}, here we
present a construction which is well-adapted to representations of $p$-adic groups.
Let $T$ a complex torus with character lattice $X^* (T)$. Let $R \subset X^* (T)$
be a root system, not necessarily reduced. The Weyl group $W(R)$ acts on $T, X^* (T), 
\mc O (T)$ and $R$. We also need a parameter function $q: R / W(R) \to \R_{>0}$.

\begin{defn}
The affine Hecke algebra $\cH (T,R,q)$ is the $\C$-algebra such that:
\begin{itemize}
\item As vector space it equals $\mc O (T) \otimes \C [W(R)]$.
\item $\mc O (T)$ is embedded as a subalgebra.
\item $\C [W(R)] = \mathrm{span}\{ N_w : w \in W(R) \}$ is embedded as the
Iwahori--Hecke algebra $\cH (W(R),q)$, that is, the multiplication is defined by
\[
\begin{array}{ll}
N_w N_v = N_{wv} & \text{if } \ell (w) + \ell (v) = \ell (wv) ,\\
\big( N_{s_\alpha} - q_\alpha^{1/2} \big) \big( N_{s_\alpha} + q_\alpha^{-1/2} \big) = 0 &
\text{for every simple reflection } s_\alpha .
\end{array}
\]
Here $\ell$ is the length function of $W(R)$ and $\alpha \in R$ is a simple root.
\item The commutation rules between $\mc O (T)$ and $\cH (W (R),q)$ are determined by
\[
f N_{s_\alpha} - N_{s_\alpha} s_\alpha (f) = 
(q_\alpha^{1/2} - q_\alpha^{-1/2}) \frac{f - s_\alpha (f)}{1 - \theta_{-\alpha}} .
\]
Here $f \in \mc O (T), \alpha$ is a simple root and $\theta_x \in \mc O (T)$ 
corresponds to $x \in X^* (T)$. (In fact the formula can be slightly more complicated
if $R$ contains a factor of type $C_l$, see \cite[\S 3]{Lus2}.)
\end{itemize}
\end{defn}

Notice that for the parameter function $q = 1$ we get 
\begin{equation}
\cH (T,R,1) = \mc O (T) \rtimes W(R) = \C [X^* (T) \rtimes W(R)] . 
\end{equation}
With Lemma \ref{lem:B.6}.b we obtain a natural bijection
\[
\Irr (\mc O (T) \rtimes W(R)) \longleftrightarrow (T \q W(R) )_2 . 
\]
The representations of affine Hecke algebras have been subjected to a lot of study,
see in particular \cite{Lus2,KaLu,Opd,Sol2}. As a result the representation theory
of $\cH (T,R,q)$ is understood quite well, and close relations between
$\Irr (\cH (T,R,q))$ and $\Irr (\cH (T,R,1)) \cong (T \q W(R) )_2$ are known.
This is the main source of extended quotients in the representation theory of
reductive $p$-adic groups.

Now we provide an overview of what is known about the structure of 
$e_\fs \cH (G) e_\fs$ in various cases.\\

\textbf{Iwahori--spherical representations.} \\
This is the classical case. Let $M$ be a minimal Levi subgroup of $G$  and 
$\fs = [M,\mathrm{triv}_M]_G$. Borel \cite{Bor1} showed that the idempotent
$e_I$ associated to an Iwahori subgroup $I$ is nice for $\fs$. By
\cite[\S 3]{IwMa} there is an algebra isomorphism 
\begin{equation}\label{eq:2.11}
C_c (I \backslash G / I) \cong e_I \cH (G) e_I \cong \cH (X_\nr (M),
R^\vee (G,M), q_I) ,
\end{equation}
where $R^\vee (G,M)$ is the system of coroots of $G$ with respect to the maximal
split torus in $Z(M)$ and $q_{I,\alpha} = \mathrm{vol}(I s_\alpha I) / 
\mathrm{vol}(I)$ for a simple reflection $s_\alpha$.\\

\textbf{Principal series representations of split groups.} \\
Suppose that $G$ is $F$-split and let $T$ be a maximal split torus of $G$.
Fix a smooth character $\chi_\fs \in \Irr (T)$ and put $\fs = [T,\chi_\fs]_G$,
so that 
\[
X_\nr (T) \to T_\fs : \chi \mapsto \chi \chi_\fs 
\]
is a homeomorphism. By \cite[Lemma 6.2]{Roc} there exist a root subsystem
$R_\fs \subset R^\vee (G,T)$ and a subgroup $\mf R_\fs \subset W_\fs$ such that
$W_\fs = W (R_\fs) \rtimes \mf R_\fs$.

\begin{thm}\label{thm:Roche} \textup{\cite[Theorem 6.3]{Roc}} \\
There exists a type for $\fs$ and an algebra isomorphism
\[
e_\fs \cH (G) e_\fs \cong \cH (T_\fs,R_\fs,q) \rtimes \mf R_\fs , 
\]
where $q_\alpha = |k_F|$ for all $\alpha \in R_\fs$.
\end{thm}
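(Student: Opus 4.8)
I would prove this along the lines of Bushnell--Kutzko's theory of types and covers \cite{BuKu1,BuKu2}: construct an explicit $\fs$-type $(J,\lambda)$, show that its Hecke algebra $\cH(G,\lambda) := e_\lambda \cH(G) e_\lambda$ has the asserted shape, and transport this to $e_\fs \cH(G) e_\fs$: once $(J,\lambda)$ is known to be an $\fs$-type, one may take $e_\fs$ to be the idempotent attached to $(J,\lambda)$, so that $e_\fs \cH(G) e_\fs = \cH(G,\lambda)$. Fix the character $\chi = \chi_\fs$ of the split maximal torus $T$, and let $T_1 \subseteq T$ be its maximal compact subgroup if $\chi$ has depth zero, and a suitable Moy--Prasad filtration subgroup of $T$ in general. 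Then $(T_1, \chi|_{T_1})$ is tautologically a type for $\fs_T := [T,\chi]_T$ in $\Rep(T)$, since $T$ is abelian and $\Rep(T)^{\fs_T}$ is precisely the subcategory of representations on which $T_1$ acts $\chi|_{T_1}$-isotypically. So the content of ``there exists a type for $\fs$'' is to manufacture a $G$-cover of $(T_1, \chi|_{T_1})$.

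First I would build $J$: generate it by $T_1$ together with, for each root $\alpha$ of $(G,T)$, a filtration piece $U_{\alpha, n(\alpha)}$ of the root subgroup $U_\alpha$, the level $n(\alpha)$ being governed by the ramification of $\chi \circ \alpha^\vee$ --- a ``large'', Iwahori-sized piece exactly when $\chi \circ \alpha^\vee$ is unramified (equivalently $\alpha^\vee \in R_\fs$), and a deeper piece otherwise --- and let $\lambda$ be the character of $J$ extending $\chi|_{T_1}$ and trivial on the unipotent generators. One must check that $\lambda$ is genuinely well-defined on $J$ and that $(J,\lambda)$ has an Iwahori factorization, relative to a parabolic $P = TU$ and its opposite, which is ``strongly splitting'', so that by the cover criterion of \cite{BuKu2} it is a cover of $(T_1, \chi|_{T_1})$ and hence an $\fs$-type. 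This is exactly where Roche's hypothesis on the residue characteristic $p$ (that $p$ avoid a small set of primes bad for $R_\fs$) enters, keeping the root-subgroup commutator bookkeeping under control.

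Granting the type, the next step is to compute $\cH(G,\lambda)$. A double-coset analysis shows that the space $\cH(G,\lambda)_g$ of $\lambda$-bi-equivariant functions supported on $JgJ$ is at most one-dimensional, and is nonzero precisely for $g$ running over a subgroup of $N_G(T)/T_1$ isomorphic to $\Lambda \rtimes W_\fs$ with $\Lambda \cong X_*(T)$; the abelian part yields the embedded subalgebra $\mc O(T_\fs) = \mc O(X_\nr(T))$. For each simple reflection $s_\alpha$ with $\alpha^\vee \in R_\fs$, a rank-one reduction --- to $\SL_2(F)$, $\PGL_2(F)$ or $\GL_2(F)$ according to the simple root --- identifies the corresponding two-dimensional subalgebra as an Iwahori--Hecke algebra whose quadratic relation has parameter $q_\alpha = |k_F|$, and a further computation produces the Bernstein commutation relations between $\mc O(T_\fs)$ and these generators. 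This exhibits $\cH(T_\fs, R_\fs, q)$ inside $\cH(G,\lambda)$ with $q_\alpha = |k_F|$ for every $\alpha \in R_\fs$.

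It remains to account for $\mf R_\fs$, the stabilizer in $W_\fs$ of the chosen positive system of $R_\fs$, for which $W_\fs = W(R_\fs) \rtimes \mf R_\fs$ by \cite[Lemma 6.2]{Roc}. Its elements normalize $\cH(T_\fs, R_\fs, q)$ because they preserve the relevant affine root data, and the last task is to choose basis elements $N_w$, $w \in \mf R_\fs$, satisfying the braid relations of $\mf R_\fs$ exactly --- i.e. the a priori $H^2(\mf R_\fs, \C^\times)$ obstruction must be shown to vanish --- which one arranges by a careful normalization of intertwining operators. Assembling the pieces gives $e_\fs \cH(G) e_\fs \cong \cH(G,\lambda) \cong \cH(T_\fs, R_\fs, q) \rtimes \mf R_\fs$. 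The main obstacle is the first construction: producing $(J,\lambda)$ and verifying the strongly-splitting cover property, where all the delicate $p$-adic structure theory (and the constraint on $p$) is concentrated; once the type is in hand, the Hecke-algebra computation and the splitting of the $\mf R_\fs$-extension are comparatively mechanical rank-one and bookkeeping arguments.
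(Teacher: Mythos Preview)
The paper does not supply a proof of this theorem at all: it is a survey, and the statement is simply quoted from Roche \cite[Theorem 6.3]{Roc} without argument. Your sketch is therefore not to be compared against anything in the present paper, but it is a faithful outline of Roche's own proof --- construction of a depth-preserving character $\lambda$ on a compact open $J$ built from root-subgroup filtrations, verification of the $G$-cover property for $(J,\lambda)$ over the torus type (this is where the restriction on the residue characteristic enters), and then a support computation for $\cH(G,\lambda)$ followed by rank-one reductions to identify the quadratic relations and the Bernstein relations, with the $\mf R_\fs$-part handled last. Nothing in your outline is wrong or missing at the level of a strategy summary.
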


\textbf{Level zero representations.} \\
These are $G$-representations which contain non-zero vectors fixed by the
pro-unipotent radical of a parahoric subgroup of $G$. For such representations
the algebra $e_\fs \cH (G) e_\fs$ can be determined via suitable reductive groups
over the residue field $k_F$ \cite[Theorem 7.12]{Mor}, see also \cite{Lus3}.
It turns out that, like Theorem \ref{thm:Roche}, $e_\fs \cH (G) e_\fs$ is of the 
form $\cH (T_\fs,R_\fs,q_\fs) \rtimes \C [\mf R_\fs,\natural_\fs]$ for suitable
$R_\fs, q_\fs$ and $\mf R_\fs$. In all examples of level zero Bernstein blocks
which have been worked out, the 2-cocycle $\natural_\fs$ of $\mf R_\fs$ is trivial. \\

\textbf{Symplectic and orthogonal groups.} \\
For any inertial equivalence class $\fs \in \Omega (G)$ Heiermann \cite{Hei1}
proved that $\cH (G)^\fs$ is Morita equivalent with $\cH (T_\fs,R_\fs,q_\fs)
\rtimes \mf R_\fs$, for suitable $R_\fs, q_\fs$ and $\mf R_\fs$. A type for
$\fs$ was constructed in \cite{MiSt}. It seems plausible that 
$e_\fs \cH (G) e_\fs \cong \cH (T_\fs,R_\fs,q_\fs) \rtimes \mf R_\fs$, but 
as far as we know this has not yet been checked.\\

\textbf{Inner forms of $\GL_n (F)$.} \\
Let $D$ be a division algebra with centre $F$. Every Levi subgroup of $G = \GL_m (D)$
is of the form $L = \prod_i \GL_{m_i}(D)^{e_i}$, where $\sum_i m_i e_i = m$. 
Fix $\omega \in \Irr_\cusp (L)$, of the form $\omega = \bigotimes_{i=1}^k 
\omega_i^{\otimes e_i}$, where $\omega_i \in \Irr_\cusp (\GL_{m_i}(D))$ is not
inertially equivalent with $\omega_j$ if $i \neq j$. Then $T_\fs \cong \prod_{i=1}^k 
(\C^\times)^{e_i}$, $R_\fs$ is of type $\prod_{i=1}^k A_{e_i - 1}$ and
\[
W_\fs = W(R_\fs) \cong \prod\nolimits_{i=1}^k S_{e_i},
\] 
where $W_\fs$ is the group defined in (\ref{eqn: Ws}).
\begin{thm}\label{thm:2.2} \textup{\cite{Sec,SeSt}} \\
There exist a type for $\fs$, a finite dimensional vector space $V$ and a
parameter function $q_\fs : R_\fs \to q^\N$ such that
\[
e_\fs \cH (G) e_\fs \cong \cH (T_\fs,R_\fs,q_\fs) \otimes \End_\C (V) .
\]
\end{thm}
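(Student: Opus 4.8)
The plan is to deduce the structure of $e_\fs \cH(G) e_\fs$ for $G = \GL_m(D)$ from the theory of simple types in the sense of Bushnell--Kutzko, adapted to inner forms of $\GL_n(F)$. First I would recall that for a single supercuspidal $\omega_i \in \Irr_\cusp(\GL_{m_i}(D))$ one has, by the work of S\'echerre and Stevens on the classification of cuspidal representations of $\GL_{m_i}(D)$ \cite{Sec}, a simple type $(J_i,\lambda_i)$ whose associated idempotent $e_i$ cuts out the corresponding Bernstein block; the crucial input is that the Hecke algebra $\cH(\GL_{m_i}(D), J_i, \lambda_i)$ is an affine Hecke algebra of type $A_{0}$ tensored with a matrix algebra, so the rigidity is controlled. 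The heart of the matter is the block for a Levi subgroup $L = \prod_j \GL_{m_j}(D)$ with $\omega = \bigotimes_i \omega_i^{\otimes e_i}$: here one builds a \emph{semisimple} type $(J_L,\lambda_L)$ on $L$, transfers it to a cover $(J,\lambda)$ in $G$ using the covering theory of \cite{BuKu2}, and sets $e_\fs$ to be the idempotent attached to $\lambda$.

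The key steps, in order, would be: (1) assemble the simple types $(J_i,\lambda_i)$ for each $\omega_i$ and form the maximal simple type for $\omega_i^{\otimes e_i}$ inside $\GL_{m_i e_i}(D)$; (2) verify, following \cite{SeSt}, that this maximal type exists and that its Hecke algebra is an affine Hecke algebra $\cH(T_i, A_{e_i-1}, q_i)$ for an explicit parameter $q_i \in q^\N$ depending on the torsion number and the reduced degree of $D$ over $F$; (3) take the tensor product over $i$ to get a type for $\fs_L$ on $L$ whose Hecke algebra is $\bigotimes_i \cH(T_i, A_{e_i-1}, q_i)$; (4) invoke the covering construction to obtain $(J,\lambda)$ with $e_\fs = e_\lambda$, so that $e_\fs \cH(G) e_\fs \cong \cH(\GL_m(D), J, \lambda)$; (5) identify this Hecke algebra, using the explicit generators-and-relations description of Hecke algebras of covers in the $\GL$ setting, with $\cH(T_\fs, R_\fs, q_\fs)$ where $R_\fs = \prod_i A_{e_i-1}$ as stated; and (6) absorb the multiplicity of $\lambda$ (the dimension of the underlying representation space, which is not one-dimensional in general) into the factor $\End_\C(V)$, noting that $e_\fs \cH(G) e_\fs$ is Morita equivalent — in fact, here, isomorphic after tensoring with matrices — to the bare affine Hecke algebra.

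The main obstacle I anticipate is step (5): showing that the Hecke algebra of the cover is \emph{exactly} the affine Hecke algebra with the specified (equal-on-each-factor but possibly unequal-across-factors) parameters, with no extra $C_l$-type complications and no twisting 2-cocycle on the component group. Because $W_\fs = W(R_\fs)$ is a genuine Weyl group with trivial $\mf R_\fs$ (all factors are of type $A$ and the $\omega_i$ are pairwise non-inertially-equivalent), there is no room for a nontrivial $\natural_\fs$; the subtlety is purely in pinning down the parameter function $q_\fs$, which requires a careful computation of the relevant Hecke operator supports — essentially the content of \cite{SeSt}. A secondary technical point is that $\lambda$ need not be one-dimensional (unlike in the split case where one often reduces to a character of a compact open subgroup), which is precisely why the clean statement involves $\otimes \End_\C(V)$ rather than a bare isomorphism; verifying this amounts to tracking the dimension of $\lambda$ through the covering construction and observing that it contributes only an overall matrix-algebra factor that commutes with the affine Hecke algebra structure.
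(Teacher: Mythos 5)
Your proposal is correct and follows exactly the route the paper itself uses: the statement is a citation of the type-theoretic results of S\'echerre \cite{Sec} (simple types for $\GL_m(D)$) and S\'echerre--Stevens \cite{SeSt} (semisimple types and covers), and your outline is precisely the structure of those proofs, including the key observations that all factors are of type $A$, the $\omega_i$ are pairwise inertially inequivalent so $W_\fs = W(R_\fs)$ with trivial component group and trivial twisting cocycle, and the $\End_\C(V)$ factor records the dimension of the type $\lambda$. One small notational slip in your step (3): the type whose Hecke algebra is $\bigotimes_i \cH(T_i, A_{e_i-1}, q_i)$ lives on the intermediate Levi $\prod_i \GL_{m_i e_i}(D)$, not on the cuspidal-support Levi $L$, whose cuspidal type has a commutative (Laurent polynomial) Hecke algebra since $W_{\fs_L}$ is trivial; this does not affect the overall argument.
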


\textbf{Inner forms of $\SL_n (F)$.} \\
Let $G = \SL_m (D)$, the kernel of the reduced norm map $\GL_m (D) \to F^\times$.
Every Levi subgroup of $G$ looks like $L = L' \cap \SL_m (D)$, where 
$L'=\prod_i \GL_{m_i}(D)^{e_i}$. Fix $\fs = [L,\omega]_G$ and choose an 
$\omega' \in \Irr_\cusp (L')$ which contains $\omega$. Then $R_\fs$ is, as above for 
$\omega'$, of type $\prod_{i=1}^k A_{e_i - 1}$, but $T_\fs$ and $W_\fs$ are modified 
compared to $\GL_m (D)$. An explicit description of $T_\fs$ may be found in 
\cite[Prop.~2.1]{ABPS6}. 

Write $M'=\prod_i\GL_{e_i m_i}(D)$ and let $P'$ be the parabolic subgroup of $\GL_m(D)$ 
generated by $L'$ and the upper triangular-block matrices. Then
\[
W_\fs = W(R_\fs) \rtimes \cR_\fs \cong \prod\nolimits_{i=1}^k S_{e_i} \rtimes \cR_\fs,
\]
with $\cR_\fs = W_\fs \cap N_{\GL_m(D)}(P'\cap M')/L'$.

\begin{thm}\label{thm:2.3} \textup{\cite[\S 4.4]{ABPS4}} \\
There exist a finite dimensional projective representation $V$ of $X_\nr (L,\omega)
\rtimes \cR_\fs$ and a nice idempotent $e_\fs$ for $\fs$, such that
\[
e_\fs \cH (G) e_\fs \cong \Big( \cH ( X_\nr (L) ,R_\fs, q_\fs) \otimes \End_\C (V)
\Big)^{X_\nr (L,\omega)} \rtimes \cR_\fs .
\]
Here $X_\nr (L,\omega) \rtimes \cR_\fs$ acts both on $\cH ( X_\nr (L) ,R_\fs, q_\fs)$ 
and on $\End_\C (V)$.
\end{thm}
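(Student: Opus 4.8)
\emph{Sketch of the intended argument.}
The plan is to descend from the known structure for $\widetilde G := \GL_m(D)$ (Theorem~\ref{thm:2.2}) along the normal inclusion $G = \SL_m(D) \triangleleft \widetilde G$, tracking two phenomena that are absent on the $\GL$-side: the identification of those unramified twists that become trivial on restriction to $G$, and the appearance of a genuine relative R-group $\cR_\fs$ together with an associated projective representation.

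First I would fix the $\GL$-side data. Write $\widetilde L = \prod_j \GL_{m_j}(D)$, so that $L = \widetilde L \cap G$, and pick $\widetilde\omega \in \Irr_\cusp(\widetilde L)$ with $\omega$ occurring in $\widetilde\omega\big|_L$ (after an unramified twist, to fix a basepoint). Theorem~\ref{thm:2.2} then supplies a semisimple type $(\widetilde K,\widetilde\lambda)$ for $\widetilde\fs := [\widetilde L,\widetilde\omega]_{\widetilde G}$ and an isomorphism $e_{\widetilde\fs}\cH(\widetilde G)e_{\widetilde\fs} \cong \cH(X_\nr(\widetilde L),R_{\widetilde\fs},\widetilde q)\otimes\End_\C(\widetilde V)$; recall that on the $\GL$-side $W_{\widetilde\fs} = W(R_{\widetilde\fs})$, with no R-group. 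Next, put $K = \widetilde K \cap G$ and decompose $\widetilde\lambda\big|_K$ into irreducibles; using Bushnell--Kutzko cover theory \cite{BuKu1,BuKu2} adapted to the pair $(G,\widetilde G)$ — as in the split case $\SL_n(F)$ — I would check that each constituent $\lambda$ is a type for a well-determined inertial class $\fs$, that the associated idempotent $e_\fs$ is a $G$-cover (hence nice), and that $\Rep(G)^\fs$ is faithfully recorded by $e_\fs\cH(G)e_\fs$. The bookkeeping here is to determine which constituent sees the prescribed $\fs$ and to pin down $W_\fs \subset W(G,L)$ together with its splitting $W_\fs = W(R_\fs)\rtimes\cR_\fs$, the root datum $(R_\fs,q_\fs)$ being inherited from the $\GL$-side.

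The heart is to realise $e_\fs\cH(G)e_\fs$ from $e_{\widetilde\fs}\cH(\widetilde G)e_{\widetilde\fs}$ by two operations. \emph{(i) Descent from $\widetilde G$ to $G$.} Because $\widetilde G/G\,Z(\widetilde G)$ is finite and $Z(\widetilde G)$ acts by a character on the block, the representations in $\Rep(\widetilde G)^{\widetilde\fs}$ restricting into $\Rep(G)^\fs$ form a single orbit under twisting by the unramified characters of $\widetilde L$ that are trivial on $L$; this forces the passage from $\widetilde L$ to $L$, and I would implement it on the algebra side as a fixed-point construction $\big(\cH(X_\nr(L),R_\fs,q_\fs)\otimes\End_\C(V)\big)^{X_\nr(L)}$ for a suitable action of $X_\nr(L)$ by algebra automorphisms, built from the unramified-twist automorphisms, trivial on $\cR_\fs$ and encoding the isotropy group $X_\nr(L,\omega)$ on the $\End_\C(V)$ factor. \emph{(ii) Extension by $\cR_\fs$.} The symmetries in $\cR_\fs = W_\fs/W(R_\fs)$, which fix $\omega$ only up to unramified twist and are invisible inside $\widetilde G$, contribute a crossed product $\rtimes\,\cR_\fs$. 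The genuinely subtle ingredient is the projective representation $V$: the operators intertwining $\omega$ with its twists by $X_\nr(L,\omega)$ and the operators implementing $\cR_\fs$ on the type satisfy the composition law only up to scalars, hence assemble into a projective representation of $X_\nr(L,\omega)\rtimes\cR_\fs$ with a definite $2$-cocycle; I would extract this $V$ from the intertwining operators furnished by \cite{Sec,SeSt} and by the restriction of $\widetilde\lambda$, and then verify that the two actions of $X_\nr(L)\rtimes\cR_\fs$ — on $\cH(X_\nr(L),R_\fs,q_\fs)$ and on $\End_\C(V)$ — are compatible, so that the invariant algebra crossed with $\cR_\fs$ is well-defined and isomorphic to $e_\fs\cH(G)e_\fs$.

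I expect step~(ii) together with the explicit identification of $V$ to be the main obstacle: controlling $\cR_\fs$, its action on both tensor factors, and the exact cohomology class of $V$ requires a hands-on study of intertwining operators and has no formal shortcut, in contrast to the $\GL$-case where $W_{\widetilde\fs}$ is already a Weyl group and no projective twist occurs. Once $V$, the compatibility of the two actions, and the invariance of $(R_\fs,q_\fs)$ under the passage to $G$ are established, assembling (i) and (ii) yields the claimed isomorphism.
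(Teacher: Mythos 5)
The paper itself does not prove Theorem~\ref{thm:2.3}; it records it as a citation to \cite[Theorem 4.13]{ABPS4}. So there is no in-text argument to compare against, and the relevant question is whether your sketch is a plausible outline of the argument in that reference. It is, at the level of strategy: fixing Sécherre--Stevens data for $\widetilde G=\GL_m(D)$, restricting along $G=\SL_m(D)\triangleleft\widetilde G$, encoding the unramified twists trivial on $L$ by an $X_\nr(L)$-fixed-point construction, and adjoining $\cR_\fs$ with a projective representation built out of intertwining operators is exactly the shape of the descent carried out there, and you correctly single out the projective $2$-cocycle as the essential new phenomenon relative to Theorem~\ref{thm:2.2}.

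One place where your sketch is too optimistic and would need repair is the claim, early on, that each irreducible constituent $\lambda$ of $\widetilde\lambda\big|_K$ is a type, that $e_\fs$ is a $G$-cover, and hence automatically nice. The paper deliberately avoids that claim: it introduces the weaker notion of a nice idempotent precisely because asking for a Bushnell--Kutzko type in this setting ``might be asking for too much'' (see the discussion preceding Conjecture~\ref{conj:3} and the formulation of Theorem~\ref{thm:2.3} itself, which asserts only a nice idempotent). Establishing the Morita equivalence $\cH(G)^\fs \sim e_\fs\cH(G)e_\fs$ directly — rather than through cover theory — is part of the content of \cite{ABPS4}, and you cannot get it for free from the $\GL$-side type. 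Relatedly, restricting $\widetilde\lambda$ to $K$ typically yields several constituents whose induced idempotents see different (conjugate) inertial classes, and keeping track of the intertwining among these constituents is exactly what produces the $\End_\C(V)$ factor and the cocycle; picking ``a constituent'' and asserting it is a type for a ``well-determined'' $\fs$ glosses over this.

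Your remaining description of steps (i) and (ii) is consistent with the cited source, and you are right that step (ii) — computing the projective representation of $X_\nr(L,\omega)\rtimes\cR_\fs$ and verifying the compatibility of the two actions of $X_\nr(L)\rtimes\cR_\fs$ — is the irreducible technical core, with no formal shortcut. As a sketch it is sound in outline; as a proof it would have to replace the cover-theoretic shortcut with the direct analysis of idempotents and intertwining operators carried out in \cite{ABPS4}.
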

The algebras appearing in Theorem \ref{thm:2.3} are quite a bit more general than
the previous ones. See \cite[\S 5]{ABPS4} for some examples of what can happen.

For instance, they need not be Morita equivalent to an affine Hecke algebra extended
by a finite group of automorphisms of the root system. That can already happen in
the split case $G = \SL_n (F)$ \cite[\S 11.8]{GoRo}.
Moreover, the projective action of $\cR_\fs$ on $V$ gives rise to a possibly 
nontrivial 2-cocycle of $\cR_\fs$. It is related to the character of
$Z (\SL_n (\C)) = Z( G^\vee_\sc )^{\mb W_F}$ that specifies $G$ as an inner twist
of $\SL_n (F)$, see \cite[Theorem 4.15]{ABPS4}.\\[2mm]

From a more general point of view, the algebra in Theorem \ref{thm:2.3} rather closely
resembles the shape of the Fourier transform of a component in the Schwartz algebra
of any reductive $p$-adic group $G$ \cite{Wal}. The main difference is that for the 
Schwartz algebra one has to replace $\mc O(T_\fs)$ by $C^\infty (T_{\fs,un})$. 

From $\fs = [L,\omega]_G, T_\fs$ and $W_\fs$ one can canonically deduce a root system 
$R_\fs$, namely the set of roots of $(G,Z(L)^\circ)$ for which the Harish--Chandra
$\mu$-function has a pole on $T_\fs$ \cite{H-C}. The group $W_\fs$ acts on the Weyl
chambers for $R_\fs$, and the stabilizer of a fixed positive chamber is a subgroup
$\cR_\fs \subset W_\fs$. Since $W(R_\fs)$ acts simply transitively on the collection 
of Weyl chambers, $W_\fs = W(R_\fs) \rtimes \cR_\fs$.
On the basis of the above, we expect:

\begin{conj}\label{conj:3}
Let $\fs = [L,\omega]_G$ be any inertial equivalence class and use the above notations.
There exist a parameter function $q_\fs : R_\fs \to \R_{>0}$,
a finite dimensional projective representation $V_\fs$ of 
$X_\nr (L,\omega) \rtimes \cR_\fs$, and a nice idempotent $e_\fs$ for $\fs$ such that
\[
e_\fs \cH (G) e_\fs \cong \Big( \cH ( X_\nr (L) ,R_\fs, q_\fs) \otimes \End_\C (V_\fs)
\Big)^{X_\nr (L,\omega)} \rtimes \cR_\fs .
\]
\end{conj}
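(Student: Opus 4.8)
The plan would be, since Conjecture~\ref{conj:3} is open, a strategy rather than a proof; one would treat a single inertial class $\fs = [L,\omega]_G$ at a time and realise $e_\fs\cH(G)e_\fs$ as the Hecke algebra of a type. The first step is to produce a \emph{nice idempotent}. Using the Bushnell--Kutzko theory of types and covers, one would start from a type $(K_L,\lambda_L)$ for $\fs_L = [L,\omega]_L$ inside $L$, construct a $G$-cover $(K,\lambda)$, and set $e_\fs = e_\lambda$. Then $e_\fs\cH(G)e_\fs \cong \cH(G,\lambda) := \End_{\cH(G)}\big(\ind_K^G \lambda\big)$, so the task becomes the explicit determination of $\cH(G,\lambda)$.

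Next one would pin down the support of $\cH(G,\lambda)$. Because $(K,\lambda)$ is a $G$-cover, the algebra is supported on the double cosets indexed by the subgroup of $N_G(L)$ stabilising $\fs_L$; concretely its support is parametrised by $T_\fs \rtimes W_\fs$. Inside $\cH(G,\lambda)$ one isolates the ``$L$-part''; after incorporating the finite group $X_\nr(L,\omega)$ and the finite-dimensional projective representation $V_\fs$ of $X_\nr(L)\rtimes\cR_\fs$ that measures the failure of $\lambda_L$ to extend to the full stabiliser of $\omega$, this part should take the shape $\big(\mc O(X_\nr(L))\otimes\End_\C(V_\fs)\big)^{X_\nr(L)}$ already visible in Theorems~\ref{thm:2.2} and~\ref{thm:2.3}. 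One then produces, for each simple reflection $s_\alpha$ of $W(R_\fs)$, an invertible element $N_{s_\alpha} \in \cH(G,\lambda)$ subject to a quadratic relation $(N_{s_\alpha} - q_\alpha^{1/2})(N_{s_\alpha} + q_\alpha^{-1/2}) = 0$, where $q_\alpha$ is read off by a rank-one reduction to the Levi subgroup of semisimple rank one spanned by $L$ and $\alpha$: it is governed by the reducibility points of $I_P^G(\omega\otimes\chi)$ in the $\alpha$-direction, equivalently by the poles of the Harish--Chandra $\mu$-function, which is the very data defining $R_\fs$. The braid relations and the Bernstein--Lusztig commutation relations with $\mc O(T_\fs)$ reduce to rank-one and rank-two sub-configurations, while the elements $N_r$ for $r\in\cR_\fs$ normalise $\cH(X_\nr(L),R_\fs,q_\fs)$ and, once the relevant projective cocycle is absorbed into $V_\fs$, realise the crossed product $\rtimes\,\cR_\fs$. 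Comparing presentations---or, more safely, matching the resulting module category with $\Rep(G)^\fs$---then yields the claimed isomorphism; via Lemma~\ref{lem:B.1} and the known comparison between $\cH(T,R,q)$ and $\cH(T,R,1) = \mc O(T)\rtimes W(R)$ this would also settle Conjecture~\ref{conj:2}.

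The main obstacle is twofold. First, the existence of a type---a nice idempotent---in complete generality is itself an open problem; it is known in every case listed after Conjecture~\ref{conj:2}, and under tameness hypotheses on $G$ and the residue characteristic, but a uniform construction covering all Bernstein components is not available, and without one the computation above cannot even be started. Second, even granting a type there is at present no uniform computation of $\cH(G,\lambda)$: in each known case the parameters $q_\fs$ and especially the projective representation $V_\fs$---equivalently the $2$-cocycles $\natural_t$ of Conjecture~\ref{conj:2} and their conjectural link with the Kottwitz parameter of $G$ as an inner twist---are extracted by explicit work inside the Bruhat--Tits building. It is precisely the passage from ``$\cH(G)^\fs$ is Morita equivalent to such an algebra'', which Heiermann's Plancherel-theoretic methods can sometimes deliver, to an honest isomorphism $e_\fs\cH(G)e_\fs \cong \big(\cH(X_\nr(L),R_\fs,q_\fs)\otimes\End_\C(V_\fs)\big)^{X_\nr(L)}\rtimes\cR_\fs$ realised by a single idempotent that seems to resist a general argument.
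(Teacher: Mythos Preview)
Your assessment is correct: Conjecture~\ref{conj:3} is genuinely open and the paper offers no proof of it. The conjecture is stated immediately after the survey of known cases (Iwahori--spherical representations, principal series of split groups, level zero, symplectic and orthogonal groups, inner forms of $\GL_n$ and $\SL_n$), introduced only by the phrase ``On the basis of the above, we expect''. There is thus no argument in the paper to compare your proposal against.

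That said, the strategy you outline---building a $G$-cover $(K,\lambda)$ of a type for $\fs_L$, analysing the support of $\cH(G,\lambda)$ via $T_\fs \rtimes W_\fs$, producing the quadratic generators $N_{s_\alpha}$ by rank-one reduction and reading $q_\alpha$ from the poles of the Harish--Chandra $\mu$-function, then absorbing the projective cocycle of $\cR_\fs$ into $V_\fs$---is precisely the template by which the known cases in the paper's list were established (Theorems~\ref{thm:Roche}, \ref{thm:2.2}, \ref{thm:2.3} and the cited work of Morris and Heiermann). Your identification of the two obstacles is also on target: the paper itself flags the second one explicitly for symplectic and orthogonal groups, noting that Heiermann proves Morita equivalence but that ``it seems plausible that $e_\fs \cH(G) e_\fs \cong \cH(T_\fs,R_\fs,q_\fs) \rtimes \mf R_\fs$, but as far as we know this has not yet been checked''. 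So your proposal is an accurate summary of the state of affairs and of the natural line of attack, not a gap or an error.
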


\subsection{Conjectural construction of the bijection} \

Let us return to Conjecture \ref{conj:2}. Whenever Conjecture \ref{conj:3} holds for 
$\fs$, one can apply \cite[\S 5.4]{Sol2}. This proves an earlier version of Conjecture
\ref{conj:2} for $\Irr^\fs (G)$ (formulated in terms of an extended quotient of the 
first kind, see \cite{ABPS2}). 
To obtain Conjecture \ref{conj:2} completely more work is
required, which has been carried out in the cases listed on page \pageref{casesABPS}.

Based on knowledge of the representation theory of affine Hecke algebras and assuming 
Conjecture \ref{conj:3}, we sketch how the bijection $\Irr (G)^\fs \to 
(T_\fs \q W_\fs )_\natural$ should be constructed. That is, we describe how the 
construction goes in the aforementioned known
cases, and we expect that something similar works in general.

As discussed around \eqref{eq:2.7} it suffices to construct
\begin{equation}\label{eq:2.6}
\Irr_\temp (G)^\fs \to (T_{\fs,un} \q W_\fs)_\natural .
\end{equation}
Let $(\pi ,V_\pi) \in \Irr_\temp (G)^\fs$.
\begin{itemize}
\item As we saw in \eqref{eq:2.3}, the cuspidal support of $\pi$ is an element
$\mathbf{sc}(\pi) \in T_\fs / W_\fs$. Choose a lift $\mathbf{sc}(\pi) \in T_\fs$ and let 
$t = \mathbf{sc}(\pi)_{un} \in T_{\fs,un}$ be its unitary part, obtained from the polar
decomposition \eqref{eq:2.5}. This $t$ will be the $T_\fs$-coordinate in the extended
quotient.
\item Let $e_\fs$ be as in Conjecture \ref{conj:3}, so $e_\fs V_\pi \in 
\Rep (e_\fs \cH (G) e_\fs)$. Recall from \eqref{eq:2.2} that 
\[
Z(e_\fs \cH (G) e_\fs) \cong Z (\Rep (G)^\fs ) \cong \mc O (T_\fs / W_\fs) .
\]
The algebra $e_\fs \cH (G) e_\fs$ contains $\mc O (T_\fs)$ as a subalgebra such that
$Z(e_\fs \cH (G) e_\fs) = \mc O(T_\fs)^{W_\fs}$. All the weights for the action of
$\mc O (T_\fs)$ on $e_\fs V_\pi$ are contained in $W_\fs \mb{sc}(\pi)$, which is a
subset of $W_\fs t X_\nr^+ (L)$. As vector spaces 
\[
e_\fs V_\pi = \bigoplus\nolimits_{w \in W_\fs / W_{\fs,t}} (e_\fs V_\pi )_{w t} , 
\]
where $(e_\fs V_\pi )_{w t}$ is the linear subspace of $e_\fs V_\pi$ on which 
$\mc O (T_\fs)$ acts by weights from $w t X_\nr^+ (L)$.
\item With involved techniques from affine Hecke algebras \cite{Lus2,Sol2} one can
endow $(e_\fs V_\pi )_t$ with a linear action of $W(R_\fs)_t$, the stabilizer of $t$
in $W(R_\fs)$. It extends to a
representation of $\C [W_{\fs,t},\natural_\fs]$, where the 2-cocycle $\natural_\fs$
is determined by the projective $\cR_\fs$-representation $V_\fs$ from Conjecture 
\ref{conj:3}. Define $\natural$ such that $\natural_t = \natural_\fs \big|_{W_{\fs,t}}$.
\item It remains to specify an irreducible representation of $\C [W_{\fs,t},\natural_\fs]$,
depending on $(e_\fs V_\pi )_t$. There are a root subsystem $R_{\fs,t}$ and a Weyl 
subgroup $W(R_{\fs,t}) \subset W(R_\fs)$. The Springer correspondence associates to every 
irreducible $W(R_{\fs,t})$-representation a unipotent orbit in some complex reductive 
group. The dimension of this orbit can be regarded as an invariant, which we call the 
$a$-weight of the representation, where $a$ is the function defined
by Lusztig in \cite{Lus4}. Let $m$ be the maximal $a$-weight 
appearing among the $W(R_{\fs,t})$-subrepresentations of $(e_\fs V_\pi)_t$, and let 
$V_\rho$ be the sum of the $W(R_{\fs,t})$-subrepresentations of $a$-weight $m$. It turns 
out that $(\rho,V_\rho)$ is an irreducible $\C [W_{\fs,t},\natural_\fs]$-representation.
\end{itemize}
Then \eqref{eq:2.6} sends $\pi \in \Irr_\temp (G)^\fs$ to $(t,\rho) \in 
(T_{\fs,un} \q W_\fs )_\natural$.\\

Obviously the construction of $\rho$ is very complicated, and it is hard to see just 
from the above sketch what is going on. We want to make the point that
Conjecture \ref{conj:2} is not about some mysterious bijection, but about a map which
we already know quite well.

Our construction also reveals some (conjectural) information about L-packets. Let
$G^\vee_{\fs,t}$ be (possibly disconnected) complex reductive group with maximal torus
$T_\fs$, root system $R_{\fs,t}$ and Weyl group $W_{\fs,t}$. The extension to $W_{s,t}$ 
of the Springer correspondence for $W(R_{s,t})$,
as in \cite[Theorem 4.4]{ABPS5}, associates to $(\rho,V_\rho)$ 
a unique unipotent class $u(\rho)$ in $G^\vee_{\fs,t}$. It still depends canonically on 
$\pi$, because the $W(R_{\fs,t})$-representation $(e_\fs V_\pi )_t$ does. Only the 
extension of $(e_\fs V_\pi )_t$ to a $\C[W_{\fs,t},\natural_\fs]$-representation need 
not be canonical.

In all examples the L-parameter of $\pi$ depends only on $(t,u(\rho))$, and
$\pi' \in \Irr_\temp (G)^\fs$ has the same L-parameter if and only if 
$W_\fs (t,u(\rho)) = W_\fs (t',u(\rho'))$. Therefore we believe that the bijection in
Conjecture \ref{conj:2} is canonical up to permutations within L-packets.

\section{Reduction to the supercuspidal case}

We discuss a strategy to reduce the construction of a LLC for irreducible smooth
representations to the case of supercuspidal representations. In view of the work
of V. Lafforgue \cite{Laf2,Laf3}, this could be useful in large generality.
(While this paper was under review, the material in this section has been worked
out in \cite{AMS}.)

If one assumes the bijective LLC (Conjecture \ref{conj:1}) for $G$ (considered
as in inner twist of a quasi-split group), then the Bernstein decomposition
of $\Irr (G)$ can be transferred to enhanced L-parameters:
\[
\Phi_e (G) = \bigsqcup\nolimits_{\fs \in \Omega (G)} \Phi_e (G)^\fs , 
\]
where $\Phi_e (G)^\fs$ is the set that parametrizes $\Irr (G)^\fs$. Fixing 
a character $\zeta_G$ of $Z(G^\vee_\sc)$ as in Paragraph \ref{par:bijective},
we obtain a similar decomposition of $\Phi_{e,\zeta_G}(G)$.

If we also assume Conjecture \ref{conj:2} for $\fs = [L,\omega]_G$, then 
$\Irr (G)^\fs$ is in bijection with a twisted extended quotient 
$(T_\fs \q W_\fs )_\natural$. By the conjectural LLC for supercuspidal 
representations of $L$, $T_\fs$ should be in bijection with 
\[
\Phi_{e,\zeta_G} (L)^{\fs_L} := \big\{ (\phi,\rho) \in \Phi_e (L)^{\fs_L} : 
\rho \big|_{Z(L^\vee_\sc)} = \zeta_G \big|_{Z(L^\vee_\sc)} \big\} .
\]
With the fifth desideratum of the LLC for $G$ and $L$, we get bijections
\begin{equation}\label{eq:3.1}
\Phi_{e,\zeta_G} (G)^\fs \longleftrightarrow \Irr (G)^\fs 
\longleftrightarrow (T_\fs \q W_\fs )_\natural
\longleftrightarrow (\Phi_{e,\zeta_G} (L)^{\fs_L} \q W_\fs )_\natural .
\end{equation}
If we can do this for all inertial equivalence classes $\fs \in \Omega (G)$, we 
even obtain a bijection
\[
\Phi_{e,\zeta_G} (G) \longleftrightarrow \bigsqcup\nolimits_{[L,\omega]_G 
= \fs \in \Omega (G)} \big( \Phi_{e,\zeta_G} (L)^{\fs_L} \q W_\fs \big)_\natural .
\]
Let $\Phi_\cusp (L)$ be the subset of $\Phi_e (L)$ which corresponds to $\Irr_\cusp 
(L)$. Again, its definition depends on Conjecture \ref{conj:1}. The same argument
as above can also be applied to the equivalent formulation \eqref{eq:2.8} of
Conjecture \ref{conj:2}. That leads to a bijection
\begin{equation}\label{eq:3.7}
\Phi_{e,\zeta_G} (G) \longleftrightarrow 
\bigsqcup\nolimits_L \big( \Phi_{\cusp,\zeta_G} (L) \q W(G,L) \big)_\natural , 
\end{equation}
where $L$ runs over the conjugacy classes of Levi subgroups of $G$.

In the upcoming paragraphs we will explain how to reformulate \eqref{eq:3.1} and
\eqref{eq:3.7} entirely in terms of complex reductive groups with Galois actions,
resulting in Conjecture \ref{conj:4}. That and Conjecture \ref{conj:2} should 
form the vertical maps in a commutative, bijective diagram
\begin{equation}\label{eq:3.9}
\xymatrix{
\Irr (G) \ar@{<->}[r]^{LLC} \ar@{<->}[d] & \Phi_{e,\zeta_G} (G) \ar@{<->}[d] \\
\bigsqcup_L \big( \Irr_\cusp (L) \q W(G,L) \big)_\natural \ar@{<->}[r] &
\bigsqcup_L \big( \Phi_{\cusp,\zeta_G} (L) \q W(G,L) \big)_\natural
}
\end{equation}
where both unions run over the same set of represenatives for the conjugacy
classes of Levi subgroups of $G$. The bottom map comes from the LLC for 
supercuspidal $L$-representations, taking desideratum (5) and Proposition 
\ref{prop:3.1} into account. With such a diagram one can try to establish the 
local Langlands correspondence for $G$.
This setup reduces the problem to three more manageable steps:
\begin{itemize}
\item Conjecture \ref{conj:2},
\item Conjecture \ref{conj:4},
\item the LLC for supercuspidal representations.
\end{itemize}
We note that this strategy was already employed to find the LLC for principal 
series representations of split reductive $p$-adic groups \cite[\S 16]{ABPS5}.
In that case the bottom line of the above diagram is a consequence of the
naturality of the LLC for (split) tori.

\subsection{Towards a Galois analogue of the Bernstein theory} \

We would like to rephrase \eqref{eq:3.1} and \eqref{eq:3.7} entirely on
the Galois side. To get started, one has to be able to detect when an enhanced
L-parameter is ``cuspidal'', without knowing the LLC. We note that it is 
impossible to define this properly for L-parameters, since there are L-packets
that contain both supercuspidal and non-supercuspidal representations. 
The enhancement of a L-parameter is essential for its nature.

In view of \cite[D\'efinition 4.11]{Mou}, the correct criterion should be that 
an enhanced L-parameter $(\phi,\rho) \in \Phi_e (G)$ is cuspidal if:
\begin{itemize}
\item $\phi \in \Phi (G)$ is discrete;
\item $\rho \in \Irr (\cS_\phi)$ is cuspidal in the sense of Lusztig's generalized
Springer correspondence \cite{Lus5}.
\end{itemize}
Let $\Phi_{\cusp} (G)$ denote the set of cuspidal (enhanced) L-parameters for $G$.

Furthermore a notion of ``cuspidal support'' of enhanced L-parameters
seems necessary, that is, a well-defined map from $\Phi_e (G)$ to cuspidal
enhanced Langlands parameters of Levi subgroups of $G$.
Such a notion was developed in \cite[\S 4.2.2]{Mou}, and worked
out completely for split classical groups in \cite[\S 4.2.3]{Mou}.

The desiderata of the Langlands correspondence show how ``inertial equivalence''
can be be formulated for L-parameters. Let $\mb I_F$ be the inertia subgroup of
$\mb W_F$ and let $\Fr_F \in \mb W_F$ be a Frobenius element, so that
\[
\mb W_F / \mb I_F \cong \langle \Fr_F \rangle \cong \Z . 
\]
By \cite[(3.3.2)]{Hai} there are natural isomorphisms
\begin{equation}\label{eq:3.8}
X_\nr (G) \cong \big( Z(G^\vee)^{\mb I_F} \big)^\circ_{\langle \Fr_F \rangle} 
\cong H^1_c \Big( \mb W_F / \mb I_F , Z(G^\vee)^{\mb I_F} \big)^\circ \Big) .
\end{equation}
We will denote a typical cuspidal L-parameter by $(\varphi,\varepsilon) \in 
\Phi_\cusp (L) \subset \Phi_e (L)$. In view of Borel's desideratum (2) for 
Conjecture \ref{conj:1}, \cite[5.3.3]{Hai} and \cite[Def.~4.15]{Mou},
we define $(L,\phi,\varepsilon)$, $(L',\varphi',\varepsilon')$ 
to be inertially equivalent (for
$G^\vee$) if there exist $g\in G^\vee$ and $z \in H^1_c \Big( \mb W_F / \mb I_F , 
\big( Z(L^\vee)^{\mb I_F} \big)^\circ \Big)$ such that 
\[
L^{\prime\vee} = {}^g L^\vee ,\;\;\varphi '= z \, {}^g\varphi 
,\;\;\varepsilon'= {}^g \varepsilon .
\]
We denote their inertial equivalence class by $\fs^\vee = 
[L^\vee,\varphi,\varepsilon]_{G^\vee}$, and we let $\Omega^\vee (G)$ 
be the collection of inertial equivalence classes.

The analogue of a Bernstein component in $\Phi_e (G)$ should be
\[
\Phi_e (G)^{\fs^\vee} = \{ (\phi,\rho) \in \Phi_e (G) : \text{ the cuspidal support
of } (\phi,\rho) \text{ lies in } \fs^\vee \} .
\]
Of course this is only meaningful if the cuspidal support of enhanced Langlands
parameters can be defined precisely. We expect that under the LLC 
$\Phi_{e,\zeta_G} (G)^{\fs^\vee}$ will be in bijection with $\Irr (G)^\fs$, where 
$\fs = [L,\omega]_G$ with $\omega \in \Irr_\cusp (L)$ corresponding to some
$(\varphi,\varepsilon) \in \fs^\vee$.

One may wonder how $W(G,L)$ 
acts on $\Phi_{\cusp} (L)$
in \eqref{eq:3.7}. That should come from the action of $N_{G^\vee}(L^\vee \rtimes 
\mb W_F)$ on $\Phi_e (L)$, via the next result.

\begin{prop}\label{prop:3.1}
Let $L$ be any Levi subgroup of $G$. There is a canonical isomorphism
\[
W(G,L) \cong N_{G^\vee}(L^\vee \rtimes \mb W_F) / L^\vee .
\]
\end{prop}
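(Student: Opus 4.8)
The plan is to produce mutually inverse maps between the two groups by keeping track of where parabolic and Levi subgroups go under the duality between $\cG$ and $G^\vee$. First I would fix a Levi subgroup $L$ arising as a Levi factor of an $F$-parabolic $P = LU$ of $G$, with $L = \cL(F)$. By the standard dictionary (already recalled in the paragraph before Proposition \ref{prop:1.2}, using \cite[Theorem 8.4.3]{Spr} and the bijection $R(\cG,\cT) \leftrightarrow R(G^\vee,T^\vee)$), the $\cG$-conjugacy class of $\cL$ determines, and is determined by, a $G^\vee$-conjugacy class of Levi subgroups of $G^\vee$. One has to be slightly careful because $L$ is an $F$-group: the correct statement is that $L$ corresponds to a $\mb W_F$-stable Levi subgroup $L^\vee \subset G^\vee$, together with the induced action of $\mb W_F$ on $L^\vee$, so that $L^\vee \rtimes \mb W_F$ is the Langlands dual group ${}^L L$ sitting inside ${}^L G = G^\vee \rtimes \mb W_F$. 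I would pin down $L^\vee$ as a specific representative and work with $N_{G^\vee}(L^\vee \rtimes \mb W_F)$, the normalizer of this subgroup of ${}^L G$.

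Next I would define the map $W(G,L) \to N_{G^\vee}(L^\vee \rtimes \mb W_F)/L^\vee$. Given $w \in W(G,L) = N_G(L)/L$, conjugation by a lift $\bar w \in N_G(\cL)$ is an $F$-automorphism of $\cL$ that permutes the $F$-parabolics of $G$ with Levi factor $\cL$ and, more to the point, induces an automorphism of the root datum of $(\cG,\cT)$ preserving the sub-root-datum attached to $\cL$. Dualizing the root datum, this produces an automorphism of the root datum of $(G^\vee,T^\vee)$ preserving that of $L^\vee$ and commuting with the $\mb W_F$-action (because $\bar w$ is $F$-rational, its effect on the based root datum commutes with the $*$-action of $\Gal(F_\sep/F)$ that defines the dual group). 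By the isogeny/normalizer theorems for reductive groups this automorphism is realized by conjugation by some $\check w \in N_{G^\vee}(L^\vee \rtimes \mb W_F)$, well-defined modulo $L^\vee$. Functoriality of dualization of root data gives that this is a group homomorphism, and independence of the choices (lift $\bar w$, representative $\check w$) is exactly the passage to the quotients $N_G(L)/L$ and $N_{G^\vee}(L^\vee\rtimes \mb W_F)/L^\vee$. For the inverse direction I would run the same argument backwards: an element of $N_{G^\vee}(L^\vee \rtimes \mb W_F)$ acts on ${}^L G$ fixing ${}^L L$, hence induces a $\mb W_F$-equivariant automorphism of the root datum of $(G^\vee,T^\vee)$ preserving $L^\vee$; dualizing and using $F$-rationality (here is where $\mb W_F$-equivariance becomes Galois-descent to an $F$-automorphism of $\cL$) gives an element of $N_G(\cL)/L$, hence of $W(G,L)$. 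The two constructions are visibly inverse to each other because dualization of root data is an involution.

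The main obstacle, and the step I would spend the most care on, is the $F$-rationality bookkeeping: showing precisely that a $\mb W_F$-equivariant automorphism of the (based) root datum of $G^\vee$ preserving $L^\vee$ descends to an automorphism of $\cL$ that is defined over $F$ and normalizes $\cL$ inside $\cG$ up to an inner automorphism coming from an element of $\cG(F)$ rather than merely $\cG(F_\sep)$. This is essentially the content of the fact that $L$ is a Levi subgroup of $G$ in the rational sense (Levi factor of an $F$-parabolic), not just geometrically, and one needs to invoke that $N_G(L) = N_{G(F)}(\cL)$ surjects onto the relevant group of root-datum automorphisms — a point where one uses that $\cL \supset \cT$ can be taken with $\cT$ a maximally split maximal torus and that the relative Weyl group computations of \cite[\S 8]{Bor} apply. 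I would also need to check that no "extra" symmetries appear on the dual side: a priori $N_{G^\vee}(L^\vee \rtimes \mb W_F)$ could be larger than the image of $W(G,L)$, and ruling this out amounts to the statement that every $\mb W_F$-equivariant automorphism of the based root datum fixing the sub-datum of $L^\vee$ is realized rationally, which again is the rational Levi condition read on the dual side. Once this rationality dictionary is set up cleanly, the homomorphism property and bijectivity are formal consequences of the duality of root data, so the proposition follows.
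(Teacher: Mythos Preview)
Your plan is in the right spirit but takes a noticeably different route from the paper, and the step you flag as ``the main obstacle'' is precisely where the paper's argument is sharper. You propose to build mutually inverse maps by passing through automorphisms of root data: lift $w\in W(G,L)$ to an automorphism of the root datum, dualize, and realize it by conjugation in $G^\vee$; then run this backwards. The paper instead constructs a single chain of explicit isomorphisms of finite groups, never needing to verify that two constructions are inverse to each other. Concretely, it first rewrites $W(G,L)$ as $\mathrm{Stab}_{W(G,S)}(L)/W(L,S)$ for $S$ a maximal $F$-split torus of $L$ (using conjugacy of maximal split tori in $L$), then expresses the relative Weyl group $W(G,S)$ as a quotient of $\mb W_F$-invariants in the absolute Weyl group $W(\cG,\cT)$, and finally uses the tautological identification $W(\cG,\cT)\cong W(G^\vee,T^\vee)$ to land in $\mathrm{Stab}_{W(G^\vee,T^\vee)^{\mb W_F}}(R(L^\vee,T^\vee))/W(L^\vee,T^\vee)^{\mb W_F}$. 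A last step (conjugacy of maximal tori in the complex group $L^\vee$) identifies this with $N_{G^\vee}(L^\vee\rtimes\mb W_F)/L^\vee$.

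What this buys: your $F$-rationality worry --- that a $\mb W_F$-equivariant automorphism on the dual side should come from something defined over $F$ --- dissolves once you pass through the relative Weyl group, because the identity $W(G,S)\cong W(\cG,\cT)^{\mb W_F}/W(Z_\cG(\cS),\cT)^{\mb W_F}$ (from \cite[\S 15.3]{Spr}) is exactly the statement that $\mb W_F$-invariance upstairs matches $F$-rationality downstairs. Your approach would also need this fact, but packaged inside a Galois-descent argument you did not spell out. A second point you glossed over: an element of $N_G(L)$ need not normalize $\cT$, and an element of $N_{G^\vee}(L^\vee\rtimes\mb W_F)$ need not normalize $T^\vee$, so neither side acts on the root datum without first adjusting by an element of $L$ (resp.\ $L^\vee$). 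The paper handles both adjustments explicitly via conjugacy of tori; in your outline this is hidden in the phrase ``well-defined modulo $L^\vee$''. None of this is fatal --- your plan can be made to work --- but the paper's Weyl-group chain is both shorter and makes the canonical nature of the isomorphism transparent.
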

\begin{proof}
First we reformulate $W(G,L)$ in terms of the root datum of $\cG$.
Let $S = \cS (F)$ be a maximal $F$-split torus in $L = \mc L (F)$. The relative
(with respect to $F$) Weyl group of $G = \cG (F)$ is 
\[
W(G,S) = N_G (S) / Z_G (S) .
\]
Both the canonical maps 
\[
(\mathrm{Stab}_{N_G (S)} (L) / Z_G (S) ) \Big/ (N_L (S) / Z_L (S)) \to
\mathrm{Stab}_{N_G (S)}(L) / N_L (S) \to N_G (L) / L
\]
are bijective, the last one because all maximal $F$-split tori in $L$
are $L$-conjugate \cite[Theorem 15.2.6]{Spr}. In other words,
\begin{equation}\label{eq:3.2}
\mathrm{Stab}_{W(G,S)}(L) / W(L,S) \cong W(G,L).
\end{equation}
Let $\cT$ be a maximal $F$-torus of $\mc L$ containing $\cS$. The absolute
Weyl group $W(\cG,\cT) = N_\cG (\cT) / \cT$ is endowed with an action of
$\mb W_F$. The relative Weyl group is the restriction of $W (\cG,\cT)^{\mb W_F}$
to $X^* (\cS)$ \cite[\S 15.3]{Spr}. That is,
\begin{equation}\label{eq:3.3}
W(G,S) \cong W(\cG,\cT)^{\mb W_F} / W (Z_\cG (\cS),\cT)^{\mb W_F} . 
\end{equation}
An element of $N_\cG (\cT)$ normalizes $\mc L$ if and only if it stabilizes the
root subsytem $R(\mc L,\cT) \subset R (\cG,\cT)$. Combining \eqref{eq:3.2} and
\eqref{eq:3.3}, we find
\[
W(G,L) \cong \mathrm{Stab}_{W(\cG,\cT)^{\mb W_F}} (R(\mc L,\cT)) \big/ 
W(\mc L,\cT)^{\mb W_F} .
\]
Now we are in a good position to pass to the complex dual groups. Using
the canonical isomorphism 
\[
W(\cG,\cT) \cong W (\cG^\vee,\cT^\vee) = W (G^\vee,T^\vee),
\]
we obtain 
\begin{equation}\label{eq:3.4}
W(G,L) \cong \mathrm{Stab}_{W(G^\vee,T^\vee)^{\mb W_F}} (R(L^\vee,T^\vee)) \big/ 
W(L^\vee,T^\vee)^{\mb W_F} .
\end{equation}
Because $\cT$ is defined over $F$, $T^\vee$ is $\mb W_F$-stable and we can
form $T^\vee \rtimes \mb W_F$. An element of $N_{G^\vee}(T^\vee)$ is fixed
by $\mb W_F$ if and only if it normalizes $T^\vee \rtimes \mb W_F$.

Inside the Langlands dual group $G^\vee \rtimes \mb W_F$ we can rewrite the
right hand side of \eqref{eq:3.4} as
\begin{multline}\label{eq:3.5}
\big( \mathrm{Stab}_{N_{G^\vee}(T^\vee \rtimes \mb W_F)}(R(L^\vee,T^\vee)) / 
T^\vee \big) \Big/ \big( N_{L^\vee}(T^\vee \rtimes \mb W_F) / T^\vee \big) \\
\cong \mathrm{Stab}_{N_{G^\vee}(T^\vee \rtimes \mb W_F)}(R(L^\vee,T^\vee))  
\Big/ \big( N_{L^\vee}(T^\vee \rtimes \mb W_F) .
\end{multline}
A standard argument shows that the canonical injection
\begin{equation}\label{eq:3.6}
\mathrm{Stab}_{N_{G^\vee}(T^\vee \rtimes \mb W_F)}(R(L^\vee,T^\vee))  
\Big/ \big( N_{L^\vee}(T^\vee \rtimes \mb W_F) \big) \to 
N_{G^\vee}(L^\vee \rtimes \mb W_F) / L^\vee
\end{equation}
is surjective. Namely, for $n \in N_{G^\vee}(L^\vee \rtimes \mb W_F)$, 
$n T^\vee n^{-1}$ is a maximal torus of the complex group $L^\vee$, 
so it is conjugate to $T^\vee$ by some $l \in L^\vee$. Then 
\[
l n \in N_{G^\vee}(L^\vee \rtimes \mb W_F) \cap N_{G^\vee}(T^\vee \rtimes \mb W_F) 
= \mathrm{Stab}_{N_{G^\vee}(T^\vee \rtimes \mb W_F)}(R(L^\vee,T^\vee)) .
\]
Hence $W(G,L)$ is canonically isomorphic to the right hand sides of \eqref{eq:3.5}
and \eqref{eq:3.6}.
\end{proof}

Now we have a well-defined action of $W(G,L) \cong N_{G^\vee}(L^\vee \rtimes 
\mb W_F) / L^\vee$ on
\[
\Phi_{\cusp} (L) = \bigsqcup\nolimits_{\fs^\vee_L = [L^\vee,\varphi,\varepsilon]_{L^\vee}}
\Phi_e (L)^{\fs_L^\vee} .
\]
The action preserves this decomposition because it stabilizes the group of
unramified characters $X_\nr (L) \cong H^1_c \Big( \mb W_F / \mb I_F , 
(Z(L^\vee)^{\mb I_F} \big)^\circ \Big)$. Hence we can transfer the definition of
Bernstein's finite group $W_\fs$ to the Galois side. For $\fs^\vee = [L^\vee,
\varphi,\varepsilon]_{G^\vee} \in \Omega^\vee (G)$ we define:
\[
W_{\fs^\vee} = \text{stabilizer of } \Phi_e (L)^{\fs_L^\vee} \text{ in }
N_{G^\vee}(L^\vee \rtimes \mb W_F) / L^\vee.
\]
It is expected (and proved in \cite[Th\'eor\`eme 5.6]{Mou} in the case of split groups of 
classical type) that if $\sigma\in\Irr(L)$ corresponds to $(\varphi,\varepsilon)\in
\Phi_{\cusp}(L)$ via LLC then the groups $W_\fs$ and $W_{\fs^\vee}$ are isomorphic.

\subsection{Langlands parameters and extended quotients} \ 

It is reasonable to expect that the conjectural bijection
\[
\Phi_e (G)^\fs \longleftrightarrow \big( \Phi_e (L)^{\fs_L} \q W_\fs \big)_\natural
\]
from \eqref{eq:3.1} can be constructed purely in terms of Langlands parameters, 
without using $p$-adic groups. Indeed, this was already done for $\GL_n (F)$ in
\cite[\S 1]{BrPl}. Let us give two more examples.\\

\textbf{Example.}
Let $G = \SL_2 (F), G^\vee = \PGL_2 (\C)$ and $L = T \cong F^\times$. 
We record that $W(G^\vee,T^\vee) \cong W_\fs \cong \Z / 2 \Z$, where $\Irr^\fs (G)$ 
is Iwahori--spherical Bernstein component. For $\phi$ we simply take the unit map 
$\mb W_F \times \SL_2 (\C) \to T^\vee \subset G^\vee$. Then $\phi \in \Phi_\cusp (T)$
and $(\phi,\mathrm{sign}_{W_\fs}) \in \big( \Phi_e (T)^{\fs_T} \q W_\fs \big)_2$.

From this we want to construct  $(\tilde \phi,\rho) \in \Phi_e (G)^\fs$. The Springer
correspondence for $W_\fs$ associates to the sign representation the conjugacy class
of the unipotent element $u = \matje{1}{1}{0}{1} \in \PGL_2 (\C)$. We define $\tilde 
\phi$ by 
\[
\tilde \phi \big|_{\mb W_F} = \phi \big|_{\mb W_F} = 1 \quad \text{and} \quad  
\tilde \phi \Big(1,\matje{1}{1}{0}{1}\Big) = u .
\]
For a lack of choice we have to take $\rho = 1$.
Notice that this agrees with the example on page \pageref{casesABPS} and with the 
LLC for $\SL_2 (F)$: both $(\phi,\mathrm{sign}_{W_\fs})$ and $\tilde \phi$ correspond 
to the Steinberg representation. \\

\textbf{Example.}
Let $G = \GL_m (D)$ and let $\chi \in \Irr (\SL_{md}(\C))$ be the character that 
defines $G$ as an inner twist of  $\GL_{md}(F)$ (see page \pageref{prop:Kot}).
Assume that $\phi$ is a Langlands parameter for a supercuspidal representation of a 
standard Levi subgroup $L = \prod_i \GL_{m_i}(F)^{e_i}$ of $G$, of the form
$\prod_i \phi^{e_i}$ with $\phi_i : \mb W_F \times \SL_2 (\C) \to \GL_{m_i d}(\C)$
discrete. 

Since $\cR_\phi = 1$ for all $\phi \in \Phi (\GL_{md}(F))$, and by Lemma
\ref{lem:1.1}, we have $\cZ_\phi = \cS_\phi$ and $(\phi,\chi) \in \Phi_e (L)^{\fs_L^\vee}$.
The stabilizer of $\phi$ in $W_{\fs^\vee}$ is $W_{\fs^\vee,\phi} \cong \prod_i S_{e_i}$. 
Let $\rho \in \Irr (W_{\fs^\vee,\phi})$, so that 
$(\phi,\chi,\rho) \in \big( \Phi_e (L)^{\fs_L^\vee} \q W_\fs \big)_2$. 

To construct an element of $\Phi_e (G)^{\fs^\vee}$ from this we proceed as 
above, only with more data. Via the Springer correspondence for $W_{\fs,\phi}$, $\rho$
determines a unipotent class $[u]$ in $Z_{\GL_{md}(\C)}(\phi) \cong 
\prod_i \GL_{e_i}(\C)$. We put 
\[
\tilde \phi \big|_{\mb W_F} = \phi \big|_{\mb W_F} \quad \text{and} \quad 
\tilde \phi \Big(1,\matje{1}{1}{0}{1}\Big) = u \, \phi \Big(1,\matje{1}{1}{0}{1}\Big) .
\]
Then $(\tilde \phi,\chi) \in \Phi_e (G)^{\fs^\vee}$ and with \cite[Theorem 5.3]{ABPS6} one 
can check that it corresponds to the same $G$-representation as $(\phi,\chi,\rho)$.\\

With all the notions from the previous paragraph we can formulate a Galois version of 
Conjecture \ref{conj:2}, see \cite[\S 5.3]{Mou}.

\begin{conj}\label{conj:4}
Let $L$ be any Levi subgroup of $G$ and let $\fs^\vee = [L^\vee,\varphi,\varepsilon]_{G^\vee} \in
\Omega^\vee (G)$. There exists a family of 2-cocycles $\natural$ and bijections
\[
\begin{array}{ccc}
\Phi_{e,\zeta_G} (G)^{\fs^\vee} & \longleftrightarrow & 
\big( \Phi_{e,\zeta_G} (L)^{\fs^\vee_L} \q W_{\fs^\vee} \big)_\natural , \\
\bigsqcup\nolimits_{\fs^\vee = [L^\vee,\varphi,\varepsilon]_{G^\vee}} 
\Phi_{e,\zeta_G} (G)^{\fs^\vee} 
& \longleftrightarrow & \big( \Phi_{\cusp,\zeta_G} (L) \q W(G,L) \big)_\natural .
\end{array}
\]
Moreover these maps preserve boundedness, and they can be constructed entirely in 
terms of complex reductive groups with $\mb W_F$-actions.
\end{conj}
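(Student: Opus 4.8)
I would realise $\Phi_e(G)^{\fs^\vee}$ as the set of enhanced L-parameters whose \emph{cuspidal support} lies in the inertial class $\fs^\vee$, and then analyse this fibre by means of a generalized Springer correspondence for the complex reductive groups occurring as centralizers inside $G^\vee$. Since every object below is manufactured from $G^\vee \rtimes \mb W_F$, its Levi subgroups and its relative Weyl groups, the requirement that the construction stay purely on the Galois side is then automatic; the model is Moussaoui's treatment of split classical groups \cite{Mou}, which one would try to push to arbitrary $G$. First one must make the cuspidal support map precise. Given $(\phi,\rho) \in \Phi_e(G)$, restrict $\phi$ to $\mb W_F$ and form the (in general disconnected) complex reductive group $M_\phi := Z_{G^\vee}(\phi(\mb W_F))$. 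By Jacobson--Morozov applied to $\phi|_{\SL_2(\C)}$, the remaining datum is the unipotent element $u_\phi = \phi(1,\matje{1}{1}{0}{1}) \in M_\phi$ together with the enhancement $\rho$ of $\cS_\phi = \pi_0(Z_{G^\vee_{sc}}(\phi))$; the pair $(u_\phi,\rho)$ is exactly the input of the generalized Springer correspondence of \cite{Lus5}, extended to disconnected groups as in \cite{Mou}. That correspondence attaches to $(u_\phi,\rho)$ a Levi subgroup of $M_\phi$ carrying a cuspidal local system, well defined up to conjugacy; reassembling it with $\phi|_{\mb W_F}$ into a Levi subgroup $L^\vee \rtimes \mb W_F$ of ${}^L G$ --- using Proposition \ref{prop:3.1} to pass between complex Levi subgroups and $F$-Levi subgroups --- and packaging the cuspidal data with the unramified part into an element $(\psi,\varepsilon) \in \Phi_\cusp(L)$ yields a map $\Phi_e(G) \to \bigsqcup_L \Phi_\cusp(L)/W(G,L)$. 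Showing this is well defined, independently of the chosen $\SL_2$-triple and of representatives, is the first substantial task, carried out for split classical groups in \cite[\S 4.2.2]{Mou}.

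\textbf{Reconstructing the fibre.} Write $T_{\fs^\vee} := \Phi_e(L)^{\fs^\vee_L}$ for the analogue of the Bernstein torus on the Galois side. For a point $t = (\psi_t,\varepsilon_t) \in T_{\fs^\vee}$, the parameters $(\phi,\rho) \in \Phi_e(G)$ with cuspidal support in $W_{\fs^\vee}\cdot t$ and with $\phi|_{\mb W_F}$ fixed in the corresponding conjugacy class correspond --- by the generalized Springer correspondence for $M_\phi$ with the prescribed cuspidal support --- bijectively with $\Irr(\C[W_{\fs^\vee,t},\natural_t])$, where $W_{\fs^\vee,t}$ is the stabiliser of $t$ in $W_{\fs^\vee}$ and $\natural_t$ is the $2$-cocycle produced by the disconnectedness of $M_\phi$. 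Letting $t$ vary and dividing by $W_{\fs^\vee}$ reconstructs the twisted extended quotient $(T_{\fs^\vee} \q W_{\fs^\vee})_\natural$; the compatibility axioms of \S\ref{par:extquot} for the family $\{\natural_t\}$ and for the transition isomorphisms $\phi_{\gamma,t}$ should follow from the functoriality of the generalized Springer correspondence. Boundedness transfers because $\phi$ is bounded exactly when its Frobenius image is compact, and (since the Springer-theoretic unipotent enhancement modifies only the $\SL_2$-part, leaving $\phi'(\Fr_F)$ in the conjugacy class of $\psi'(\Fr_F)$) this occurs precisely when $t$ lies in the unitary subtorus $T_{\fs^\vee,un}$; so the bijection restricts to one between bounded enhanced parameters and the unitary part of the extended quotient.

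\textbf{Gluing.} For the second, global bijection one glues the fibrewise ones over all Levi subgroups $L$. The decomposition $\Phi_\cusp(L) = \bigsqcup_{\fs^\vee_L} \Phi_e(L)^{\fs^\vee_L}$ is stable under $W(G,L) \cong N_{G^\vee}(L^\vee \rtimes \mb W_F)/L^\vee$ by Proposition \ref{prop:3.1}, its $W(G,L)$-orbits correspond to the inertial classes $\fs^\vee$ with that Levi subgroup, and hence $(\Phi_\cusp(L) \q W(G,L))_\natural$ splits as $\bigsqcup_{\fs^\vee}(T_{\fs^\vee} \q W_{\fs^\vee})_\natural$; assembling over $L$ gives $\Phi_e(G) \longleftrightarrow \bigsqcup_L (\Phi_\cusp(L) \q W(G,L))_\natural$. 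Along the way one checks that relevance is preserved, i.e. that the central character $\gamma_\rho$ attached to $\rho|_{\cZ_\phi}$ agrees under cuspidal support with the one attached to $\varepsilon|_{\cZ_\psi}$, in the spirit of Proposition \ref{prop:1.2}; this amounts to the compatibility of the generalized Springer correspondence with central characters.

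\textbf{Main obstacle.} The crux is the generalized Springer correspondence for \emph{disconnected} complex reductive groups --- the typical shape of $Z_{G^\vee}(\phi(\mb W_F))$ --- including the precise determination of the $2$-cocycles $\natural_t$ and the verification that they assemble into an admissible family in the sense of \S\ref{par:extquot}; equivalently, proving that the cuspidal support map is well defined with exactly the predicted fibres. This is the genuinely new ingredient beyond the connected split classical case of \cite{Mou}, and everything else is bookkeeping around it.
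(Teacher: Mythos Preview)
The statement you are addressing is a \emph{conjecture}, not a theorem, and the paper does not supply a proof. What the paper does after stating Conjecture \ref{conj:4} is (i) record that it has been established in two special cases --- split classical groups \cite[Th\'eor\`eme 5.5]{Mou} and principal series of split groups \cite[\S 5, Theorem 8.2]{ABPS5} --- and (ii) observe that the two displayed bijections are actually the same statement, since by the definition of $W_{\fs^\vee}$ the natural map $\bigsqcup_{\fs^\vee}(\Phi_e(L)^{\fs^\vee_L}\q W_{\fs^\vee})_\natural \to (\Phi_\cusp(L)\q W(G,L))_\natural$ is a bijection. There is no argument in the paper beyond that.

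Your proposal is therefore not a comparison target but a genuine research programme, and as such it is well conceived: the route via cuspidal support defined through the generalized Springer correspondence on $Z_{G^\vee}(\phi(\mb W_F))$ is exactly the mechanism behind the known cases and is what the paper gestures toward in \S 3.1--3.2. You have also correctly isolated the real obstruction, namely a generalized Springer correspondence for disconnected reductive groups with control over the 2-cocycles. That said, you should be explicit that what you have written is a \emph{strategy}, not a proof: the well-definedness of the cuspidal support map for general $G$, the precise form of $\natural_t$, and the compatibility of relevance (your appeal to Proposition \ref{prop:1.2}) are all open in the generality of the conjecture, and the paper treats them as such. Your ``gluing'' paragraph, by contrast, is essentially the paper's own remark that the two bijections coincide, and requires nothing new.
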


This conjecture was proven for split classical groups in \cite[Th\'eor\`eme 5.5]{Mou} 
and for principal series Bernstein components of split reductive groups in 
\cite[\S 5 and Theorem 8.2]{ABPS5}.

We note that the two bijections in Conjecture \ref{conj:4} are the same, since by
the definition of $W_{\fs^\vee}$ the canonical map
\[
\bigsqcup\nolimits_{\fs^\vee = [L^\vee,\varphi,\varepsilon]_{G^\vee}} 
\big( \Phi_{e,\zeta_G} (L)^{\fs^\vee_L} \q W_{\fs^\vee} \big)_\natural 
\longrightarrow \big( \Phi_{\cusp,\zeta_G} (L) \q W(G,L) \big)_\natural 
\]
is a bijection. It seems that Conjecture \ref{conj:4} uses the $p$-adic groups
$G$ and $L$, but this is only notational. All the relevant objects are defined
in terms of ${}^L G$, the character $\zeta_G$ of $Z(G^\vee_\sc)$, and the Levi 
subgroup $L^\vee \rtimes \mb W_F \subset {}^L G$.

\section{Topological K-theory}

We discuss the K-theory of the reduced $C^*$-algebra of $G$. Different pictures
of these groups are provided by several conjectures: the Baum--Connes conjecture,
Conjecture \ref{conj:2} and the local Langlands correspondence (although only
in an heuristic way).

\subsection{Equivariant K-theory} \

This paragraph is a counterpart to paragraph \ref{par:extquot}. We work in the
same generality, just with groups acting on nice spaces, and we end up with the
topological K-theory of extended quotients.

Let $X$ be a locally compact Hausdorff space and let $\Gamma$ be a group acting
on $X$. For simplicity we assume that $\Gamma$ is finite. The $\Gamma$-equivariant
K-theory of $X$ was defined in \cite[\S 2.4]{Ati}. When $X$ is compact, 
$K_\Gamma^0 (X)$ is the Grothendieck group of the semigroup of complex 
$\Gamma$-vector bundles on $X$. When $X$ is only locally compact, we let 
$X \cup \{ \infty \}$ be its one-point compactification, and we put
\begin{equation}\label{eq:4.1}
K_\Gamma^0 (X) = \ker \big( K_\Gamma^0 (X \cup \{ \infty \}) \to 
K_\Gamma^0 (\{ \infty \}) \big) .
\end{equation}
The equivariant $K^1$-group is defined via the suspension functor. It can be
expressed as 
\[
K_\Gamma^1 (X) = K_\Gamma^0 (X \times \R) ,
\]
where $\Gamma$ acts trivially on $\R$. Typically one writes
\[
K_\Gamma^* (X) = K_\Gamma^0 (X) \oplus K_\Gamma^1 (X) ,
\]
a $\Z / 2 \Z$-graded abelian group. Let 
\[
C_0 (X) = \{ f \in C (X \cup \{\infty\},\C) : f(\infty) = 0 \} 
\]
be the commutative $C^*$-algebra of functions on $X$ which vanish at infinity.
By the Serre--Swan Theorem its K-theory is
\[
K_* (C_0 (X)) \cong K^* (X) . 
\]
The group $\Gamma$ acts on $C_0 (X)$ by automorphisms, and we form the crossed product
$C_0 (X) \rtimes \Gamma$. Recall from Lemma \ref{lem:B.6} that $\Irr (C_0 (X) \rtimes
\Gamma) \cong (X \q \Gamma )_2$.
By the Green--Julg Theorem \cite{Jul} and the equivariant
Serre--Swan Theorem \cite[2.3.1]{Phi} there is a natural isomorphism
\begin{equation}\label{eq:4.2} 
K_* (C_0 (X) \rtimes \Gamma) \cong K_\Gamma^* (X) . 
\end{equation}
Thus we can interpret $K_\Gamma^* (X)$ as the K-theory of the topological space
$(X \q \Gamma )_2$. Of course that space is usually not Hausdorff, so the statement
is not precise, it is rather a manifestation of the philosophy of noncommutative
geometry.

Now we consider twisted extended quotients. Let $\natural : \Gamma \times \Gamma
\to \C^\times$ be a 2-cocycle. As in \eqref{eq:2.9}, we can find a central
extension
\begin{equation}\label{eq:4.3}
1 \to Z \to \tilde \Gamma \to \Gamma \to 1 , 
\end{equation}
a character $\chi_\natural$ of $Z$ and a minimal idempotent $p_\natural \in \C [Z]$
such that $p_\natural \C [\tilde \Gamma] \cong \C [\Gamma,\natural]$.

The group $\tilde \Gamma$ also acts on $X$, via its projection to $\Gamma$.
Then $C_0 (X) \rtimes p_\natural \C [\tilde \Gamma]$ is a direct summand of
$C_0 (X) \rtimes \tilde \Gamma = C_0 (X) \rtimes \C [\tilde \Gamma]$.
It follows from \eqref{eq:4.2} that
\begin{equation}\label{eq:4.4}
K_* (C_0 (X) \rtimes p_\natural \C [\tilde \Gamma] ) \cong
p_\natural K^*_{\tilde \Gamma}(X) .
\end{equation}
In view of Lemma \ref{lem:B.1}, the left hand side can be regarded as the
K-theory of the topological space $(X \q \Gamma )_\natural$. The right hand side
of \eqref{eq:4.4} also admits a geometric interpretation. We saw in \eqref{eq:4.1}
that $K^0_{\tilde \Gamma}(X)$ is built from $\tilde \Gamma$-vector bundles on $X$.
The central idempotent $p_\natural$ selects the direct summands corresponding to
the $\tilde \Gamma$-vector bundles on which $Z$ acts as $\chi_\natural$. Similarly,
$K^1_{\Gamma}(X)$ can be constructed from the semigroup of $\tilde \Gamma$-vector
bundles on $X \rtimes \R$ on which $Z$ acts as $\chi_\natural$. These semigroups
of vector bundles depend on $X, \Gamma$ and $\natural$, but not on the central
extension $\tilde \Gamma$ chosen to analyse $\natural$. Thus we can define the
$\natural$-twisted $\Gamma$-equivariant $K$-theory of $X$ as 
\[
K^*_{\Gamma,\natural}(X) := p_\natural K^*_{\tilde \Gamma}(X) . 
\]
Then, loosely speaking,
\begin{equation}\label{eq:4.5}
K^* ( (X \q \Gamma)_\natural ) \cong K^*_{\Gamma,\natural}(X) . 
\end{equation}

\subsection{The Baum--Connes conjecture} \

As before, let $G = \cG (F)$ be a reductive $p$-adic group. The reduced $C^*$-algebra
$C_r^* (G)$ is the completion of $\cH (G)$ in the algebra of bounded linear operators
on the Hilbert space $L^2 (G)$. It follows from the work of Harish--Chandra 
(see \cite[\S 10]{Vig}) that the irreducible representations of $C_r^* (G)$ can be
identified with those of the Schwartz algebra of $G$. By \cite[\S III.7]{Wal} the
latter are the same as irreducible tempered $G$-representations. Thus we get
\begin{equation}\label{eq:4.7}
\Irr (C_r^* (G)) = \Irr_\temp (G) ,
\end{equation}
which means that $C_r^* (G)$ is the correct $C^*$-algebra to study the noncommutative
geometry of the tempered dual of $G$.
The structure of $C_r^* (G)$ was described by means of the Fourier transform in
\cite{Ply}.

The Baum--Connes conjecture provides a picture of the $K$-theory of this $C^*$-algebra
in geometric terms. Let $\cB (G)$ be the (nonreduced) affine building of $G$, as
developed by Bruhat and Tits \cite{BrTi1,BrTi2}.
This is a proper $G$-space with many remarkable properties, for example:
\begin{itemize}
\item $\cB (G)$ satisfies the negative curvature inequality \cite[2.3]{Tit} and
hence is contractible and has unique geodesics \cite[\S VI.3]{Bro};
\item every compact subgroup of $G$ fixes a point of $\cB (G)$, see 
\cite[\S 2.3.1]{Tit} or \cite[\S VI.4]{Bro}.
\end{itemize}
In view of \cite[Proposition 1.8]{BCH}, these properties make $\cB (G)$ into a
universal space for proper $G$-actions \cite[Definition 1.6]{BCH}. 

The $G$-equivariant K-homology $K_*^G (\cB (G))$ of the building was defined in
\cite[\S 3]{BCH}. The Baum--Connes conjecture asserts that the canonical assembly map
\begin{equation}\label{eq:4.6}
K_*^G (\cB (G)) \to K_* (C_r^* (G)) 
\end{equation}
is an isomorphism. This was proven (for a large class of groups containing
$G$) in \cite{Laf1}. For the groups under consideration the Baum--Connes conjecture
can also be formulated and proven more algebraically \cite{HiNi,Schn}, with 
equivariant cosheaf homology (also known as chamber homology) \cite[\S 2]{ABP}.
By \cite{Sol1} these two versions of the conjecture are compatible.

The left-hand-side of \eqref{eq:4.6}, defined in terms of $K$-cycles, has never been 
directly computed for a noncommutative reductive $p$-adic group. Results of Voigt 
\cite{Voi} allow us to replace the left-hand-side with the 
chamber homology groups. Chamber homology has been directly computed for only two 
noncommutative $p$-adic groups:  $\SL_2 (F)$ \cite{BHP1} and $\GL_3 (F)$ \cite{AHP}.   
In the case of $\GL_3 (F)$, one can be sure that representative cycles in all the 
homology groups have been constructed only by checking with the right-hand-side 
of the Baum--Connes conjecture.
In other words, one always has to have an independent computation of the right-hand-side.  

On the $C^*$-algebra of \eqref{eq:4.6} side our earlier conjectures have something to say. 
The Bernstein decomposition of $\cH (G)$ \eqref{eq:2.10} gives rise to a factorization
\[
C_r^* (G) = \prod\nolimits_{\fs \in \Omega (G)} C_r^* (G)^\fs  \quad
\text{with} \quad \Irr (C_r^* (G)^\fs) = \Irr_\temp (G)^\fs .
\]
Morally speaking, $K_* (C_r^* (G)^\fs)$ is the K-theory of the topological
space $\Irr_\temp (G)^\fs$. Combining this with Conjecture \ref{conj:2} and 
\eqref{eq:4.5} leads to:

\begin{conj}\label{conj:5}
Let $\fs \in \Omega (G)$. There exists a canonical isomorphism
\[
K^*_{W_\fs,\natural}(T_{\fs,un}) \to K_* (C_r^* (G)^\fs) . 
\]
\end{conj}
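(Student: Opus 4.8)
The strategy runs parallel to the verification of Conjecture~\ref{conj:2} in the
cases listed on page~\pageref{casesABPS}: one reduces the statement to a computation
with Schwartz completions of affine Hecke algebras, moving freely between the
K-theory of $C^*$-algebras and that of their natural dense subalgebras. Note that the
bijection $\Irr_\temp(G)^\fs \leftrightarrow (T_{\fs,un}\q W_\fs)_\natural$ supplied
by Conjecture~\ref{conj:2} is not by itself sufficient: the spectrum of
$C_r^*(G)^\fs$ is non-Hausdorff, so it does not determine the K-theory, and one
genuinely needs the algebra structure. Conjecture~\ref{conj:5} should be viewed as a
refinement of Conjecture~\ref{conj:2} at the level of K-theory.

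First I would replace $C_r^*(G)^\fs$ by the corresponding direct summand $\cS(G)^\fs$
of the Harish--Chandra Schwartz algebra. Since $\cS(G)$ is a dense subalgebra of
$C_r^*(G)$ that is closed under holomorphic functional calculus (see \cite{Wal,Vig})
and the inclusion respects the Bernstein decomposition, the inclusion induces an
isomorphism $K_*(C_r^*(G)^\fs)\cong K_*(\cS(G)^\fs)$. Granting Conjecture~\ref{conj:3},
the nice idempotent $e_\fs$ lies in $\cS(G)$; one then checks a Schwartz-algebra
analogue of Bushnell--Kutzko theory \cite{BuKu1,BuKu2}, namely that
$\cS(G)^\fs=\cS(G)e_\fs\cS(G)$ is Morita equivalent to $e_\fs\cS(G)e_\fs$, and that the
latter is the Schwartz completion, inside $e_\fs C_r^*(G)e_\fs$, of
\[
e_\fs\cH(G)e_\fs\;\cong\;\Big(\cH(X_\nr(L),R_\fs,q_\fs)\otimes\End_\C(V_\fs)\Big)^{X_\nr(L)}\rtimes\cR_\fs .
\]
This is unconditional whenever the Hecke-algebra structure is already known (classical
groups, inner forms of $\GL_n$ and of $\SL_n$); in general it is the Schwartz-algebra
incarnation of Conjecture~\ref{conj:3}.

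The heart of the argument is Opdam's scaling. The family $\sigma_\epsilon$ of algebra
maps deforms the parameters $q_\fs\mapsto q_\fs^{\,\epsilon}$ continuously down to
$\epsilon=0$, where the affine Hecke algebra degenerates to
$\mc O(X_\nr(L))\rtimes W(R_\fs)$ and its Schwartz completion to
$C^\infty(X_\unr(L))\rtimes W(R_\fs)$; by \cite{Opd,Sol2} this deformation induces an
isomorphism on topological K-theory, and one checks it can be taken
$\cR_\fs$-equivariant and compatible with the constant factor $\End_\C(V_\fs)$ and
with the $X_\nr(L)$-action, hence descends to the invariants and the crossed product.
Unwinding the remaining Morita equivalences and the $X_\nr(L)$-invariants, the
$\epsilon=0$ algebra is Morita equivalent to
$C^\infty(T_{\fs,un})\rtimes\C[W_\fs,\natural]$, with $\natural$ the 2-cocycle of
$W_\fs$ of Conjecture~\ref{conj:2}: the projective representation $V_\fs$ together
with $\cR_\fs$ produces $\natural$, while the scaling promotes the $W(R_\fs)$-action
encoded in the Hecke algebra to an honest crossed-product factor, so that the full
$W_\fs=W(R_\fs)\rtimes\cR_\fs$ appears. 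Finally, choosing a central extension
$1\to Z\to\tilde W_\fs\to W_\fs\to 1$ and the idempotent $p_\natural$ as in
\eqref{eq:2.9}, and using that $C^\infty(T_{\fs,un})$ is holomorphically closed in
$C(T_{\fs,un})$, the Green--Julg and equivariant Serre--Swan identifications
\eqref{eq:4.2}--\eqref{eq:4.5} yield
\[
K_*\big(C^\infty(T_{\fs,un})\rtimes\C[W_\fs,\natural]\big)\;\cong\;p_\natural K^*_{\tilde W_\fs}(T_{\fs,un})\;=\;K^*_{W_\fs,\natural}(T_{\fs,un}),
\]
which is the asserted isomorphism.

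The main obstacle is this second step in full generality. Beyond its dependence on
Conjecture~\ref{conj:3}, the genuinely hard analytic point is the Schwartz/$C^*$
refinement of type theory: showing that $e_\fs\cS(G)e_\fs$ really is the Schwartz
completion of the conjectural Hecke algebra, not merely that $e_\fs\cH(G)e_\fs$ has
the right shape. Closely tied to this is the bookkeeping of the 2-cocycle: one must
follow $\natural$ through every Morita equivalence, through the $X_\nr(L)$-invariants,
and through the scaling, and check that the class obtained on the $C^*$-side is
exactly the one appearing in Conjecture~\ref{conj:2}. Finally, \emph{canonicity} ---
independence of the chosen type, of the base point in $T_\fs$, and of the intertwining
operators --- must be established; it should follow from the naturality of the maps
\eqref{eq:4.2}--\eqref{eq:4.5} and from the fact that Opdam's scaling, although not
unique, acts trivially on K-theory.
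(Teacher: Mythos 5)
The statement you were asked to prove is a \emph{conjecture}, and the paper does not prove it: the most the paper offers is (i) a verification of the Iwahori--spherical case \emph{modulo torsion}, by combining the isomorphism \eqref{eq:2.11} with \cite[Theorem 5.1.4]{Sol2} and \eqref{eq:4.2}, and (ii) a one-sentence sketch that, assuming Conjecture~\ref{conj:3}, one could use Opdam's $C^*$-completions of affine Hecke algebras together with the techniques of \cite[\S 5.1]{Sol1} and \cite[\S 6]{ABPS6}. Your proposal fleshes out precisely this sketched strategy --- Schwartz vs.\ $C^*$ comparison, Morita reduction via Conjecture~\ref{conj:3}, Opdam's parameter scaling $q_\fs\mapsto q_\fs^\epsilon$, then Green--Julg/Serre--Swan --- so it is on the same route as the authors intend, not a different one.

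The genuine gap is that you claim the scaling deformation ``induces an isomorphism on topological K-theory'' and then deduce an integral isomorphism $K^*_{W_\fs,\natural}(T_{\fs,un})\cong K_*(C_r^*(G)^\fs)$. The references you (and the paper) invoke only give this \emph{rationally}: \cite[Theorem 5.1.4]{Sol2} is a statement about $K_*\otimes_\Z\Q$, and indeed the paper is careful to say that even in the Iwahori-spherical case it establishes \eqref{KKK} only ``modulo torsion.'' Promoting this to an integral isomorphism is exactly the open content of Conjecture~\ref{conj:5} beyond what is known, and your argument does not supply the missing input. A secondary issue you flag but do not resolve is canonicity: you assert that Opdam's scaling ``acts trivially on K-theory'' although it is not unique, but this is an unproved claim, and without it the asserted isomorphism is not shown to be canonical --- which is part of the conjecture's statement. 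In short: your sketch is faithful to the authors' intended strategy, but it overstates what the cited results deliver (rational, not integral, K-theory invariance) and leaves canonicity as an assertion rather than a proof.
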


This is the topological K-theory version of Conjecture \ref{conj:2}. Of course
it is much weaker, since it only says something about the cohomology of
$(T_\fs \q W_\fs )_\natural$, and not so much about the space itself. Yet in
practice, with some additional knowledge of the underlying algebras, this
already provides a lot of information. Conjecture \ref{conj:5} provides a much 
finer and more precise formula for $K_*(C^*_r(G))$ than Baum--Connes alone. 

Let us consider the reduced Iwahori-spherical $C^*$-algebra  $C_r^*(G)^{\mathfrak{i}} 
\subset C^*_r(G)$ in more detail. The primitive ideal spectrum of $C^*_r(G)^{\mathfrak{i}}$ 
can be identified with the irreducible tempered representations of $G$ which admit nonzero 
Iwahori-fixed vectors. We assume that $G$ is split, so $\mf i = [T,1]_G$ and 
$T_{\mf i} = T^\vee$ is a maximal torus in the complex dual group $G^\vee$.
In this special case,  Conjecture \ref{conj:5} asserts that 
\begin{align}\label{KKK}
K_j(C_r^*(G)^{\mathfrak{i}}) \cong K^j_{W_{\mf i}}(T^\vee_{un}) 
\end{align}
with $j = 0,1$. Here $K^j_{W_{\mf i}}(T^\vee_{un})$ is the classical topological equivariant 
K-theory for the Weyl group $W_{\mf i} \cong W (G^\vee ,T^\vee)$ acting on the compact torus 
$T^\vee_{un} = T_{\mathfrak{i}, un}$.

Let $X^*(T^\vee_{un})$ denote the group of Lie group morphisms from $X^*(T^\vee_{un})$ to 
U$(1)$, that is, $X^*(T^\vee_{un})$ denotes the Pontryagin dual of $T^\vee_{un}$. 
It is naturally isomorphic with the lattice of algebraic characters of $T^\vee$. We have 
\begin{align*}
C^*_r (X^*(T^\vee_{un}) \rtimes W_{\mf i}) \cong C(T^\vee_{un}) \rtimes W_{\mf i}
\end{align*}
by a standard Fourier transform.
By \eqref{eq:2.11} and \cite[Theorem 5.1.4]{Sol2}
\begin{align*}
K_j (C_r^*(G)^{\mathfrak{i}}) \otimes_{\Z}\Q \cong
K_j ( C^*_r (X^* (T^\vee_{un}) \rtimes W_{\mf i}))\otimes_{\Z}\Q ,
\end{align*}
where $j = 0,1$. With \eqref{eq:4.2} we get
\begin{align}\label{Sol}
K_j (C_r^*(G)^{\mathfrak{i}})\otimes_{\Z}\Q \cong 
K^j_{W_{\mf i}} (T^\vee_{un})\otimes_{\Z}\Q ,
\end{align}
which establishes \eqref{KKK} modulo torsion.\\

In general, if Conjecture \ref{conj:3} would hold for $\fs$, then $C_r^* (G)^\fs$
would be Morita equivalent with $e_\fs C_r^* (G) e_\fs$, and that algebra could
be described in terms of $C^*$-completions of affine Hecke algebras \cite{Opd}.
With the techniques developed in \cite[\S 5.1]{Sol1} and \cite[\S 6]{ABPS6} that
would go a long way towards Conjecture \ref{conj:5}.

Now two pictures of $K_* (C_r^* (G))$ are available, namely $K_*^G (\cB (G))$ and\\ 
$\bigoplus_{\fs \in \Omega (G)} K^*_{W_\fs,\natural}(T_{\fs,un})$. 
Unfortunately they are not compatible in any obvious way. It is even unclear
how a Bernstein decomposition of $K_*^G (\cB (G))$ would look like, see 
\cite[\S 5]{BHP} for a discussion of the analogous problem in chamber homology.

We sketch how some comparisons can be made. Let $S$ be a maximal $F$-split torus
of $G$ and let $A_S = X_* (S) \otimes_\Z \R$ be the corresponding apartment of
$\cB (G)$. It is endowed with an action of 
\[
W^e (G,S) := N_G (S) / Z_G (S)_\cpt \cong Z_G (S) / Z_G (S)_\cpt \rtimes W(G,S) ,
\]
a group which contains $X_* (S) \rtimes W(G,S)$ as a subgroup of finite index.
More generally, for any Levi subgroup $L \subset G$ the group $W^e (L,S)$ acts
on $A_S$, and $A_S$ is a universal example for proper $W^e (L,S)$-actions.

Let $\fs = [L,\omega]_G$ and regard $\omega$ as the basepoint of $T_{\fs,un}$. 
Via \eqref{eq:2.1} this turns $T_{\fs,un}$ into a Lie group, so we can speak
of its characters. Then
\begin{equation}\label{eq:4.8}
K^*_{W_\fs,\natural}(T_{\fs,un}) \cong 
K_* \big( C (T_{\fs,un}) \rtimes \C [W_\fs,\natural] \big) \cong
K_* \Big( C_r^* (X^* (T_{\fs,un})) \rtimes \C [W_\fs,\natural] \Big) .
\end{equation}
Choosing a central extension as in \eqref{eq:4.3}, we can rewrite the right
hand side of \eqref{eq:4.8} as 
\[
K_* \Big( p_\natural C_r^* \big( X^* (T_{\fs,un}) \rtimes 
\tilde{W_\fs}\big) \Big) = p_\natural K_* \Big( C_r^* \big( X^* (T_{\fs,un}) 
\rtimes \tilde{W_\fs}\big) \Big) . 
\]
It follows from \cite[\S V.2.6]{Ren} that the character lattice $X^* (T_{\fs,un}) 
= X^* (T_\fs)$ is naturally isomorphic to a cocompact subgroup of 
\[
A_{Z(L)} = \big( Z(L) / Z(L)_\cpt \big) \otimes_\Z \R ,
\]
So the Baum--Connes conjecture for $X^* (T_{\fs,un}) \rtimes \tilde{W_\fs}$ 
gives isomorphisms
\begin{equation}\label{eq:4.9}
K^*_{W_\fs,\natural}(T_{\fs,un}) \cong p_\natural K_* \Big( C_r^* 
\big( X^* (T_{\fs,un}) \rtimes \tilde{W_\fs}\big) \Big) \cong
p_\natural K_*^{X^* (T_{\fs,un}) \rtimes \tilde{W_\fs}} (A_{Z(L)}) .
\end{equation}
The canonical embedding $A_{Z(L)} \to A_S \subset \cB (G)$ should identify
the isotropy groups for the action of $X^* (T_{\fs,un}) \rtimes W_\fs$ acting
on $A_{Z(L)}$ with subquotients of the isotropy groups of $G$ acting on 
$\cB (G)$. Via \eqref{eq:4.9} that should give a map
\[
K^*_{W_\fs,\natural}(T_{\fs,un}) \to K_*^G (\cB (G)). 
\]
Of course this construction is too simple, because it does not take the 
$L$-representation $\omega$ into account. Yet at least it gives us a geometric
idea of how the two pictures of $K_* (C_r^* (G))$ can be related. Probably a
good map from $K_* (C_r^* (G)^\fs)$ to $K_*^G (\cB (G))$ will have to
involve a nice idempotent for $\fs$. But, even with a $\fs$-type available
the issue is currently unclear. 

\subsection{Relations with the LLC} \

Recall that Conjecture \ref{conj:1} predicts a bijection
\[
\Irr_\temp (G) \longleftrightarrow \Phi_{e,\zeta_G,\bdd}(G) , 
\]
where the subscript ``bdd'' indicates bounded L-parameters. By \eqref{eq:4.7}
$K_* (C_r^* (G))$ can be regarded as the K-theory of $\Irr_\temp (G)$. It is not 
so clear what the (topological) K-theory of $\Phi_{e,\zeta_G,\bdd}(G)$ should be, 
because there is no convenient algebra in sight. Here Conjecture \ref{conj:4} is 
useful. As it respects boundedness of L-parameters, it predicts a bijection
\[
\Phi_{e,\zeta_G,\bdd}(G) \longleftrightarrow 
\bigsqcup_{\fs^\vee = [L^\vee,\varphi,\varepsilon]_{G^\vee} \in \Omega^\vee (G)} 
\big( \Phi_{e,\zeta_G,\bdd}(L)^{\fs^\vee_L} \q W_{\fs^\vee} \big)_\natural .
\]
The K-theory of the right hand side can be interpreted with \eqref{eq:4.5}.
The LLC for $\Irr_\cusp (L)$ should provide a $W(G,L)$-equivariant bijection
\[
\Irr_{\cusp,\temp}(L) \longleftrightarrow \Phi_{\cusp,\zeta_G,\bdd}(L) , 
\]
which induces a group isomorphism $W_\fs \cong W_{\fs^\vee}$ if $T_\fs$ 
corresponds to $\Phi_{e,\zeta_G} (L)^{\fs^\vee_L}$. 

Let us combine all these descriptions of $K_* (C_r^* (G))$ in one diagram:
\[
\xymatrix{ 
K_* (C_r^* (G)) \ar@{=}[d] \ar@{<->}[rr]^{\text{Baum--Connes}} & & 
K_*^G (\cB (G)) \ar@{<.>}[d] \\ 
K_* (C_r^* (G)) \ar@{<->}[rr]^{\text{Conjecture \ref{conj:5}}} \ar@{<->}[dd] & & 
\bigoplus_{\fs \in \Omega (G)} K^*_{W_\fs,\natural}(T_{\fs,un}) 
\ar@{<->}[d]^{\text{cuspidal LLC}} \\
& & \bigoplus_{\fs^\vee = [L^\vee,\varphi,\varepsilon]_{G^\vee} \in 
\Omega^\vee (G)} K^*_{W_{\fs^\vee},\natural} 
\Big( \Phi_{e,\zeta_G,\bdd}(L)^{\fs^\vee_L} \Big) 
\ar@{<->}[d]^{\text{Conjecture \ref{conj:4}}} \\
``K^* (\Irr_\temp (G))" \ar@{<->}[rr]^{``K^* (\text{LLC})"} & &
``K^* (\Phi_{e,\zeta_G,\bdd}(G))"
}\]
On the top of the right hand side we have the ``$p$-adic'' geometry of the
Bruhat--Tits building of $G$, combined with the noncommutative geometry from
equivariant K-homology. At the bottom we find, in some sense, the cohomology of
the space of enhanced bounded L-parameters for $G$. The extended quotients 
obtained from the Bernstein decomposition for $G$ interpolate between these very
different settings. In this way our Conjectures \ref{conj:4} and \ref{conj:5}
connect the Baum--Connes conjecture and the local Langlands
correspondence.

\end{document}